\documentclass[11pt]{amsart}
\usepackage{latexsym, amsmath, amssymb, longtable, booktabs,amscd, color}

\usepackage{bbm}

\usepackage[english]{babel}
\usepackage[utf8]{inputenc}

\usepackage[all]{xy}
\usepackage[symbol]{footmisc}

\usepackage{tikz-cd}
\usepackage[hidelinks]{hyperref}

\numberwithin{equation}{section}
\newtheorem{thm}{Theorem}[section]
\newtheorem{theorem}[thm]{Theorem}
\newtheorem{cor}[thm]{Corollary}

\newtheorem{prop}[thm]{Proposition}
\newtheorem{lemma}[thm]{Lemma}
\theoremstyle{definition}
\newtheorem{definition}[thm]{Definition}
\theoremstyle{remark}
\newtheorem{rk}[thm]{Remark}

\theoremstyle{definition}

\theoremstyle{remark}

\theoremstyle{remark}

\newcommand{\h}{\mathcal{H}}
\newcommand{\z}{\mathbb{Z}_{\geq 0}}
\newcommand{\p}{\{0,1,...,p-1\}}
				
\title[]{Lucas' Theorem modulo $p^n$ via Harmonic sums}
\author{Suneel Kumar}

\address{Department of Mathematics, Indian Institute of Technology Kanpur, U.P. 208016, India}
\email{suneelm145@gmail.com}
 \thanks{MSC: 11B65, 11A07, 11B50, 11B68. Keywords: Binomial coefficient, Lucas' theorem, Harmonic sum, Wolstenholme prime.}

\begin{document}

\maketitle

\begin{abstract}
Lucas' theorem states that for a prime $p$, integers $a,b\geq0$ and $0\leq r,s\leq p-1$, one has $\binom{pa+r}{pb+s}\equiv\binom{a}{b}\binom{r}{s}\bmod p$. Let $D(p^n)$ be the set of pairs $(r,s)$ with $s\leq r$ for which this congruence holds modulo $p^n$ for all $a,b\geq0$. Rowland characterized $D(p^2)$ in terms of harmonic sums and symmetries of Pascal's triangle. For $n<p$, we give a criterion for a point $(r,s)$ to lie in $D(p^n)$ in terms of generalized harmonic sums of order at most $n-1$. We show that $D(p^n)$ is invariant under the reflection $(r,s)\mapsto(r,r-s)$ and the rotation $(r,s)\mapsto(p-1-s,r-s)$, so that with every point it contains its full orbit under the dihedral group $D_3$. Applied to the vertices, our criterion gives a result of Jacobsthal--Ljunggren type modulo $p^n$. In particular, we recover Zhao's result $(0,0)\in D(p^4)$ if and only if $p$ is a Wolstenholme prime. We also study the centroid and the edge midpoints, whose membership in $D(p^3)$ and $D(p^4)$ is governed by Bernoulli numbers and polynomials, Wolstenholme primes and super Wieferich primes. In particular, for $p=205129$ the centroid lies in $D(p^3)$, answering a question of Rowland. 
\end{abstract}

\section{Introduction}\label{intro}
Lucas' theorem is a classical tool for computing binomial coefficients modulo a prime. In $1878$, Lucas \cite{lucas} proved that for any prime $p$ and $r, s\in\p$,
$${pa+r\choose pb+s}\equiv {a\choose b}{r\choose s}\bmod p\quad\text{for all integers }a,b\geq 0.$$
In general, the above congruence fails modulo higher powers of $p$, and a natural question is for which $(r,s)$ it still holds. We denote by $D(p^n)$ the set of $(r,s)$ with $r, s\in\p$ such that
$${pa+r\choose pb+s}\equiv {a\choose b}{r\choose s}\bmod p^n\quad\text{for all integers }a,b\geq 0.$$
By Lucas' theorem, $(r,s)\in D(p)$ for all $r,s\in \p$. Rowland \cite{rowland} gave an explicit criterion for $(r,s)$ to lie in $D(p^2)$ in terms of harmonic sums and in terms of symmetries of Pascal's triangle (Rowland denotes the set $D(p^2)$ by $D(p)$). In this article, we give a criterion for a point $(r,s)$ to lie in $D(p^n)$ in terms of generalized harmonic sums of order at most $n-1$. We also study points such as the vertices, the centroid and the edge midpoints, as well as the size of $D(p^n)$, and in particular we answer a question of Rowland \cite{rowland} about $D(p^3)$. Throughout, we assume that $p\geq 5$ and $r,s\in \p$ with $r\geq s$, and do not repeat these assumptions in each statement. For $n\geq2$, the assumption $r\geq s$ is not a restriction, since no pair with $s>r$ lies in $D(p^n)$. This can be seen by taking $a=1$ and $b=0$: the right-hand side is ${1\choose 0}{r\choose s}=0$ as $s>r$, whereas ${p+r\choose s}\not\equiv 0\bmod p^2$,
because the factor $p$ occurs exactly once in $(p+r)!$ and not at all in $s!\,(p+r-s)!$, as $s,\,p+r-s<p$.

Extensive work has been carried out on Lucas' theorem and its generalizations. We refer to {Me\v strovi\'c} \cite{Romeo} for an excellent survey. Binomial coefficients modulo prime powers have been of interest since the early nineteenth century. Even before Lucas' theorem, Babbage \cite{babbage} proved in $1819$ that for every prime $p\geq 3$
\begin{align}\label{babbage_con}
    {2p-1\choose p-1}\equiv 1\bmod p^2.
\end{align}
In $1862$, Wolstenholme \cite{Wolstenholme} proved that for every prime $p\geq 5$ the above congruence also holds modulo $p^3$. In $1949$ Jacobsthal and Ljunggren \cite{Lj} proved the following congruence
\begin{align}\label{eqn Jl type intro}
    {pa\choose pb}\equiv {a\choose b}\bmod p^3\quad\text{for all } a, b\geq 0\quad\text{and }p\geq 5.
\end{align}
This congruence shows that $(0,0)\in D(p^3)$ for $p\geq 5$. Several authors have studied analogues of Lucas' theorem modulo higher powers of $p$. In $1990$, Bailey \cite{Bailey} proved that for $r,s\in\p$,
$${p^2a+r\choose p^2b+s}\equiv {a\choose b}{r\choose s}\bmod p^2,$$
and that the same congruence holds with $p^2$ replaced by $p^3$ when $p\geq 5$. In the same year, Davis and Webb \cite{davis-webb} obtained a Lucas-type theorem for prime powers, and independently Granville \cite{Granville} obtained a similar result. Davis and Webb \cite{Davis-Webb1993} extended Bailey's congruence to every modulus $p^k$ by showing that
$${p^ka+r\choose p^kb+s}\equiv {p^{\lfloor (k-1)/3\rfloor}a\choose p^{\lfloor (k-1)/3\rfloor}b}{r\choose s}\bmod p^k\quad\text{for } p\geq 5 \text{ and } k\geq 1.$$
Zhao \cite[Theorem 3.5]{Zhao} in $2007$ extended Bailey's congruence modulo $p^5$ for $p\geq 7$. In the same paper \cite[Theorem 3.2]{Zhao}, Zhao also extended the Jacobsthal-Ljunggren congruence and proved that
$${pa\choose pb}\equiv {a\choose b}\bmod p^4\quad\text{for all } a,b\geq 0$$
if and only if $p$ is a Wolstenholme prime, that is, a prime $p$ for which \eqref{babbage_con} holds modulo $p^4$. In our notation, this says that $(0,0)\in D(p^4)$ if and only if $p$ is a Wolstenholme prime.

Next, we define the notations required to state our main results. Let $\mathbb{Z}_p$ denote the ring of $p$-adic integers. For $m\in\{1,\ldots,p-1\}$, we define
$$\alpha(x,m) =\prod_{i = 1}^{m}\left(1 + \frac{px}{i}\right)\in\mathbb{Z}_p[x],$$
and by convention, we set $\alpha(x,0) = 1$. For $f,g\in\mathbb{Z}_p[x]$, we write $f\equiv g\bmod p^n$ if all coefficients of $f-g$ are divisible by $p^n$. With these notations, we have the following criterion. 
\begin{theorem}\label{alpha_crit_n}
    Let $r,s\in\p$ with $r\geq s$ and assume $n < p$. Then $(r,s)\in D(p^n)$ if and only if
    \begin{enumerate}
        \item $\alpha(x,r)\equiv \alpha(x, s)\equiv \alpha(x,r-s)\bmod p^n$; and
        \item $\alpha(x,p-1)\,\alpha(x+1,r)\equiv \alpha(1,r)\,\alpha(x,r)\bmod p^n$.
    \end{enumerate}
\end{theorem}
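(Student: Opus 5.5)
We factor the binomial coefficient as a product of four terms, each a polynomial in $a,b$ of the shape $\alpha(x,m)$ evaluated at integers, so the congruence for all $a,b\ge 0$ becomes a statement about polynomials. The starting point is the elementary factorization, valid for $0\le s\le r\le p-1$:
\[
\binom{pa+r}{pb+s}=\binom{pa}{pb}\cdot\frac{\prod_{i=1}^{r}(pa+i)}{\prod_{i=1}^{s}(pb+i)\prod_{i=1}^{r-s}(p(a-b)+i)}
=\binom{pa}{pb}\binom{r}{s}\frac{\alpha(a,r)}{\alpha(b,s)\,\alpha(a-b,r-s)}.
\]
We obtain it by splitting $(pa+r)!$, $(pb+s)!$ and $(p(a-b)+r-s)!$ into their top blocks over $(pa)!$, $(pb)!$ and $(p(a-b))!$. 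The case $r<s$ does not arise, and $a<b$ is trivial since both sides vanish. Similarly
\[
\binom{pa}{pb}=\binom{a}{b}\,\frac{\prod_{j<a}\alpha(j,p-1)}{\prod_{j<b}\alpha(j,p-1)\prod_{j<a-b}\alpha(j,p-1)},
\]
because the non-multiples of $p$ in $(pa)!$ contribute $\prod_{j=0}^{a-1}(p-1)!\,\alpha(j,p-1)$. Since each $\alpha$ value is a $p$-adic unit, $(r,s)\in D(p^n)$ is equivalent to
\[
F(a)\,\alpha(a,r)\equiv F(b)F(a-b)\,\alpha(b,s)\,\alpha(a-b,r-s)\pmod{p^n}\quad\text{for all } a\ge b\ge 0,
\]
where $F(a)=\prod_{j<a}\alpha(j,p-1)$. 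The cancellation by $\binom{r}{s}\binom ab$ is harmless, since $\binom rs$ is a unit; when $\binom ab\equiv0$ we should instead work with the unreduced identity, which reduces to the same unit condition via the $p$-adic factorization $\binom{pa+r}{pb+s}/\binom ab$.

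Next we translate this into polynomial identities. Put $G_m(x)=\alpha(x,m)$, so that $G_m(x)=\sum_k p^k e_k x^k$ with $e_k$ the elementary symmetric functions of $1,1/2,\dots,1/m$. The map $G\mapsto G(x)$ modulo $p^n$ is determined by coefficients of $x^k$ for $k<n$ scaled by $p^k$. Because $n<p$, the $k!$ denominators in binomial-basis expansions are units. Hence a polynomial $H\in\mathbb Z_p[x]$ of the form $\sum p^k c_kx^k$ vanishes mod $p^n$ on all nonnegative integers iff $H\equiv0$ coefficientwise mod $p^n$; this is the key place where $n<p$ is used, via finite differences. Writing $u=a-b$, the condition becomes a two-variable statement
\[
\Phi(b,u):=\frac{F(b+u)}{F(b)F(u)}\cdot\frac{\alpha(b+u,r)}{\alpha(b,s)\alpha(u,r-s)}\equiv1 .
\]

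For necessity, we specialize. Setting $b=0$ gives $\alpha(u,r)\equiv\alpha(u,r-s)$ for all $u$, and setting $u=0$ gives $\alpha(b,r)\equiv\alpha(b,s)$; by the polynomial lemma these yield (1). Setting $u=1$ and using $F(b+1)/F(b)=\alpha(b,p-1)$ and $F(1)=1$, we get $\alpha(b,p-1)\alpha(b+1,r)\equiv\alpha(b,s)\alpha(1,r-s)$. With (1) this becomes $\alpha(b,p-1)\alpha(b+1,r)\equiv\alpha(1,r)\alpha(b,r)$, which is (2).

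For sufficiency, we induct on $u$. Condition (2), rewritten with (1), says that $\Psi(x)=\alpha(x,r)/F(x)$-type ratios satisfy $\alpha(x+1,r)/\alpha(x,r)\equiv\alpha(1,r)/\alpha(x,p-1)$. Multiplying these telescopically gives
\[
\alpha(b+u,r)\equiv\alpha(b,r)\,\alpha(1,r)^u\,\frac{F(b)}{F(b+u)}\cdot\prod_{j<u}\frac{F(b+j+1)/F(b+j)}{\alpha(b+j,p-1)}\cdot(\dots),
\]
and more cleanly $\alpha(b+u,r)F(b+u)\equiv\alpha(b,r)F(b)\,\alpha(1,r)^u$. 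Taking $b=0$ gives $\alpha(u,r)F(u)\equiv\alpha(1,r)^u$, so $\alpha(b+u,r)F(b+u)\equiv\alpha(b,r)F(b)\,\alpha(u,r)F(u)$. Combined with (1) this is exactly $\Phi\equiv1$. The main obstacle is justifying that "mod $p^n$" passes cleanly between polynomial identities and integer evaluations, and division by units, inside the $p$-adically scaled ring $\{\sum p^kc_kx^k\}$. The $b$-specialization in necessity requires exactly the lemma that vanishing at all integers forces coefficientwise vanishing when $n<p$, and this is where the hypothesis enters.
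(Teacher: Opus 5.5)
\textbf{Verdict: there is a gap, but it is localized and easy to repair.} Your overall route is the paper's:
\begin{itemize}
\item the same factorization of $\binom{pa+r}{pb+s}$ as $\binom ab\binom rs$ times a ratio of $\alpha$-values;
\item the same specializations $b=0$, $a=b$, $a=b+1$ for necessity;
\item the same telescoping for sufficiency. With $G(j)=F(j)\alpha(j,r)$, condition (2) gives $G(j+1)\equiv\alpha(1,r)G(j)$, hence $G(c)\equiv\alpha(1,r)^c$. Your telescoping display with the ``$(\dots)$'' is garbled, but the ``more cleanly'' identity is correct and is all you need.
\end{itemize}

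\textbf{Where it fails.} You assert that $(r,s)\in D(p^n)$ is equivalent to the unit congruence $F(a)\alpha(a,r)\equiv F(b)F(a-b)\alpha(b,s)\alpha(a-b,r-s)$ for every pair $a\ge b\ge 0$. This is false pointwise. Write $\binom{pa+r}{pb+s}=\binom ab\binom rs\,U(a,b)$ with $U(a,b)$ a unit. Membership in $D(p^n)$ says only that $\binom ab\,(U(a,b)-1)\equiv 0\pmod{p^n}$. With $k=v_p\binom ab$, this gives just $U(a,b)\equiv 1\pmod{p^{n-k}}$. The ``unreduced identity'' carries no further information, so you cannot cancel $\binom ab$ when $p\mid\binom ab$.

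This does no harm for $b=0$ or $a=b$, where $\binom ab=1$, nor for sufficiency. It bites exactly at your $u=1$ specialization. There $\binom{b+1}{b}=b+1$, so for $b\equiv -1\pmod p$ you do not obtain $\alpha(b,p-1)\alpha(b+1,r)\equiv\alpha(b,s)\alpha(1,r-s)\pmod{p^n}$. Consequently your lifting lemma, as you stated it (vanishing at \emph{all} nonnegative integers), cannot be invoked to get (2).

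\textbf{The repair.} This is what the paper does: divide by $b+1$ only when $p\nmid b+1$. Then (2) is known at every integer $b\not\equiv -1\pmod p$, and you need a lifting lemma that uses only such points.
\begin{itemize}
\item Your own finite-difference argument already suffices, once you note it needs only the values at $b=0,1,\dots,n-1$. Modulo $p^n$ the polynomial has degree at most $n-1$, and the binomial-basis coefficients are $\Delta^k H(0)$ with $k!$ a unit. All these points satisfy $b+1\le n<p$, so none is $\equiv -1\pmod p$.
\item Alternatively, use the paper's Lemma~\ref{zero_phi_one}: a polynomial of degree at most $n-1$ that vanishes on $p-1\ge n$ residue classes is zero modulo $p^n$.
\end{itemize}
With this correction your argument is the paper's proof.
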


As a corollary of the above theorem, we are also able to deduce the above criterion entirely in terms of generalized harmonic sums. To do this, we introduce a few more notations. For $m\in\{0,1,\ldots,p-1\}$ and an integer $l\geq 1$, we define  
    $$\h_m^{(l)} = \sum_{1\leq k\leq m}\frac{1}{k^l},$$
by convention, we set $\h_0^{(l)} = 0$ for all $l$. We have the following criterion. 
\begin{cor}\label{crit_power}
Let $r,s\in\p$ with $r\geq s$ and assume $n < p$. Then $(r,s)\in D(p^n)$ if and only if
\begin{enumerate}
    \item[(i)] $\h_r^{(l)}\equiv \h_s^{(l)}\equiv \h_{r-s}^{(l)}\bmod p^{n-l}$ for all $1\leq l\leq n-1$; and
    \item[(ii)] $\displaystyle\h^{(i)}_{p-1}\equiv\sum_{t=1}^{n-i-1}(-1)^{t+1}{i+t-1\choose t}\,p^t\,\h^{(i+t)}_r\bmod p^{n-i}$
    for all $1\leq i\leq n-2$.
\end{enumerate}
\end{cor}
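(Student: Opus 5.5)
The plan is to take $p$-adic logarithms of the two polynomial congruences in Theorem~\ref{alpha_crit_n} and read off coefficients. Set $y=px$. Each $\alpha(x,m)=\prod_{i\le m}(1+y/i)$ is a polynomial in $y$ over $\mathbb{Z}_p$. A series $f=\sum_k c_k y^k\in\mathbb{Z}_p[[y]]$ satisfies $f\equiv 0\bmod p^n$ as a series in $x$ exactly when $p^{\max(n-k,0)}\mid c_k$ for all $k$, that is, when $f\in(p,y)^n$. So both conditions of Theorem~\ref{alpha_crit_n} are statements about the ideal $(p,y)^n$ of $\mathbb{Z}_p[[y]]$.

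\emph{Key lemma.} For $u\in y\mathbb{Z}_p[[y]]$ we have $u\in(p,y)^n$ if and only if $\log(1+u)\in(p,y)^n$. I would prove this as follows. Suppose $u\in(p,y)^m$ with $m\ge1$. Then $u^j/j\in(p,y)^{jm-v_p(j)}$. For $j\ge2$ and $p\ge5$ we have $jm-v_p(j)\ge m+1$, since $v_p(j)\le \log_5 j\le j-2$ whenever $v_p(j)>0$. Hence $\log(1+u)\equiv u\bmod (p,y)^{m+1}$. Now take $m$ maximal with $u\in(p,y)^m$ (or $u=0$). The congruence shows that $\log(1+u)$ lies in $(p,y)^m$ but not in $(p,y)^{m+1}$, which gives both directions. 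Since each $\alpha$ is a unit in $\mathbb{Z}_p[[y]]$, a congruence $A\equiv B\bmod (p,y)^n$ is equivalent to $A/B-1\in(p,y)^n$. By the lemma this is in turn equivalent to $\log A\equiv\log B\bmod (p,y)^n$.

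\emph{Condition (i).} We have $\log\alpha(x,m)=\sum_{l\ge1}(-1)^{l+1}\frac{p^l x^l}{l}\h_m^{(l)}$, and $\h_m^{(l)}\in\mathbb{Z}_p$ because $m<p$. The congruence $\alpha(x,r)\equiv\alpha(x,s)\bmod p^n$ therefore becomes $p^l(\h_r^{(l)}-\h_s^{(l)})/l\equiv0\bmod p^n$ for every $l$.
\begin{itemize}
\item For $l\le n-1<p$, the integer $l$ is a unit, so this says $\h_r^{(l)}\equiv\h_s^{(l)}\bmod p^{n-l}$.
\item For $n\le l<p$ the condition is automatic.
\item For $l\ge p$ it is also automatic, since $l-v_p(l)\ge l-1\ge p-1\ge n$.
\end{itemize}
The same argument applies to the pair $r,\,r-s$, and this gives (i).

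\emph{Condition (ii).} Here I rewrite the condition as $\log\alpha(x,p-1)\equiv\log\alpha(x,r)-\log\bigl(\alpha(x+1,r)/\alpha(1,r)\bigr)$. Note that $\alpha(x+1,r)/\alpha(1,r)=\prod_{i=1}^r\bigl(1+\frac{px}{i+p}\bigr)$. Its logarithm has $x^l$-coefficient $(-1)^{l+1}\frac{p^l}{l}\sum_{i\le r}(i+p)^{-l}$. Expanding
\[
(i+p)^{-l}=\sum_{t\ge0}(-1)^t\binom{l+t-1}{t}p^t i^{-l-t}
\]
shows this coefficient equals $(-1)^{l+1}\frac{p^l}{l}\sum_{t\ge0}(-1)^t\binom{l+t-1}{t}p^t\h_r^{(l+t)}$. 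In the comparison, the $t=0$ term cancels against $\log\alpha(x,r)$. For $l\le n-1$ the condition becomes
\[
\h_{p-1}^{(l)}\equiv\sum_{t\ge1}(-1)^{t+1}\binom{l+t-1}{t}p^t\h_r^{(l+t)}\bmod p^{n-l}.
\]
All $\h_r^{(\cdot)}$ are $p$-integral, so the terms with $t\ge n-l$ vanish modulo $p^{n-l}$, and the sum can be truncated at $t=n-l-1$. For $l\ge n$ the condition is automatic, exactly as in (i). For $l=n-1$ the truncated sum is empty and the condition reads $\h_{p-1}^{(n-1)}\equiv0\bmod p$. This always holds, because $\h_{p-1}^{(k)}\equiv\sum_{i=1}^{p-1} i^{p-1-k}\equiv 0\bmod p$ whenever $p-1\nmid k$, and $1\le n-1<p-1$. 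So only $1\le i\le n-2$ remain, which is (ii).

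The step I expect to need the most care is the key lemma, since it is where $p\ge5$ and $n<p$ are really used. The point is to justify that the truncation "mod $p^n$ in $x$" matches the ideal $(p,y)^n$ and is compatible with $\log$ and $\exp$. Everything after that is coefficient bookkeeping.
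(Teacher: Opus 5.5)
Your proof is correct, but it is organized differently from the paper's.

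\textbf{Comparison with the paper.} The paper treats the two conditions of Theorem~\ref{alpha_crit_n} with separate tools.
\begin{itemize}
\item For (1) it compares coefficients of $x^k$ directly. This gives $\h_r(k)\equiv\h_s(k)\equiv\h_{r-s}(k)\bmod p^{n-k}$ for the elementary symmetric sums, which Newton's identities (Lemma~\ref{depth_power}) convert into power sums.
\item For (2) (Lemma~\ref{shift_power}) it evaluates at integers $a\ge 0$ and uses that $\log$ is an isometry on $1+p\mathbb{Z}_p$. It then truncates to the polynomial $L_m$, expands $L_r(x+1)$ binomially, and returns to a polynomial identity via Lemma~\ref{zero_phi_one}.
\end{itemize}
You instead prove one formal statement: $\log$ respects the $p$-adic filtration on series $1+u(px)$. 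You apply it to both ratios. As a result, (i) comes out without Newton's identities, and (ii) comes from the exact expansion of $(i+p)^{-l}$, with no interpolation step.

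What each route buys:
\begin{itemize}
\item Your route is more uniform. It also shows that the key lemma holds for every $n$, so $n<p$ enters only through dividing by $l\le n-1$, the tail $l\ge p$, and $\h^{(n-1)}_{p-1}\equiv 0\bmod p$.
\item The paper's route needs only the standard fact about $\log$ on $\mathbb{Z}_p$, so it never meets non-integral coefficients. Its intermediate criterion in terms of $\h_m(k)$ is also reused in Theorem~\ref{edge-midpoints}.
\end{itemize}

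\textbf{Two repairs your write-up needs.}
\begin{itemize}
\item The ideal $(p,y)^N$ is the wrong setting for the logarithm. $\log(1+u)$ usually has non-integral coefficients; already $\log(1+y)$ contains $y^p/p$. So it does not lie in the ideal $(p,y)^N$ of $\mathbb{Z}_p[[y]]$, and the claim $u^j/j\in(p,y)^{jm-v_p(j)}$ fails for $u=y$, $m=1$, $j=p$. Work instead with $F^N=\{\sum_k c_k y^k\in\mathbb{Q}_p[[y]] : v_p(c_k)\ge N-k \text{ for all } k\}$, the series whose $x$-coefficients lie in $p^N\mathbb{Z}_p$. This is what you actually use when you dispose of the terms with $l\ge p$. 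Since $F^aF^b\subseteq F^{a+b}$, $p^{-v}F^N=F^{N-v}$ and $F^N\cap\mathbb{Z}_p[[y]]=(p,y)^N$, your proof of the key lemma then goes through verbatim.
\item The inequality $l-v_p(l)\ge l-1$ is false when $p^2\mid l$. Use instead $l-v_p(l)\ge l-\log_p l\ge p-1\ge n$ for $l\ge p$.
\end{itemize}
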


We remark that the hypothesis $n< p$ in Theorem \ref{alpha_crit_n} is used only in the proof of the forward direction, whereas in the proof of Corollary \ref{crit_power} it is used in both directions. We also remark that when $n = 2$, condition $(ii)$ in Corollary \ref{crit_power} is empty. Hence, it recovers Rowland's criterion \cite{rowland}. 

Pascal's triangle is invariant under one reflection symmetry, namely $\binom rs=\binom r{r-s}$. Further, by fixing the number of rows of Pascal's triangle to be $p$, from $0$ to $p-1$, then modulo $p$, up to sign, it is invariant under all the symmetries of the dihedral group $D_3$ (see Definition \ref{dfn symmetric}). We prove in Proposition \ref{symmetry_D_p^n} that the points of $D(p^n)$ satisfy the rotation symmetries in the sense of Definition \ref{dfn symmetric}. For $n=2$, Rowland's criterion \cite{rowland} shows that converse also holds, that is a point satisfying these symmetries lies in $D(p^2)$. However, this fact does not hold in general (see Remark \ref{rk centroid}).  Further the following result shows that $D(p^n)$ is invariant under the rotation $(r,s)\mapsto(p-1-s,r-s)$ and the reflection $(r,s)\mapsto(r,r-s)$. 
\begin{theorem}\label{full orbit}
   Let $1\leq n<p$. If $(r,s)\in D(p^n)$, then $(r,r-s)\in D(p^n)$ and $(p-1-s,\,r-s)\in D(p^n)$.
\end{theorem}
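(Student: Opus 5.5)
We will deduce both invariances from the criterion of Theorem \ref{alpha_crit_n}. The case $n=1$ is trivial by Lucas' theorem, so assume $2\le n<p$. Theorem \ref{alpha_crit_n} then characterizes $(r,s)\in D(p^n)$ by two conditions. Condition (1) says that $\alpha(x,r)$, $\alpha(x,s)$ and $\alpha(x,r-s)$ are all congruent modulo $p^n$. Condition (2) says that $\alpha(x,p-1)\alpha(x+1,r)\equiv\alpha(1,r)\alpha(x,r)$, and it involves only $r$.

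\emph{The reflection.} The reflection $(r,s)\mapsto(r,r-s)$ is immediate. Condition (1) is symmetric in $s$ and $r-s$, since the triple $\{r,s,r-s\}$ becomes $\{r,r-s,s\}$. Condition (2) depends only on $r$, which is unchanged. So $(r,r-s)\in D(p^n)$. (One could also argue directly from $\binom{pa+r}{pb+s}=\binom{pa+r}{p(a-b)+(r-s)}$, but the criterion is cleaner.)

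\emph{The rotation.} Set $r'=p-1-s$ and $s'=r-s$, so that $r'-s'=p-1-r$. Note $0\le s'\le r'\le p-1$. We must verify two things:
\[
\alpha(x,p-1-s)\equiv\alpha(x,r-s)\equiv\alpha(x,p-1-r),
\qquad
\alpha(x,p-1)\,\alpha(x+1,p-1-s)\equiv\alpha(1,p-1-s)\,\alpha(x,p-1-s)\pmod{p^n}.
\]

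The key tool is a reflection identity for $\alpha$. Substituting $i\mapsto p-i$ gives
\[
\prod_{i=m+1}^{p-1}\Bigl(1+\frac{px}{i}\Bigr)=\prod_{j=1}^{p-1-m}\Bigl(1+\frac{px}{p-j}\Bigr),
\]
and each factor satisfies $1+\frac{px}{p-j}=\frac{p-j+px}{p-j}=\frac{j-p(x+1)}{j}\cdot\frac{j}{j-p}$. Since $\prod_{j\le k}\frac{j}{j-p}=\alpha(-1,k)^{-1}$, this yields
\[
\alpha(x,p-1)=\alpha(x,m)\,\frac{\alpha(-x-1,p-1-m)}{\alpha(-1,p-1-m)} .\qquad(\ast)
\]
Condition (2) for $r$ is a relation between $\alpha(x+1,r)$ and $\alpha(x,r)$. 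Combined with $(\ast)$, it should convert into statements about $\alpha(\cdot,p-1-r)$ at $x$ versus $-x-1$.

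The plan is as follows. First, apply $(\ast)$ with $m=r$ and $m=s$. Using condition (1), namely $\alpha(x,r)\equiv\alpha(x,s)$ as polynomials, we get $\alpha(-x-1,p-1-r)/\alpha(-1,p-1-r)\equiv\alpha(-x-1,p-1-s)/\alpha(-1,p-1-s)$. Second, use condition (2) to relate the value at $-x-1$ back to the value at $x$. Replacing $x$ by $-x-1$ in (2) and using (1) to replace $\alpha(\cdot,r)$ by $\alpha(\cdot,s)$ or $\alpha(\cdot,r-s)$ as needed should give a functional equation of the shape $\alpha(-x,m)\approx\alpha(x,m)^{-1}$ up to constants for the relevant $m$. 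From this we obtain $\alpha(x,p-1-r)\equiv\alpha(x,p-1-s)$, and similarly with $r-s$ via the companion triple, which is condition (1) for $(r',s')$. Third, derive condition (2) for $r'=p-1-s$ by inserting $(\ast)$ at $m=s$ into (2) and using (2) for $r$ together with $\alpha(x,s)\equiv\alpha(x,r)$.

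Throughout, all polynomials have constant term $1$ and are invertible in $\mathbb{Z}_p[[x]]$, with coefficient of $x^k$ divisible by $p^k$. Congruences modulo $p^n$ therefore only involve degrees $<n$, and divisions are legitimate.

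\emph{Main obstacle.} The hard part is the second step: extracting the equality $\alpha(x,p-1-r)\equiv\alpha(x,r-s)$, which links the third vertex of the triple, from the given data. The identity $(\ast)$ naturally pairs $r$ with $p-1-r$ and $s$ with $p-1-s$, but it does not pair either of them with $r-s$. Our first attempt will be to apply the reflection invariance already proved: $(r,r-s)\in D(p^n)$, and rotating that point gives the companion point $(p-1-r+s,s)$. Combining the two instances of $(\ast)$ coming from these points should close the cycle. If that fails, we will fall back on Corollary \ref{crit_power}, expanding $(\ast)$ into harmonic sums via $\h^{(l)}_{p-1-m}\equiv(-1)^{l}\cdots$ identities modulo powers of $p$ and checking (i) and (ii) directly.
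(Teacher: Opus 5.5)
\textbf{There is a genuine gap.} Your reflection argument is fine and matches the paper. Your identity $(\ast)$ is correct: it is Lemma~\ref{alpha_factor}(2) with $m$ replaced by $p-1-m$. Combined with condition (1), and evaluating at $x=-1$ to match the constants, it does give $\alpha(x,p-1-r)\equiv\alpha(x,p-1-s)\bmod p^n$.

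The gap is where you locate it: linking $\{p-1-r,p-1-s\}$ to $\{r,s,r-s\}$. Your proposed fix cannot work. Instances of $(\ast)$ for $(r,s)$ and $(r,r-s)$ only transport condition (1), and condition (1) alone never suffices. For $(r,s)=(0,0)$, condition (1) and $(\ast)$ hold trivially. But the rotated point $(p-1,0)$ needs $\alpha(x,p-1)\equiv1\bmod p^n$, which is condition (2) for $(0,0)$. This fails for $p=5$, $n=4$, since the $x$-coefficient is $5\h^{(1)}_4=125/12$. So condition (2) must enter essentially. Also, ``rotating'' $(r,r-s)$ presupposes the statement you are proving.

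The missing ingredient is the oddness relation $\alpha(x,r)\alpha(-x,r)\equiv1\bmod p^n$ (Lemma~\ref{alpha_odd}). It does not follow from a direct substitution ``up to constants''.
\begin{itemize}
\item Substitute $x\mapsto-1-x$ in (2) and use $\alpha(-1-x,p-1)=\alpha(x,p-1)$, which is your $(\ast)$ at $m=0$.
\item Cross-multiplying with (2) yields only $g(x+1)\equiv g(x)$, where $g(x)=\alpha(x,r)\alpha(-x,r)$.
\item You still need that $g\bmod p^n$ has degree $\le n-1<p$, because the coefficient of $x^k$ is divisible by $p^k$. A $1$-periodic polynomial of such low degree is constant, equal to $g(0)=1$, by Lemma~\ref{zero_phi_one}. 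This is where $n<p$ is used.
\end{itemize}

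With oddness in hand, your setup closes quickly:
\begin{itemize}
\item Insert $(\ast)$ at $m=r$ into (2) and cancel $\alpha(x,r)$. This gives $\alpha(-(x+1),p-1-r)\,\alpha(x+1,r)\equiv\alpha(1,r)\alpha(-1,p-1-r)$.
\item Evaluating at $x=-1$ shows the constant is $1$. Hence $\alpha(x,p-1-r)\equiv\alpha(-x,r)^{-1}\equiv\alpha(x,r)$.
\item Therefore $\alpha(x,p-1-s)\equiv\alpha(x,p-1-r)\equiv\alpha(x,r)\equiv\alpha(x,r-s)$. This is condition (1) for $(p-1-s,r-s)$.
\item Condition (2) for $r'=p-1-s$ is then literally condition (2) for $r$, since $\alpha(x,p-1-s)\equiv\alpha(x,r)$ as polynomials.
\end{itemize}
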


As a direct corollary of the above theorem we have the following result, which shows that if $(r,s)\in D(p^n)$ then its full orbit under the action of the dihedral group $D_3$ lies in $D(p^n)$. 
\begin{cor}\label{cor_orbit}
  Let $1\leq n<p$. If $(r,s)\in D(p^n)$, then  $$\left\{\begin{array}{lll}
   (r,s), & (p-1-s,\,r-s), & (p-1-r+s,\,p-1-r),\\
   (r,\,r-s), & (p-1-r+s,\,s), & (p-1-s,\,p-1-r)
 \end{array}\right\}\subseteq D(p^n).$$
\end{cor}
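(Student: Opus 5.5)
The plan is to obtain Corollary \ref{cor_orbit} by iterating Theorem \ref{full orbit}. Write $\sigma(r,s)=(r,r-s)$ for the reflection and $\rho(r,s)=(p-1-s,\,r-s)$ for the rotation. Theorem \ref{full orbit} says that $D(p^n)$ is closed under both $\sigma$ and $\rho$. Hence it is closed under every composite of them, that is, under the group they generate.

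First I would check that both maps preserve the triangle $T=\{(r,s):0\le s\le r\le p-1\}$, so that they can be composed freely. For $\sigma$ this is clear. For $\rho$, the image satisfies $0\le r-s\le p-1-s\le p-1$, since $r\le p-1$.

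Next I would compute the images of $(r,s)$ under the powers of $\rho$:
\begin{itemize}
\item $\rho(r,s)=(p-1-s,\,r-s)$;
\item $\rho^2(r,s)=\rho(p-1-s,\,r-s)=(p-1-r+s,\,p-1-r)$, using $(p-1-s)-(r-s)=p-1-r$;
\item $\rho^3(r,s)=\rho(p-1-r+s,\,p-1-r)=(r,s)$, using $(p-1-r+s)-(p-1-r)=s$.
\end{itemize}
So $\rho$ has order $3$, and these three points form the first row of the displayed set.

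Then I would apply $\sigma$ to each of the three points:
\begin{itemize}
\item $\sigma(r,s)=(r,r-s)$;
\item $\sigma\rho(r,s)=(p-1-s,\,p-1-r)$, since $(p-1-s)-(r-s)=p-1-r$;
\item $\sigma\rho^2(r,s)=(p-1-r+s,\,s)$, since $(p-1-r+s)-(p-1-r)=s$.
\end{itemize}
These are exactly the three entries of the second row.

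Each of the six points arises from $(r,s)$ by applying $\sigma$ and $\rho$ finitely many times. Each application keeps us inside $D(p^n)$ by Theorem \ref{full orbit}, with the same $n$ and $n<p$ throughout. Therefore all six points lie in $D(p^n)$.

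There is no real obstacle here, only bookkeeping. The one point to check carefully is that the six coordinate formulas match the displayed set exactly. One could also note that $\sigma\rho\sigma=\rho^{-1}$, which confirms that these six maps form $D_3$, but this is not needed for the inclusion.
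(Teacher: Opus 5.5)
Your proposal is correct and follows the paper's approach: the paper presents this corollary as a direct consequence of Theorem \ref{full orbit}, i.e.\ closure of $D(p^n)$ under the reflection and the rotation. Your check that both maps preserve $0\le s\le r\le p-1$ and your computation of $\rho,\rho^2,\sigma,\sigma\rho,\sigma\rho^2$ supply exactly the bookkeeping the paper leaves implicit.
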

Corollary \ref{cor_orbit} together with the Jacobsthal-Ljunggren congruence in \eqref{eqn Jl type intro} implies that the set of vertices $\{(0,0), (p-1,0),(p-1,p-1)\}$ is contained in $D(p^3)$ for all $p\geq 5$. 

Next we study the special points of Pascal's triangle with $p$ rows from $0$ to $p-1$, such as the vertices, the centroid and the edge mid points. As noted above, the Jacobsthal-Ljunggren congruence in \eqref{eqn Jl type intro} is the same as $(0,0)\in D(p^3)$. The following result generalizes the Jacobsthal-Ljunggren congruence. 
\begin{cor}\label{main_base_gen}
Let $n$ be a positive integer with $n < p$. Then the following are equivalent 
\begin{enumerate}
    \item the vertices lie in $D(p^n)$;
    \item $(0,0)\in D(p^n)$;
    \item  $ \h_{p-1}^{(l)}\equiv 0\bmod p^{n-l}$ for all $l\in \{1,2,\cdots, n-2\}$.
\end{enumerate}
\end{cor}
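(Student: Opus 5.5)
(1)⇒(2) is immediate, since $(0,0)$ is one of the vertices. For (2)⇒(1) I apply Corollary \ref{cor_orbit} to $(r,s)=(0,0)$. The orbit listed there is
\[
\{(0,0),\ (p-1,0),\ (p-1,p-1)\},
\]
because $(p-1-s,r-s)=(p-1,0)$, $(p-1-r+s,p-1-r)=(p-1,p-1)$, and the remaining three entries repeat these. So all three vertices lie in $D(p^n)$ whenever $(0,0)$ does. This uses $1\le n<p$, which is the hypothesis here.

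The substance is (2)⇔(3). I apply Corollary \ref{crit_power} with $r=s=0$. Since $\h_0^{(l)}=0$ for every $l$, condition (i) reads $0\equiv0\equiv0$ and holds automatically. In condition (ii), every term on the right-hand side contains a factor $\h_0^{(i+t)}=0$, so the sum vanishes. Condition (ii) therefore becomes
\[
\h_{p-1}^{(i)}\equiv 0\bmod p^{n-i}\quad\text{for all }1\le i\le n-2,
\]
which is exactly (3). Hence $(0,0)\in D(p^n)$ if and only if (3) holds.

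For $n=1,2$ the range of $l$ in (3) is empty, and (2) holds by Lucas' theorem and by Rowland's result respectively; this is consistent with condition (ii) of Corollary \ref{crit_power} being empty in that range.

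I expect no real obstacle. The only point to check is that the conventions are applied uniformly: $\alpha(x,0)=1$ and $\h_0^{(l)}=0$ must be used consistently, so that Corollary \ref{crit_power} (or Theorem \ref{alpha_crit_n}) applies at the vertex $r=s=0$, where $r-s=0$ as well. As a sanity check, Theorem \ref{alpha_crit_n} with $r=0$ gives the same result: condition (1) is trivial and condition (2) becomes $\alpha(x,p-1)\equiv1\bmod p^n$. Expanding $\alpha(x,p-1)$ in elementary symmetric functions of the $1/i$ and converting to power sums via Newton's identities recovers (3). This conversion is presumably what the paper already does in deriving Corollary \ref{crit_power}.
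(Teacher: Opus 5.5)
Your proposal is correct and follows the paper's proof essentially verbatim: Corollary~\ref{cor_orbit} reduces the vertex set to $(0,0)$, and Corollary~\ref{crit_power} at $r=s=0$ makes (i) trivial and collapses (ii) to (3) because $\h_0^{(l)}=0$. One small remark, on your sanity check only: going through $\alpha(x,p-1)\equiv1\bmod p^n$ and Newton's identities (Lemma~\ref{depth_power} with $j'=0$) yields $\h_{p-1}^{(l)}\equiv0\bmod p^{n-l}$ for all $1\le l\le n-1$, so you need Lemma~\ref{valuation} to discard the case $l=n-1$; also, the paper derives (ii) via $p$-adic logarithms (Lemma~\ref{shift_power}), not via Newton's identities.
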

As a special case of the above corollary, we recover the Jacobsthal-Ljunggren congruence in \eqref{eqn Jl type intro} in Corollary \ref{vertex_small_n} $(a)$ and Zhao's result \cite{Zhao} for $n =4$ in Corollary \ref{vertex_small_n} $(b)$ and the result in Corollary \ref{vertex_small_n} $(c)$ can be deduced from Zhao \cite[Theorem 3.2]{Zhao}. 

McIntosh \cite{McIntosh} conjectured that no prime satisfies ${2p-1\choose p-1}\equiv1\bmod p^5$. By Corollary \ref{vertex_small_n}(c) this conjecture is equivalent to $(0,0)\notin D(p^5)$ for all prime $p\geq 7$. Note that a prime $p$ with $(0,0)\in D(p^5)$ satisfies ${2p-1\choose p-1}\equiv1\bmod p^5$, and in particular is a Wolstenholme prime. However, the only Wolstenholme primes below $10^{11}$ are $16843$ and $2124679$ \cite{BHMT} neither of which satisfies ${2p-1\choose p-1}\equiv1\bmod p^5$. Thus $(0,0)\notin D(p^5)$ for all primes $p<10^{11}$. Note that $D(p^n)\subseteq D(p^5)$ for all $n\geq 5$. Therefore, $(0,0)\notin D(p^n)$ for all $n\geq 5$ and for all primes $p<10^{11}$.

When $p\equiv 1\bmod 3$, the centroid $(2(p-1)/3,(p-1)/3)$ exists. As noted above, the centroid  in general does not belong $D(p^n)$ even though it satisfies symmetries in the sense of Definition \ref{dfn symmetric}. For $n= 3,4$, we have the following result for centroid and we refer to Theorem \ref{centroid_n} for arbitrary $n$. 

\begin{cor}\label{centroid_small_n}
    Let $p\equiv1\bmod3$ and $m=\frac{p-1}{3}$. Then 
\begin{enumerate}
    \item $(2m,m)\in D(p^3)$ if and only if $B_{p-2}(1/3)\equiv0\bmod p$.
    \item $(2m,m)\in D(p^4)$ if and only if $p$ is a Wolstenholme prime and $\h^{(2)}_m\equiv0\bmod p^2$.
\end{enumerate}
\end{cor}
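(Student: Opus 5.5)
We apply Corollary \ref{crit_power} with $(r,s)=(2m,m)$, $r-s=m$, taking $n=3$ and $n=4$ (allowed, since $p\equiv1\bmod 3$ and $p\ge 7$ give $n<p$).

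\emph{Condition (i).} Here (i) reads $\h_{2m}^{(l)}\equiv\h_m^{(l)}\bmod p^{n-l}$ for $1\le l\le n-1$. To compare $\h_{2m}^{(l)}$ with $\h_m^{(l)}$, we use the reflection $k\mapsto p-k$. Since $p-1-2m=m$, we get
$$\h_{p-1}^{(l)}-\h_{2m}^{(l)}=\sum_{k=1}^{m}\frac{1}{(p-k)^l}=(-1)^l\sum_{k=1}^m\sum_{j\ge0}\binom{l+j-1}{j}\frac{p^j}{k^{l+j}}.$$
Truncating this expansion modulo $p^{n-l}$ expresses $\h_{2m}^{(l)}$ through $\h_{p-1}^{(l)}$ and the sums $\h_m^{(l+j)}$.

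For $n=3$, condition (i) becomes:
\begin{itemize}
\item[] $l=2$: $\h_{2m}^{(2)}\equiv\h_m^{(2)}\bmod p$. Since $\h_{p-1}^{(2)}\equiv0\bmod p$, this says $-\h_m^{(2)}\equiv\h_m^{(2)}$, i.e. $\h_m^{(2)}\equiv0\bmod p$.
\item[] $l=1$: $\h_{2m}\equiv\h_m\bmod p^2$. With $\h_{p-1}\equiv0\bmod p^2$ (Wolstenholme), this becomes $2\h_m+p\h_m^{(2)}\equiv0\bmod p^2$.
\end{itemize}
Condition (ii) for $n=3$, $i=1$ reads $\h_{p-1}\equiv p\h_{2m}^{(2)}\bmod p^2$. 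Since the left side is $0\bmod p^2$, this is again $\h_{2m}^{(2)}\equiv0\bmod p$, which is implied by the above.

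Next I translate into Bernoulli data using the standard Lehmer-type congruences for $\h_{\lfloor p/3\rfloor}$ and $\h^{(2)}_{\lfloor p/3\rfloor}$. The first is $\h_m\equiv-\tfrac32 q_p(3)+\tfrac34 p\,q_p(3)^2\bmod p^2$, where $q_p(3)$ is the Fermat quotient. The second is $\h_m^{(2)}\equiv\tfrac12 B_{p-2}(1/3)\bmod p$, up to an explicit unit factor, which I would derive via $\sum_{k\le m}k^{p-3}\equiv (B_{p-2}(m+1)-B_{p-2})/(p-2)$ together with $B_{p-2}=0$. I expect the $l=1$ condition to be automatic given $\h^{(2)}_m\equiv0\bmod p$. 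The cleanest way to see this is to expand $2\h_m+p\h_m^{(2)}$ in powers of $p$ using Lehmer's formulas. The Fermat quotient terms should cancel, leaving a multiple of $p\,B_{p-2}(1/3)$.

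This cancellation is the main obstacle. I will try to verify it via the identity $\h_{2m}-\h_m=\sum_{k=m+1}^{2m}1/k$, paired with the $\bmod p^2$ expansion of $\sum_{k<p,\,k\equiv j\ (3)}$-type sums. Failing that, I will use the generating identity for $\sum_{p/3<k<2p/3}1/k$ in terms of $B_{p-2}(1/3)$ from Sun's work.

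\emph{The case $n=4$.} Condition (ii) with $i=2$ gives $\h_{p-1}^{(2)}\equiv2p\,\h^{(3)}_{2m}\equiv0\bmod p^2$ modulo the relevant power. With $i=1$ it gives $\h_{p-1}\equiv p\h_{2m}^{(2)}-p^2\h_{2m}^{(3)}\bmod p^3$. Combining these with (i) for $l=1,2,3$ and the reflection expansion should reduce the conditions to $\h_{p-1}\equiv0\bmod p^3$ (Wolstenholme prime) together with $\h_m^{(2)}\equiv0\bmod p^2$. The remaining $\h_m^{(3)}$ conditions are absorbed using $\h^{(3)}_{p-1}\equiv0\bmod p$ and the earlier relations.
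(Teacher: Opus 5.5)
\textbf{Part (1): a sign error.} Your reduction of condition (i) for $l=1$, $n=3$ has a sign error, and that error is what produces your ``main obstacle''. For $l=1$ your own expansion gives $\h^{(1)}_{p-1}-\h^{(1)}_{2m}\equiv-\h^{(1)}_m-p\,\h^{(2)}_m\bmod p^2$. Wolstenholme then yields $\h^{(1)}_{2m}\equiv\h^{(1)}_m+p\,\h^{(2)}_m\bmod p^2$, so the condition $\h^{(1)}_{2m}\equiv\h^{(1)}_m\bmod p^2$ is exactly $\h^{(2)}_m\equiv0\bmod p$, with no Fermat quotients involved. You applied to $l=1$ the even-$l$ sign pattern, which is correct only for $l=2$.

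As written, your plan would fail. By the Lehmer-type formula you quote, $2\h^{(1)}_m+p\,\h^{(2)}_m\equiv-3q_p(3)\bmod p$, which is nonzero unless $p$ is a base-$3$ Wieferich prime, so the cancellation you hope for is false. Once the sign is corrected, part (1) follows from your $n=3$ reductions together with $\h^{(2)}_m\equiv\frac12B_{p-2}(1/3)\bmod p$, which you derive correctly.

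\textbf{Part (2): the $\h^{(3)}$ terms do not drop out.} This is a substantive gap. Your claim $2p\,\h^{(3)}_{2m}\equiv0\bmod p^2$ is false in general, because $\h^{(3)}_{2m}\equiv\h^{(3)}_m\bmod p$ and $\h^{(3)}_m\not\equiv0\bmod p$ (for $p=7$, $\h^{(3)}_2=9/8\equiv2$). The fact $\h^{(3)}_{p-1}\equiv0\bmod p$ only makes the $l=3$ condition automatic.

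The correct reduction is as follows. Given (i), condition (ii) amounts to $\h^{(2)}_m\equiv0\bmod p^2$. After that, (i) for $l=2$ and $l=1$ reads $\h^{(2)}_{p-1}\equiv2p\,\h^{(3)}_m\bmod p^2$ and $\h^{(1)}_{p-1}\equiv-p^2\h^{(3)}_m\bmod p^3$. Turning these into the Wolstenholme condition needs two inputs your plan lacks:
\begin{enumerate}
\item the evaluation $\h^{(3)}_m\equiv-4B_{p-3}\bmod p$, obtained from $\sum_{k\le m}k^{p-4}$, $m+1\equiv2/3$, and Raabe's multiplication formula, which gives $B_{p-3}(2/3)\equiv13B_{p-3}\bmod p$;
\item Glaisher's congruences $\h^{(2)}_{p-1}\equiv\frac{2p}{3}B_{p-3}\bmod p^2$ and $\h^{(1)}_{p-1}\equiv-\frac{p^2}{3}B_{p-3}\bmod p^3$.
\end{enumerate}
Both conditions then become $\frac{13}{3}B_{p-3}\equiv0\bmod p$. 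This is equivalent to $p$ being a Wolstenholme prime only when $p\neq13$. For $p=13$ the condition holds automatically, and one must check separately that $\h^{(2)}_4=205/144\not\equiv0\bmod 13$. By discarding the $\h^{(3)}_m$ terms, you reach the correct statement through an invalid step and never see this exceptional case.
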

Here, $B_k(x)$ denotes the $k$-th Bernoulli polynomial.  Z.-W.~Sun \cite{sun2010} observed that $205129$ is the first prime satisfying $B_{p-2}(1/3)\equiv 0 \bmod p$, and that the second such prime must be greater than $2\times 10^{7}$. He further remarks that standard heuristic
arguments suggest there are infinitely many such primes. Therefore, the only known prime for which the centroid belongs $D(p^3)$ is $205129$. In the context of Corollary \ref{centroid_small_n} $(2)$, we note that both known Wolstenholme primes, $16843$ and $2124679$, satisfy $p\equiv1\bmod3$, but for both $\h^{(2)}_{(p-1)/3}\not\equiv0\bmod p$. Hence there is no known prime for which the centroid lies in $D(p^4)$.

 A prime $p$ is called a \emph{Wieferich prime} if $2^{p-1}\equiv 1\bmod p^2$, and a \emph{super Wieferich prime} if $2^{p-1}\equiv 1\bmod p^3$. The only known Wieferich primes are $1093$ and $3511$, and these are the only ones up to $4\times10^{17}$ (see Murty and S\'eguin \cite[p.~2]{murty}). Neither of them is a super Wieferich prime, and heuristically there should be only finitely many super Wieferich primes \cite{murty}. For $n =3$, we have the following result for edge midpoints and refer to Theorem \ref{edge-midpoints} for arbitrary $n$. 
\begin{cor}\label{edge midpoint_p^3}
Let $e=(p-1)/2$. Then the edge midpoints lie in $D(p^3)$ if and only if $\h_e^{(1)}\equiv 0\bmod p^2 $ if and only if $p$ is a super Wieferich prime.
\end{cor}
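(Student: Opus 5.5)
The plan is to use the orbit structure to reduce to a single point, apply Corollary \ref{crit_power} with $n=3$, and finish with Lehmer's congruence for $\h_e^{(1)}$.

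\emph{Reduction to one point.} The edge midpoints of the triangle with vertices $(0,0),(p-1,0),(p-1,p-1)$ are $(e,0)$, $(e,e)$ and $(p-1,e)$. The reflection $(r,s)\mapsto(r,r-s)$ sends $(e,0)$ to $(e,e)$. The rotation $(r,s)\mapsto(p-1-s,r-s)$ sends $(e,0)$ to $(p-1,e)$. Since $p\geq5$, we have $n=3<p$. So Corollary \ref{cor_orbit} shows that all three edge midpoints lie in $D(p^3)$ if and only if $(e,0)\in D(p^3)$.

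\emph{Applying Corollary \ref{crit_power}.} Take $n=3$ and $(r,s)=(e,0)$. Condition (i) has two parts.
\begin{itemize}
\item For $l=1$ it reads $\h_e^{(1)}\equiv \h_0^{(1)}=0\equiv\h_e^{(1)}\bmod p^2$, which is just $\h_e^{(1)}\equiv0\bmod p^2$.
\item For $l=2$ it reads $\h_e^{(2)}\equiv0\bmod p$.
\end{itemize}
Condition (ii) has only the case $i=1$, namely $\h_{p-1}^{(1)}\equiv p\,\h_e^{(2)}\bmod p^2$.

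I claim the last two congruences hold automatically.
\begin{itemize}
\item Pairing $k$ with $p-k$ gives $1/k^2\equiv 1/(p-k)^2\bmod p$, so $2\h_e^{(2)}\equiv\h_{p-1}^{(2)}\bmod p$.
\item The classical fact $\h_{p-1}^{(2)}\equiv0\bmod p$ (for $p\ge5$, since $\sum k^{-2}$ runs over $\sum j^2$ for $j\not\equiv0$) then gives $\h_e^{(2)}\equiv0\bmod p$, and hence $p\,\h_e^{(2)}\equiv0\bmod p^2$.
\item Wolstenholme's theorem gives $\h_{p-1}^{(1)}\equiv0\bmod p^2$, so condition (ii) holds.
\end{itemize}
Therefore $(e,0)\in D(p^3)$ if and only if $\h_e^{(1)}\equiv0\bmod p^2$, which proves the first equivalence.

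\emph{Connecting to super Wieferich primes.} Write $q=(2^{p-1}-1)/p$ for the Fermat quotient. I would invoke Lehmer's classical congruence
$$\h_e^{(1)}\equiv -2q+p\,q^2\bmod p^2.$$
If necessary, this can be rederived by expanding $2^{p}=(1+1)^p$ and using $\binom{p}{k}/p\equiv (-1)^{k-1}k^{-1}(1-p\h_{k-1}^{(1)})\bmod p^2$, but I would simply cite it.

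\begin{itemize}
\item If $2^{p-1}\equiv1\bmod p^3$, then $q\equiv0\bmod p^2$. Both terms on the right vanish modulo $p^2$, so $\h_e^{(1)}\equiv0\bmod p^2$.
\item Conversely, suppose $\h_e^{(1)}\equiv0\bmod p^2$. Reducing modulo $p$ gives $q\equiv0\bmod p$. Then $pq^2\equiv0\bmod p^3$, so $2q\equiv0\bmod p^2$. Since $p$ is odd, $q\equiv0\bmod p^2$, which is exactly $2^{p-1}\equiv1\bmod p^3$.
\end{itemize}

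The only non-formal input is Lehmer's congruence modulo $p^2$. That is the step I expect to be the main obstacle if a self-contained derivation is required rather than a citation.
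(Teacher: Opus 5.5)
Your proof is correct and follows essentially the same route as the paper. You reduce to $(e,0)$ via Corollary \ref{cor_orbit}, observe that $\h_e^{(2)}\equiv 0\bmod p$ holds automatically by pairing $k$ with $p-k$, and finish with Lehmer's congruence exactly as the paper does. The only minor difference is that you apply Corollary \ref{crit_power} with $n=3$ directly and dispose of condition (ii) using Wolstenholme's theorem. The paper instead invokes Theorem \ref{edge-midpoints}, where the shift condition is shown to follow from $\alpha(x,e)\equiv 1\bmod p^n$ via Lemma \ref{alpha_factor}(2) for every $n<p$.
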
 
Since no super Wieferich prime is known, no prime is known for which the edge midpoints lie in $D(p^3)$. 

In the context of McIntosh conjecture \cite{McIntosh}, it is natural to ask whether $D(p^n)$ being non-empty implies that $(0,0)\in D(p^n)$. In Theorem \ref{edge-midpoints}, we prove that if the edge midpoints lie in $D(p^n)$ then $(0,0)\in D(p^n)$. However, in general, this question appears to be more involved and remains open.

Our article is organised as follows. In Section \S \ref{harmonic sum sec}, we prove some technical lemmas. Theorem \ref{alpha_crit_n} and Corollary \ref{crit_power} are proved in Section \S \ref{main criterian}. Theorem \ref{full orbit} and Corollary \ref{cor_orbit} are established in Section \S \ref{symmetry_sec}. The Jacobsthal–Ljunggren type congruence, including Corollary \ref{main_base_gen}, are proved in Section \S \ref{vitices sec}. Finally, results concerning centroids and edge midpoints, including Corollary \ref{centroid_small_n} and Corollary \ref{edge midpoint_p^3}, are proved in Sections \S \ref{centroid sec} and \S \ref{edge midpoint sec}, respectively.


\section{Notation and known results}
We introduce some notation which will be used throughout the paper. For $m,k\in\p$, we define
$$\h_m(k) := \sum_{1\leq i_1<i_2<\cdots<i_k\leq m}\frac{1}{i_1i_2\cdots i_k}\quad\text{for } 1\leq k\leq m,$$and by convention, we set $\h_m(0) = 1$, and $\h_m(k) = 0$ for $k> m$. We note that $\h_m(k)\in\mathbb Z_p$ for all $m\leq p-1$, so it is well defined modulo $p^n$ for all $n\geq 1$. Using the definition of $\alpha(x,m)$, observe that $$\alpha(x,m) = \underset{0\leq k\leq m}\sum (px)^k\h_m(k)\in\mathbb{Z}_p[x].$$
Note that $\alpha(x,m)\equiv 1\bmod p$ for all $m\leq p-1$. Therefore, for every $a\in \mathbb{Z}$, $\alpha(a,m)$ is a unit element in $\mathbb{Z}_p$ and hence its image is a unit in the ring $\mathbb{Z}/p^n\mathbb{Z}$.
Next, we recall some known results which will be needed later.

\begin{theorem}[Wolstenholme, \cite{Wolstenholme}]\label{Wolstenholme}
Let $p\geq 5$ be a prime. Then $\h_{p-1}^{(1)}\equiv 0\bmod p^2$ and $\h_{p-1}^{(2)}\equiv0\bmod p$.  Moreover,     
 ${{2p-1}\choose p-1}\equiv 1\bmod p^3.$
 \end{theorem}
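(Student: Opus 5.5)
Although this is a classical result that the paper simply quotes, here is how I would prove it directly, working throughout in $\mathbb{Z}_p$ with $p\geq 5$.

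\textbf{Step 1: $\h_{p-1}^{(2)}\equiv 0\bmod p$.} The map $k\mapsto k^{-1}$ permutes the nonzero residues modulo $p$. Hence
$$\h_{p-1}^{(2)}\equiv\sum_{k=1}^{p-1}k^2=\frac{(p-1)p(2p-1)}{6}\bmod p.$$
Since $p\geq 5$, the denominator $6$ is a unit, so this sum is divisible by $p$. The same argument shows $\sum_{k=1}^{p-1}k^{-l}\equiv 0\bmod p$ whenever $(p-1)\nmid l$.

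\textbf{Step 2: $\h_{p-1}^{(1)}\equiv 0\bmod p^2$.} Pair the term $k$ with the term $p-k$ to get
$$2\h_{p-1}^{(1)}=\sum_{k=1}^{p-1}\Big(\frac1k+\frac1{p-k}\Big)=p\sum_{k=1}^{p-1}\frac{1}{k(p-k)}.$$
The remaining sum only needs to be known modulo $p$. There $\frac{1}{k(p-k)}\equiv -\frac{1}{k^2}$, so it is congruent to $-\h_{p-1}^{(2)}$, which vanishes by Step 1. Thus $2\h_{p-1}^{(1)}\equiv 0\bmod p^2$. Since $2$ is a unit, the claim follows.

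\textbf{Step 3: the binomial congruence.} Using the notation of the paper, write
$$\binom{2p-1}{p-1}=\prod_{k=1}^{p-1}\frac{p+k}{k}=\alpha(1,p-1)=\sum_{k\geq 0}p^k\h_{p-1}(k).$$
Modulo $p^3$ this reduces to $1+p\,\h_{p-1}(1)+p^2\,\h_{p-1}(2)$, because each $\h_{p-1}(k)$ lies in $\mathbb{Z}_p$. We have $\h_{p-1}(1)=\h_{p-1}^{(1)}\equiv 0\bmod p^2$ by Step 2. By Newton's identity,
$$\h_{p-1}(2)=\tfrac12\big((\h_{p-1}^{(1)})^2-\h_{p-1}^{(2)}\big),$$
which is $\equiv 0\bmod p$ by Steps 1 and 2. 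Therefore $p\,\h_{p-1}(1)\equiv 0$ and $p^2\,\h_{p-1}(2)\equiv 0\bmod p^3$, giving $\binom{2p-1}{p-1}\equiv 1\bmod p^3$.

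The only delicate point is Step 2. The pairing trick is what turns a statement modulo $p^2$ into the statement modulo $p$ of Step 1. One must also track that $2$ and $6$ are invertible, which is where $p\geq 5$ is used.
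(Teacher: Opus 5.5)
Your proof is correct. The paper only quotes this theorem, but your three steps are exactly what its own tools give: Lemma~\ref{valuation} with $k=2$ and then $k=1$ (pairing $i$ with $p-i$ and feeding in $\h^{(2)}_{p-1}\equiv 0\bmod p$) is your Steps 1--2, and Remark~\ref{rk alpha(1,p-1)} with the expansion $\alpha(1,p-1)=\sum_k p^k\h_{p-1}(k)$ is your Step 3. The only difference is cosmetic: in Step 1 you invert and use the closed form for $\sum k^2$, whereas the paper multiplies by a unit $c$ with $c^2\not\equiv 1\bmod p$; that multiplier argument, not the closed form, is also what justifies your unused aside about general $l$.
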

 \begin{rk}\label{rk alpha(1,p-1)}
      We remark that for any prime $p$, ${{2p-1}\choose p-1} = \alpha(1,p-1)$, which follows from the observation that $(2p-1)! = p!(p-1)!\alpha(1,p-1)$.
  \end{rk}
We recall that, for each non-negative integer $n$, the $n$-th Bernoulli polynomial is defined as follows
$$B_n(x) = \sum_{0\leq k\leq n}\binom{n}{k}B_{n-k}x^k,$$
where $B_{n-k}$ denotes the $(n-k)$-th Bernoulli number. One of the important properties of the Bernoulli polynomials is the symmetry $B_n(1-x) = (-1)^nB_n(x)$, and $B_n = 0$ for all odd integers $n\geq 3$.
\begin{theorem}[\cite{Kenneth-Rosen} - Ch. $15$, \S $1$, Theorem $1$]\label{bernoulli poly}
For all positive integers $n$ and $m$, 
\begin{align*}
    \sum_{1\leq k\leq n-1}k^m = \frac{1}{m+1}(B_{m+1}(n)- B_{m+1}).
\end{align*}    
\end{theorem}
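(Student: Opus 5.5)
This is the classical Faulhaber formula. I would prove it by telescoping, using the difference equation satisfied by the Bernoulli polynomials:
$$B_{m+1}(x+1)-B_{m+1}(x)=(m+1)x^m .$$
I take the Bernoulli numbers to be defined by $\frac{t}{e^t-1}=\sum_{j\geq0}B_j\frac{t^j}{j!}$, so that $B_1=-\tfrac12$. From the definition $B_n(x)=\sum_{k}\binom{n}{k}B_{n-k}x^k$ given above, multiplying the two exponential series yields the generating function
$$\frac{te^{xt}}{e^t-1}=\sum_{n\geq0}B_n(x)\frac{t^n}{n!}.$$
This holds because the coefficient of $t^n/n!$ in $\frac{t}{e^t-1}\cdot e^{xt}$ is exactly the binomial convolution $\sum_k\binom nk B_{n-k}x^k$.

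Next I would compute the difference of generating functions at $x+1$ and $x$:
$$\frac{t\,(e^{(x+1)t}-e^{xt})}{e^t-1}=te^{xt}=\sum_{m\geq0}x^m\frac{t^{m+1}}{m!}.$$
Comparing coefficients of $t^{m+1}/(m+1)!$ on both sides gives $B_{m+1}(x+1)-B_{m+1}(x)=(m+1)x^m$ for every $m\geq0$, as a polynomial identity in $x$.

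Finally, I would set $x=k$ and sum over $k=0,1,\ldots,n-1$. The left side telescopes to $B_{m+1}(n)-B_{m+1}(0)$, and $B_{m+1}(0)=B_{m+1}$ directly from the definition, since only the $k=0$ term survives. On the right side, the $k=0$ term is $0^m=0$ because $m\geq1$. Hence
$$(m+1)\sum_{1\leq k\leq n-1}k^m=B_{m+1}(n)-B_{m+1},$$
and dividing by $m+1$ gives the statement.

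The only point requiring care is the sign convention $B_1=-\tfrac12$, which is built into the generating function used here. I do not expect any step to be a genuine obstacle: the identification of the generating function is a routine product of power series, and the rest is a telescoping sum.
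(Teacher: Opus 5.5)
Your proof is correct. The paper does not prove this statement; it quotes it from Ireland--Rosen, and your route differs slightly from the standard argument given there.

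The textbook argument sums the exponentials directly:
\[
\sum_{k=0}^{n-1}e^{kt}=\frac{e^{nt}-1}{e^t-1}=\frac{e^{nt}-1}{t}\cdot\frac{t}{e^t-1}.
\]
Comparing coefficients of $t^m$ gives $(m+1)\sum_{k=1}^{n-1}k^m=\sum_{j=0}^{m}\binom{m+1}{j}B_j\,n^{m+1-j}$, which is then recognised as $B_{m+1}(n)-B_{m+1}$.

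You instead first derive the polynomial identity $B_{m+1}(x+1)-B_{m+1}(x)=(m+1)x^m$ in $\mathbb{Q}[x]$ from the generating function $\frac{te^{xt}}{e^t-1}$, and then telescope.

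\begin{itemize}
\item The textbook route is a single coefficient comparison and never needs the generating function in the variable $x$.
\item Yours costs one extra step, but it produces the difference equation as a reusable identity. That identity gives $\sum_{k=a}^{b-1}k^m$ over any range at once.
\end{itemize}

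Your care about the convention $B_1=-\tfrac12$ is well placed. With $B_1=+\tfrac12$ and the same defining formula for $B_n(x)$, the right-hand side would instead equal $\sum_{k=1}^{n}k^m$. The paper implicitly uses your convention elsewhere:
\begin{itemize}
\item in the symmetry $B_n(1-x)=(-1)^nB_n(x)$;
\item in Remark \ref{rk_h_bernoulli}, where summing $i^{a_s-1}$ over $1\leq i\leq c$ is written as $\bigl(B_{a_s}(c+1)-B_{a_s}\bigr)/a_s$.
\end{itemize}
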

\begin{theorem}[\cite{Kenneth-Rosen} - Ch. $15$, \S $1$, Theorem $3$]\label{berno p-adic int}
    For any positive integer $m$, $B_{2m}$ is a $p$-adic integer if $(p-1)\nmid 2m$. 
\end{theorem}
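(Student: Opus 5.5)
The plan is a self-contained induction on the index of the Bernoulli number, using the power-sum formula of Theorem \ref{bernoulli poly} with $n=p$; this is essentially the easy half of the von Staudt--Clausen theorem. One could instead simply quote von Staudt--Clausen, which says that $B_{2m}+\sum_{(q-1)\mid 2m}1/q\in\mathbb Z$ with $q$ running over primes. If $(p-1)\nmid 2m$, the prime $p$ does not occur in that sum, so $B_{2m}\in\mathbb Z_p$ at once. For a direct argument I would prove the stronger statement
\[
pB_j\in\mathbb Z_p \text{ for all } j\geq 0,\qquad pB_j\equiv \sum_{k=1}^{p-1}k^j \bmod p \text{ for } j\geq 1,
\]
by strong induction on $j$. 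The base cases are $B_0=1$ and $B_1=-1/2$, which are fine since $p$ is odd.

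\textbf{Step 1: expand the power sum.} Take $n=p$ and exponent $j$ in Theorem \ref{bernoulli poly}. Using $B_{j+1}(x)=\sum_i\binom{j+1}{i}B_{i}x^{j+1-i}$, the constant term cancels against $B_{j+1}$. Since $\frac{1}{j+1}\binom{j+1}{i}=\frac{1}{j+1-i}\binom{j}{i}$, we get
\[
\sum_{k=1}^{p-1}k^j=\sum_{i=0}^{j}\binom{j}{i}\,B_i\,\frac{p^{j+1-i}}{j+1-i}
= pB_j+\sum_{i=0}^{j-1}\binom{j}{i}\,(pB_i)\,\frac{p^{\,j-i}}{j+1-i}.
\]

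\textbf{Step 2: the error terms vanish mod $p$.} By the induction hypothesis each $pB_i$ with $i<j$ lies in $\mathbb Z_p$. Put $e=j-i\geq1$. It suffices to check that $v_p(e+1)\leq e-1$, so that $p^{e}/(e+1)\in p\mathbb Z_p$.
\begin{itemize}
\item For $e=1$, this holds because $p\neq 2$.
\item For $e\geq 2$, it follows from $p^{e-1}\geq 5^{e-1}>e+1$.
\end{itemize}
This $p$-adic valuation estimate is the only delicate point, and it is where $p$ odd is used. Hence $pB_j\equiv\sum_{k=1}^{p-1}k^j\bmod p$, and in particular $pB_j\in\mathbb Z_p$, which completes the induction.

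\textbf{Step 3: evaluate the power sum.} Let $g$ be a primitive root mod $p$. Then
\[
\sum_{k=1}^{p-1}k^{2m}\equiv\sum_{t=0}^{p-2}g^{2mt}\pmod p .
\]
If $(p-1)\nmid 2m$ then $g^{2m}\not\equiv1$, and this geometric sum equals $(g^{2m(p-1)}-1)/(g^{2m}-1)\equiv 0\bmod p$. Step 2 then gives $pB_{2m}\equiv0\bmod p$, that is, $B_{2m}\in\mathbb Z_p$.

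No step is a real obstacle. The only place needing care is the valuation bound in Step 2, and it is elementary.
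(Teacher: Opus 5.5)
Your proof is correct. The paper gives no proof of this statement: it quotes it from Ireland--Rosen and uses it only as a black box, essentially to know that $B_{p-3}$ is $p$-integral in the centroid and edge-midpoint computations. So there is no argument in the paper to compare yours against.

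What you wrote is the standard proof of the $p$-integrality half of von Staudt--Clausen. You induct on $j$ through the power-sum formula of Theorem \ref{bernoulli poly} to get $pB_j\equiv\sum_{k=1}^{p-1}k^j \pmod p$. You then evaluate that sum with a primitive root.

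One small remark: invoking $p\geq 5$ in Step 2 costs you nothing. For $p=2,3$ one has $(p-1)\mid 2m$ for every $m$, so the statement is vacuous there. In any case, all Step 2 really needs is $e+1<p^{e}$, which holds for every odd $p$ and every $e\geq 1$.
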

\begin{theorem}[Lehmer \cite{lehmer}]\label{Lehmer}
Let $p\geq 5$ be a prime and let $q_p(2)=\dfrac{2^{p-1}-1}{p}$ denote the Fermat quotient of $p$ to base $2$. Then
\[
    \mathcal{H}_{(p-1)/2}(1)\;\equiv\;-2\,q_p(2)+p\,q_p(2)^{2} \pmod{p^{2}} .
\]
\end{theorem}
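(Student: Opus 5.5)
The plan is to derive the congruence from the binomial expansion of $2^p=(1+1)^p$, taken modulo $p^3$, together with Wolstenholme's theorem (Theorem \ref{Wolstenholme}). Throughout, write $e=(p-1)/2$ and $q=q_p(2)$, so that $\mathcal{H}_m(1)=\h_m^{(1)}$. For $1\le k\le p-1$ we have the exact identity
\[
\binom{p}{k}=\frac{p}{k}(-1)^{k-1}\prod_{j=1}^{k-1}\Bigl(1-\frac{p}{j}\Bigr).
\]
Every $j$ in the product is a $p$-adic unit. Expanding the product to first order therefore gives
\[
\binom{p}{k}\equiv(-1)^{k-1}\frac{p}{k}\bigl(1-p\,\h^{(1)}_{k-1}\bigr)\bmod p^3 .
\]
Summing over $1\le k\le p-1$ gives $2^p-2=2pq$ on the left. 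Dividing by $p$ yields
\[
2q\equiv A-pS\bmod p^2,\qquad A=\sum_{k=1}^{p-1}\frac{(-1)^{k-1}}{k},\quad S=\sum_{k=1}^{p-1}\frac{(-1)^{k-1}\h^{(1)}_{k-1}}{k}.
\]

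Next we identify $A$. The even terms of $\h^{(1)}_{p-1}$ sum to $\sum_{j\le e}\frac{1}{2j}=\tfrac12\h^{(1)}_e$. Hence $A=\h^{(1)}_{p-1}-2\cdot\tfrac12\h^{(1)}_e$. By Theorem \ref{Wolstenholme}, $\h^{(1)}_{p-1}\equiv0\bmod p^2$, so $A\equiv-\h^{(1)}_e\bmod p^2$. This gives
\[
\h^{(1)}_e\equiv-2q-pS\bmod p^2 .
\]
So the theorem reduces to the single congruence $S\equiv-q^2\bmod p$. Only $S\bmod p$ is needed, since it is multiplied by $p$.

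The main obstacle is evaluating $S$ modulo $p$. The first step is the case of the reduction at first order. Reducing the display above modulo $p$ already gives $\h^{(1)}_e\equiv-2q\bmod p$. Equivalently, $A\equiv 2q\bmod p$, and $E:=\sum_{j\le e}1/(2j)\equiv -q\bmod p$ for the even-index part.

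To get $S\bmod p$ I would express it through squares of these first-order sums. Split $S$ according to the parities of $j<k$ and apply the reflection $k\mapsto p-k$. Modulo $p$ this reflection gives $\h^{(1)}_{p-1-m}\equiv\h^{(1)}_m$, reverses parity, and negates $1/k$. Writing $O$ and $E$ for the odd- and even-index parts of $\h^{(1)}_{p-1}$, this should express $S$ as a combination of $E^2$, $O^2$, $EO$ and the order-two sums $\sum 1/k^2$ over each parity class. Those order-two sums vanish modulo $p$: the full one by $\h^{(2)}_{p-1}\equiv0\bmod p$ (Theorem \ref{Wolstenholme}), and the even one because it equals $\tfrac14\h^{(2)}_e\equiv\tfrac18\h^{(2)}_{p-1}\equiv0$. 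Since $O\equiv-E\equiv q$, the expected outcome is $S\equiv-q^2$.

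If this bookkeeping becomes unwieldy, the fallback is a generating-function argument. Reduce the polynomial identity $\bigl((1+x)^p-1-x^p\bigr)/p$ modulo $p^2$ and evaluate it at $x=1$ and $x=-1$. This would directly produce the second-order coefficient $\sum(-1)^{k-1}\h^{(1)}_{k-1}/k$ in terms of $q$. Once $S\equiv-q^2\bmod p$ is established, substituting into $\h^{(1)}_e\equiv-2q-pS$ gives $\h^{(1)}_e\equiv-2q+pq^2\bmod p^2$, as claimed.
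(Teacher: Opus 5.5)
Your reduction is sound: the expansion $\binom pk\equiv(-1)^{k-1}\frac pk\bigl(1-p\,\h^{(1)}_{k-1}\bigr)\bmod p^3$, the identity $A=\h^{(1)}_{p-1}-\h^{(1)}_e$ and Wolstenholme's theorem do give $\h^{(1)}_e\equiv-2q-pS\pmod{p^2}$. So the theorem is equivalent to $S\equiv-q^2\pmod p$. (The paper gives no proof of this statement; it simply quotes Lehmer, so your argument has to stand on its own.) The gap is that this congruence, which is the entire second-order content of the theorem, is never proved, and neither mechanism you propose can prove it.

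Write $S=\sum_{1\le j<k\le p-1}(-1)^{k-1}/(jk)=P_{oo}+P_{eo}-P_{ee}-P_{oe}$. Here $P_{xy}$ is the sum of $1/(jk)$ over $j<k$ with $j$ of parity $x$ and $k$ of parity $y$. The reflection $(j,k)\mapsto(p-k,p-j)$ exchanges $P_{oo}$ and $P_{ee}$ modulo $p$. But it maps the index set of $P_{eo}$ onto itself, and that of $P_{oe}$ onto itself, preserving summands modulo $p$. So all it yields is $S\equiv P_{eo}-P_{oe}\pmod p$. Applied directly to $S=\sum_k(-1)^{k-1}\h^{(1)}_{k-1}/k$, it returns the tautology $S\equiv S+\sum_k(-1)^{k-1}/k^2$. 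Your symmetric data only give $P_{eo}+P_{oe}=EO\equiv-q^2$. The antisymmetric part $P_{eo}-P_{oe}$ depends on how evens and odds interleave in $\{1,\dots,p-1\}$, which neither the reflection nor the parity-class power sums detect.

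The fallback fails too. At $x=1$, the expansion of $((1+x)^p-1-x^p)/p$ modulo $p^2$ is just your equation $2q\equiv A-pS$ again, with two unknowns $A$ and $S$. At $x=-1$ the left side is $0$, and you only get $\h^{(1)}_{p-1}\equiv p\,\h_{p-1}(2)\pmod{p^2}$, which involves neither $q$ nor $S$.

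What is missing is a genuinely second-order input. Inside your framework you can supply it by proving $P_{oe}\equiv0\pmod p$. An odd index satisfies $2m'-1\equiv-2(e+1-m')$, and the condition $2m'-1<2m$ becomes $u+m>e$ with $u=e+1-m'$. Hence $P_{oe}\equiv-\frac14\sum_{1\le u,m\le e,\ u+m>e}\frac1{um}$. The partial fraction $\frac1{um}=\frac1{u+m}\bigl(\frac1u+\frac1m\bigr)$ gives $\sum_{u,m\ge1,\ u+m\le e}\frac1{um}=2\h_e(2)$. Subtracting this from $\sum_{1\le u,m\le e}\frac1{um}=(\h^{(1)}_e)^2$ shows that the sum over $u+m>e$ equals exactly $\h^{(2)}_e\equiv0\pmod p$. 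Therefore $S\equiv EO-2P_{oe}\equiv-q^2$.

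A shorter route bypasses $S$ entirely. Compute $(p-1)!$ once by pairing $k$ with $p-k$, and once by splitting $\{1,\dots,p-1\}$ into $\{2k\}$ and $\{p-2k\}$ for $1\le k\le e$. This gives the exact identity $\prod_{k=1}^e\bigl(1-\frac pk\bigr)=(1+pq)\prod_{k=1}^e\bigl(1-\frac p{2k}\bigr)$. Comparing expansions modulo $p^2$ gives $\h^{(1)}_e\equiv-2q\pmod p$, hence $\h_e(2)=\frac12\bigl((\h^{(1)}_e)^2-\h^{(2)}_e\bigr)\equiv2q^2\pmod p$. Comparing modulo $p^3$ then yields $\h^{(1)}_e\equiv-2q+pq^2\pmod{p^2}$.
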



\section{Properties of $\alpha(x,m)$ and Harmonic sums}\label{harmonic sum sec}

\begin{lemma}\label{alpha_factor}
For $m\in\p$, we have
\begin{enumerate}
  \item $\alpha(-1-x,p-1)=\alpha(x,p-1)$;
  \item  $\alpha(x,p-1-m)\,\alpha(-1-x,m)=\alpha(x,p-1)\,\alpha(-1,m)$.
\end{enumerate}
\end{lemma}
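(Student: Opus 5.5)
Both identities are polynomial identities in $\mathbb{Z}_p[x]$. I would prove them by writing each $\alpha$ as a product of linear factors and reindexing. The key observation is that
$$1+\frac{px}{i}=\frac{i+px}{i},$$
so for $m\in\p$ we have $\alpha(x,m)=\frac{1}{m!}\prod_{i=1}^{m}(i+px)$.

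\textbf{Part (1).} Substituting $-1-x$ gives
$$\alpha(-1-x,p-1)=\frac{1}{(p-1)!}\prod_{i=1}^{p-1}\bigl(i-p-px\bigr).$$
I would reindex by $j=p-i$, which also runs over $1,\ldots,p-1$. Then $i-p-px=-(j+px)$, so the product equals $(-1)^{p-1}\prod_{j=1}^{p-1}(j+px)$. Since $p$ is odd, $(-1)^{p-1}=1$, and this is exactly $\alpha(x,p-1)$.

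\textbf{Part (2).} The same reindexing applies to a partial product. We have
$$\alpha(-1-x,m)=\frac{1}{m!}\prod_{i=1}^{m}\bigl(i-p-px\bigr)=\frac{(-1)^m}{m!}\prod_{j=p-m}^{p-1}(j+px).$$
Multiplying by $\alpha(x,p-1-m)=\frac{1}{(p-1-m)!}\prod_{j=1}^{p-1-m}(j+px)$ completes the product over $j=1,\ldots,p-1$. This gives
$$\alpha(x,p-1-m)\,\alpha(-1-x,m)=\frac{(-1)^m\,(p-1)!}{m!\,(p-1-m)!}\,\alpha(x,p-1)=(-1)^m\binom{p-1}{m}\alpha(x,p-1).$$
It remains to identify the constant. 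Setting $x=0$ in the displayed formula for $\alpha(-1-x,m)$ gives
$$\alpha(-1,m)=\frac{(-1)^m}{m!}\prod_{j=p-m}^{p-1}j=(-1)^m\binom{p-1}{m},$$
which is exactly the constant found above, so (2) follows.

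Alternatively, (2) can be seen by substituting $x=0$ into the identity itself, since $\alpha(0,\cdot)=1$ and $\alpha(-1,\cdot)$ is the only remaining constant. The edge cases $m=0$ and $m=p-1$ follow from the convention $\alpha(x,0)=1$ together with part (1). There is no real obstacle beyond careful bookkeeping of the index ranges and the sign $(-1)^m$.
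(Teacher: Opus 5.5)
Your proof is correct and follows essentially the same route as the paper. Part (1) is the reindexing $i\mapsto p-i$, and in part (2) you split the full product over $1,\dots,p-1$ into the first $p-1-m$ factors and the last $m$ factors, then reindex the tail by $j=p-i$. The only difference is cosmetic: you clear denominators and identify the constant as $(-1)^m\binom{p-1}{m}=\alpha(-1,m)$, whereas the paper writes each tail factor as $\bigl(1-\frac{p(1+x)}{j}\bigr)/\bigl(1-\frac{p}{j}\bigr)$, so the ratio $\alpha(-1-x,m)/\alpha(-1,m)$ appears directly.
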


\begin{proof}
$(1)$. Since $i\mapsto p-i$ permutes $\{1,\dots,p-1\}$ we get
$$\alpha(-1-x,p-1)=\prod_{i=1}^{p-1}\frac{i-p-px}{i}=\prod_{i=1}^{p-1}\frac{-(i+px)}{p-i}
=(-1)^{p-1}\prod_{i=1}^{p-1}\frac{i+px}{i}=\alpha(x,p-1).$$
$(2)$, we note that 
$$\alpha(x,p-1)=\alpha(x,p-1-m)\prod_{i=p-m}^{p-1}\left(1+\frac{px}{i}\right).$$
Writing $i=p-j$ with $1\leq j\leq m$, we have
$$1+\frac{px}{p-j}=\frac{p(1+x)-j}{p-j}=\frac{1-\frac{p(1+x)}{j}}{1-\frac{p}{j}},$$
so the last product equals $\alpha(-1-x,m)/\alpha(-1,m)$. Multiplying by $\alpha(-1,m)$ gives
$(2)$.
\end{proof}

\begin{lemma}\label{zero_phi_one}
Let $\Phi\in(\mathbb{Z}/p^N\mathbb{Z})[x]$ with $\deg\Phi\leq d\leq p-1$. If $\Phi(m)=0$ for
all integers $m\geq0$ lying in at least $d+1$ distinct residue classes modulo $p$, then
$\Phi=0$.
\end{lemma}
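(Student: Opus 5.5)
We argue by induction on $N$, the exponent of the modulus. The point is that over $\mathbb{Z}/p\mathbb{Z}$ the statement is just the fact that a nonzero polynomial of degree at most $d$ has at most $d$ roots. We then lift this to $\mathbb{Z}/p^N\mathbb{Z}$ by dividing out a factor of $p$.

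For the base case $N=1$, reduce $\Phi$ to $\overline{\Phi}\in\mathbb{F}_p[x]$, with $\deg\overline{\Phi}\leq d$. Each integer $m$ with $\Phi(m)=0$ gives a root $\overline{m}\in\mathbb{F}_p$. Since these $m$ lie in at least $d+1$ distinct residue classes, $\overline{\Phi}$ has at least $d+1$ distinct roots in the field $\mathbb{F}_p$. Here $d+1\leq p$, so these are genuinely distinct elements of $\mathbb{F}_p$. Hence $\overline{\Phi}=0$, i.e.\ $\Phi=0$ in $(\mathbb{Z}/p\mathbb{Z})[x]$.

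For the inductive step, let $N\geq 2$ and lift $\Phi$ to $\tilde\Phi\in\mathbb{Z}[x]$ with $\deg\tilde\Phi\leq d$. Reducing modulo $p$ and applying the case $N=1$ to the same integers $m$, we get $\tilde\Phi\equiv 0\bmod p$ coefficientwise. So $\tilde\Phi=p\Psi$ with $\Psi\in\mathbb{Z}[x]$ and $\deg\Psi\leq d$.

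Now $p\Psi(m)\equiv 0\bmod p^N$ gives $\Psi(m)\equiv 0\bmod p^{N-1}$ for all the given $m$. By the induction hypothesis applied to the image of $\Psi$ in $(\mathbb{Z}/p^{N-1}\mathbb{Z})[x]$, every coefficient of $\Psi$ is divisible by $p^{N-1}$. Therefore every coefficient of $\tilde\Phi$ is divisible by $p^N$, so $\Phi=0$.

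There is no real obstacle. The only care needed is in the division by $p$: it must be done in $\mathbb{Z}[x]$ on a lift, not in the ring $\mathbb{Z}/p^N\mathbb{Z}$, where $p$ is not invertible. The hypothesis $d\leq p-1$ is used only so that $d+1$ distinct residue classes exist in $\mathbb{F}_p$.

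Alternatively, one can avoid induction. Choose representatives $m_0,\dots,m_d$ in distinct classes and use the Vandermonde matrix $V=(m_i^j)$. Its determinant $\prod_{i<j}(m_j-m_i)$ is a unit modulo $p$, so $V$ is invertible over $\mathbb{Z}/p^N\mathbb{Z}$. The system $V\cdot(\text{coefficients of }\Phi)=0$ then forces all coefficients to vanish.
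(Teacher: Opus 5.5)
Your induction on $N$ is correct and is essentially the paper's own proof: reduce modulo $p$, use root counting over $\mathbb{F}_p$, write the polynomial as $p\Psi$, and apply the induction hypothesis to $\Psi$ modulo $p^{N-1}$; lifting to $\mathbb{Z}[x]$ just makes the division by $p$ more explicit than the paper does. Your Vandermonde alternative is also valid, since $\prod_{i<j}(m_j-m_i)$ is prime to $p$ and hence invertible modulo $p^N$.
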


\begin{proof}
We prove the result by induction on $N$. Reducing modulo $p$, $\Phi$ becomes a polynomial over $\mathbb Z/p\mathbb Z$ of degree at most $d$ with at least $d+1$ roots, hence zero. If $N=1$, we are done. Otherwise, every coefficient of $\Phi$ is divisible by $p$, so $\Phi=p\Psi$ for some $\Psi\in(\mathbb{Z}/p^{N-1}\mathbb{Z})[x]$ with $\deg\Psi\leq d$, and $\Psi(m)=0$ in $\mathbb{Z}/p^{N-1}\mathbb{Z}$ for the same integers $m$. By induction
$\Psi=0$, hence $\Phi=0$.
\end{proof}

\begin{lemma}\label{alpha_odd}
    Let $n < p$ and $r\in\p$. Assume $\alpha(x,p-1)\alpha(x+1,r)\equiv \alpha(1,r)\alpha(x,r)\bmod p^n$. Then
    $$\alpha(x,r)\alpha(-x, r)\equiv 1\bmod p^n.$$
\end{lemma}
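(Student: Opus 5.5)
The plan is to show that $\beta(x):=\alpha(x,r)\alpha(-x,r)$ is $\equiv 1 \bmod p^n$ by checking it at every nonnegative integer and then invoking Lemma \ref{zero_phi_one}. The values are handled by a telescoping argument that combines the hypothesis with its image under $x\mapsto -1-x$.

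First, every $\alpha(x,m)$ with $m\leq p-1$ has constant term $1$, and the coefficient of $x^k$ is $p^k\h_m(k)$, so it is divisible by $p^k$. Hence modulo $p^n$ each such polynomial, and also $\beta(x)-1$, has degree at most $n-1$. Second, each value $\alpha(a,m)$ with $a\in\mathbb Z$ is a unit in $\mathbb{Z}/p^n\mathbb{Z}$, as noted in Section 2, so we may divide by such values freely.

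\textbf{Step 1: the shifted hypothesis.} Substitute $x\mapsto -1-x$ in the hypothesis. This substitution preserves congruence of polynomials mod $p^n$, since the coefficients of the new polynomial are $\mathbb{Z}$-linear combinations of the old ones. By Lemma \ref{alpha_factor}(1), $\alpha(-1-x,p-1)=\alpha(x,p-1)$, so we obtain
$$\alpha(x,p-1)\,\alpha(-x,r)\equiv \alpha(1,r)\,\alpha(-1-x,r)\bmod p^n .$$
Evaluating the original hypothesis and this new congruence at an integer $m\geq 0$ gives two relations in $\mathbb{Z}/p^n\mathbb{Z}$:
$$\alpha(m+1,r)=\frac{\alpha(1,r)\,\alpha(m,r)}{\alpha(m,p-1)},\qquad \alpha(-m-1,r)=\frac{\alpha(m,p-1)\,\alpha(-m,r)}{\alpha(1,r)}.$$

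\textbf{Step 2: telescoping.} Multiplying the two relations, the factors $\alpha(1,r)$ and $\alpha(m,p-1)$ cancel, and we get $\beta(m+1)=\beta(m)$ in $\mathbb{Z}/p^n\mathbb{Z}$. Since $\beta(0)=\alpha(0,r)^2=1$, induction gives $\beta(m)\equiv 1 \bmod p^n$ for all integers $m\geq 0$.

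\textbf{Step 3: conclusion.} Apply Lemma \ref{zero_phi_one} to $\Phi=\beta-1$, viewed in $(\mathbb{Z}/p^n\mathbb{Z})[x]$, with $d=n-1\leq p-1$; this is where the hypothesis $n<p$ is used. The nonnegative integers cover all $p\geq n=d+1$ residue classes modulo $p$. Hence $\Phi=0$, that is, $\alpha(x,r)\alpha(-x,r)\equiv 1\bmod p^n$.

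The only delicate points are checking that the substitution $x\mapsto -1-x$ is legitimate for congruences of polynomials, and confirming the degree bound mod $p^n$. Both are routine, so I expect no serious obstacle; the key idea is the telescoping in Step 2.
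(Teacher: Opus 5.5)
Your proof is correct and follows essentially the same route as the paper: substitute $x\mapsto -1-x$ using Lemma \ref{alpha_factor}(1), combine with the original hypothesis to get $\beta(m+1)\equiv\beta(m)$, telescope from $\beta(0)=1$, and conclude with Lemma \ref{zero_phi_one}. The only difference is cosmetic: the paper obtains $\beta(x)\equiv\beta(x+1)$ as a polynomial congruence by cross-multiplying and subtracting before evaluating, whereas you evaluate first and divide by the unit values $\alpha(m,p-1)$ and $\alpha(1,r)$, which makes explicit the cancellation of the unit $\alpha(1,r)$ that the paper leaves implicit.
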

\begin{proof}
  By hypothesis, we have
  \begin{align}\label{eqn 1 alpha odd}
    \alpha(x,p-1)\alpha(x+1,r)\equiv \alpha(1,r)\alpha(x,r)\bmod p^n.
\end{align}
Substituting $x\mapsto-1-x$ in the above equation and using $\alpha(-1-x,p-1)=\alpha(x,p-1)$ from
Lemma \ref{alpha_factor}, we get
$$\alpha(x,p-1)\,\alpha(-x,r)\equiv\alpha(1,r)\,\alpha(-1-x,r)\bmod p^n .$$
We multiply \eqref{eqn 1 alpha odd} by $\alpha(-x,r)$ and the above congruence by $\alpha(x+1,r)$ and subtract,we get
$$\alpha(x,r)\,\alpha(-x,r)\equiv\alpha(x+1,r)\,\alpha(-(x+1),r)\bmod p^n .$$
This implies that $\alpha(a,r)\alpha(-a,r)\equiv\alpha(a+1,r)\alpha(-(a+1),r)\bmod p^n$ for all integers
$a\geq0$ which further implies that $\alpha(a,r)\alpha(-a,r)\equiv\alpha(0,r)^2 \equiv 1\bmod p^n$. Since the coefficients of $x^k$ in $\alpha(x,r)$ and $\alpha(-x,r)$ are divisible by
$p^k$, the reduction modulo $p^n$ of $\alpha(x,r)\alpha(-x,r)-1$ has degree at most
$n-1<p$. Hence, by Lemma \ref{zero_phi_one} we get the desired result $\alpha(x,r)\,\alpha(-x,r)\equiv1\bmod p^n$.
\end{proof}

\begin{lemma}\label{shift_power}
Let $r\in\p$ and $1\leq n<p$. Then the following are equivalent
\begin{enumerate}
    \item $\alpha(x,p-1)\,\alpha(x+1,r)\equiv\alpha(1,r)\,\alpha(x,r)\bmod p^n$;\vspace{.5em}
    \item $\h^{(i)}_{p-1}\equiv\sum_{t=1}^{n-i-1}(-1)^{t+1}{i+t-1\choose t}\,p^t\,\h^{(i+t)}_r\bmod p^{n-i}
\quad\text{for all } 1\leq i\leq n-2.$
\end{enumerate}
\end{lemma}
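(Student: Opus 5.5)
Throughout I take logarithms, which are legitimate because every $\alpha$ is $\equiv1\bmod p$. Write
$$L_m(x):=\log\alpha(x,m)=\sum_{i=1}^m\log\Bigl(1+\frac{px}{i}\Bigr)=\sum_{k\geq1}\frac{(-1)^{k+1}p^kx^k}{k}\,\h^{(k)}_m .$$
Since $n<p$, the factor $1/k$ is a $p$-adic unit for $k\leq n-1$, and the terms with $k\geq n$ vanish modulo $p^n$. So modulo $p^n$, $L_m(x)$ is a polynomial of degree at most $n-1$ whose $x^k$-coefficient is divisible by $p^k$. Moreover, $\exp$ and $\log$ are mutually inverse bijections between $1+p\mathbb Z_p[x]$ and $p\mathbb Z_p[x]$ preserving congruences modulo $p^n$. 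One caveat: the series $\exp(f)$ involves $f^j/j!$, and for $j\geq p$ the denominator loses powers of $p$. This is harmless because each coefficient carries enough powers of $p$ (the coefficient of $x^k$ in $f$ has valuation $\geq\max(1,k)$), so the terms with $j\geq n$ contribute $0\bmod p^n$. Hence condition (1) is equivalent to the additive congruence
$$L_{p-1}(x)+L_r(x+1)\equiv L_r(1)+L_r(x)\bmod p^n .$$

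Next I expand both sides in powers of $x$. We have $L_r(x+1)=\sum_k\frac{(-1)^{k+1}p^k}{k}\h_r^{(k)}(x+1)^k$, so for $i\geq1$ the coefficient of $x^i$ in $L_r(x+1)-L_r(x)$ is
$$\sum_{k>i}\frac{(-1)^{k+1}p^k}{k}\binom{k}{i}\h^{(k)}_r .$$
The constant terms agree automatically, since $L_r(1)$ is the constant term of $L_r(x+1)$ and $L_{p-1}(0)=0$. Comparing $x^i$-coefficients therefore reduces condition (1) to
$$\frac{(-1)^{i+1}p^i}{i}\h^{(i)}_{p-1}\equiv -\sum_{k>i}\frac{(-1)^{k+1}p^k}{k}\binom{k}{i}\h^{(k)}_r\bmod p^n\qquad(1\leq i\leq n-1).$$
Put $k=i+t$ and use $\frac{i}{k}\binom{k}{i}=\binom{i+t-1}{t}$. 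Multiplying by the unit $(-1)^{i+1}i$ and dividing by $p^i$ gives
$$\h^{(i)}_{p-1}\equiv\sum_{t\geq1}(-1)^{t+1}\binom{i+t-1}{t}p^t\h^{(i+t)}_r\bmod p^{n-i}.$$
Terms with $t\geq n-i$ vanish modulo $p^{n-i}$ (using $\h^{(k)}_r\in\mathbb Z_p$), so the sum truncates at $t=n-i-1$, which is exactly (2). For $i=n-1$ the condition reads $\h^{(n-1)}_{p-1}\equiv0\bmod p$, which holds because $\h^{(n-1)}_{p-1}\equiv\sum_{j=1}^{p-1} j^{p-n}\equiv0\bmod p$ when $p-1\nmid p-n$, i.e.\ when $n\neq 1$. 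For $n=1$ the index range is empty and both (1) and (2) hold trivially. So only the range $1\leq i\leq n-2$ appears in (2).

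Both directions follow, because each step above is an equivalence: the passage to logarithms, the coefficient comparison, and division by units and $p^i$. The main obstacle is making the $\log/\exp$ transfer rigorous modulo $p^n$ with only $n<p$; the delicate point is controlling the denominators in $\exp$ for large $j$ as described above. An alternative that avoids $\exp$ entirely is to argue directly with $\log$. Injectivity of $\log$ modulo $p^n$ on $1+p\mathbb Z_p[x]$ restricted to such "$p$-adically graded" polynomials follows by peeling off the lowest-valuation discrepancy: if $u\equiv1\bmod p^j$ with $j<n$, then $\log u\equiv u-1\bmod p^{j+1}$.
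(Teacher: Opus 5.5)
Your proof is correct and shares the paper's skeleton: linearize with the $p$-adic logarithm, expand $L_r(x+1)$ binomially, compare $x^i$-coefficients via $\frac ik\binom ki=\binom{i+t-1}{t}$, and discard $i=n-1$ using $\h^{(n-1)}_{p-1}\equiv0\bmod p$. The difference is the level at which you take the logarithm.

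The paper never takes the logarithm of a polynomial. It uses Lemma~\ref{zero_phi_one} to replace (1) by numerical congruences at all integers $a\geq0$. It then applies the textbook isomorphism $\log\colon 1+p\mathbb{Z}_p\to p\mathbb{Z}_p$ to these numbers and truncates to a polynomial $\Psi$ of degree $\leq n-1$, using $v_p(e_l(m))\geq l-v_p(l)\geq n$ for $l\geq n$. Finally it invokes Lemma~\ref{zero_phi_one} again to get $\Psi\equiv0\bmod p^n$.

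You bypass both interpolation steps, but then you need $\log$ and $\exp$ on polynomials, and your stated justification is not literally right. They are not bijections between $1+p\mathbb{Z}_p[x]$ and $p\mathbb{Z}_p[x]$, since already $\log(1+px)$ is an infinite series. The clean fix is to work in $(\mathbb{Z}/p^n\mathbb{Z})[x]$. There every $N\in p(\mathbb{Z}/p^n\mathbb{Z})[x]$ satisfies $N^n=0$, so $\log(1+N)=\sum_{j<n}(-1)^{j+1}N^j/j$ and $\exp(N)=\sum_{j<n}N^j/j!$ have only unit denominators because $n<p$. Multiplicativity of $\log$, together with $\exp\circ\log=\mathrm{id}$ and $\log\circ\exp=\mathrm{id}$, then follows by specializing the corresponding identities in $\mathbb{Z}_{(p)}[S,T]/(S,T)^n$.

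In that form the high-order terms never appear. You therefore no longer need your unproved remark that the terms with $k\geq n$ vanish modulo $p^n$; for $k\geq p$ that remark requires $k-v_p(k)\geq p-1\geq n$. Your argument then becomes a purely algebraic, self-contained proof. The paper's route instead outsources the logarithm to a standard fact about $\mathbb{Z}_p$ and reuses an interpolation lemma it needs elsewhere anyway.
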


\begin{proof}
We first note that the reduction modulo $p^n$ of
$\alpha(x,p-1)\alpha(x+1,r)-\alpha(1,r)\alpha(x,r)$ has degree at most $n-1<p$, since the
coefficients of $x^k$ in all these polynomials are divisible by $p^k$. Hence, by
Lemma \ref{zero_phi_one}, the statement $(1)$ holds if and only if
$$\alpha(a,p-1)\,\alpha(a+1,r)\equiv \alpha(1,r)\,\alpha(a,r)\bmod p^n\quad\text{for all integers } a\geq0 .$$
Since $\alpha(a, m)\in 1 + p\mathbb Z_p$, the $p$-adic logarithm $\log(\alpha(a,m))$ is
defined. Moreover, $\log$ is multiplicative, and $u\equiv v\bmod p^n$ if and only if
$\log u\equiv\log v\bmod p^n$ for $u,v\in1+p\mathbb{Z}_p$ (see \cite[Ch.II,\S $5$]{Neukirch}).
Therefore, $(1)$ holds if and only if
\begin{equation}\label{log_eq}
\log (\alpha(a,p-1)) + \log(\alpha(a+1,r)) \equiv \log(\alpha(1,r)) +\log(\alpha(a,r))\bmod p^n
\end{equation}
for all integers $a\geq0$. Since $\alpha(a,m)$ is a product, we have
$$\log(\alpha(a,m))=\sum_{i=1}^{m}\log\left(1+\frac{pa}{i}\right)=\sum_{l\geq1}e_l(m)\,a^l,
\qquad\text{where } e_l(m)=\frac{(-1)^{l-1}p^l\,\h^{(l)}_m}{l}.$$
Since $n<p$, we have $v_p(e_l(m))\geq l-v_p(l)\geq n$ for $l\geq n$. Hence, putting
$L_m(x)=\sum_{l=1}^{n-1}e_l(m)\,x^l\in\mathbb{Z}_p[x]$, we get
$\log(\alpha(a,m))\equiv L_m(a)\bmod p^n$ for all integers $a\geq0$. Let
$\Psi(x)=L_{p-1}(x)+L_r(x+1)-L_r(1)-L_r(x)$. Therefore, \eqref{log_eq} is equivalent to
$\Psi(a)\equiv0\bmod p^n$ for all integers $a\geq0$. Since $\deg\Psi\leq n-1<p$, by
Lemma \ref{zero_phi_one} this holds if and only if $\Psi\equiv0\bmod p^n$. It therefore
suffices to show that $\Psi\equiv0\bmod p^n$ is equivalent to $(2)$.

Next, we compute the coefficients of $\Psi$. We note that
$$L_r(x+1)=\sum_{l=1}^{n-1}e_l(r)(x+1)^l=\sum_{i=0}^{n-1}\Bigl(\sum_{l=i}^{n-1}{l\choose i}e_l(r)\Bigr)x^i .$$
Since $L_{p-1}$ and $L_r$ have no constant term and the constant term of $L_r(x+1)$ is
$L_r(1)$, the constant term of $\Psi$ is zero. For $1\leq i\leq n-1$, the coefficient of
$x^i$ in $\Psi$ is $e_i(p-1)+\sum_{l=i+1}^{n-1}{l\choose i}e_l(r)$. Using
${l\choose i}/l={l-1\choose i-1}/i$ and multiplying by the unit $(-1)^{i-1}i$, we see that
all the coefficients of $\Psi$ are zero modulo $p^n$ if and only if
\begin{align*}
    e_i(p-1)&\equiv-\sum_{l=i+1}^{n-1}{l\choose i}e_l(r)\bmod p^n\quad\text{for all } 1\leq i\leq n-1 \\
    \iff p^i\h_{p-1}^{(i)} & \equiv -\sum_{l=i+1}^{n-1}(-1)^{l-i}{l-1\choose i-1}p^{l}\h_r^{(l)}\bmod p^n\quad\text{for all } 1\leq i\leq n-1 .
\end{align*}
Next, writing $l=i+t$ and dividing by $p^i$, we get
$$\h_{p-1}^{(i)} \equiv \sum_{t=1}^{n-1-i}(-1)^{t+1}{i+t-1\choose t}p^{t}\h_r^{(i+t)}\bmod p^{n-i}\quad\text{for all } 1\leq i\leq n-1.$$
Finally, when $i=n-1$ the sum is empty and the congruence reduces to
$\h^{(n-1)}_{p-1}\equiv0\bmod p$, which holds by Lemma \ref{valuation}. Hence only
$1\leq i\leq n-2$ need be imposed, which is $(2)$. This completes the proof.
\end{proof}

\begin{lemma}\label{valuation}
For any integer $1\leq k \leq p-2$, $\h^{(k)}_{p-1}\equiv0\ \bmod p$. Moreover, $\h^{(k)}_{p-1}\equiv0\ \bmod p^2$ when $k$ is odd and $k\leq p-3$.
\end{lemma}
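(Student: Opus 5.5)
The plan is to work in $\mathbb{F}_p$ and use power sums over the multiplicative group; the only refinement needed for the second claim is the symmetry $i\mapsto p-i$.

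\emph{First claim.} Since $k\leq p-2$, the exponent $-k$ is not divisible by $p-1$. Choose a generator $g$ of $(\mathbb{Z}/p\mathbb{Z})^\times$. Then $\h^{(k)}_{p-1}\equiv\sum_{j=0}^{p-2}g^{-kj}\bmod p$. This is a geometric series with ratio $g^{-k}\neq1$, so it equals $(g^{-k(p-1)}-1)/(g^{-k}-1)=0$ in $\mathbb{F}_p$. Equivalently, one can quote Theorem \ref{bernoulli poly} for the exponent $m=p-1-k$, which runs over $1\leq m\leq p-2$. Either way, $p\mid\h^{(k)}_{p-1}$.

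\emph{Second claim.} Let $k$ be odd with $k\leq p-3$. Pair the terms $i$ and $p-i$:
$$2\h^{(k)}_{p-1}=\sum_{i=1}^{p-1}\Bigl(\frac1{i^k}+\frac1{(p-i)^k}\Bigr).$$
Because $k$ is odd, $(p-i)^k=-(i-p)^k$, so each bracket equals $\frac{(i-p)^k-i^k}{i^k(i-p)^k}$. The numerator expands as $-kpi^{k-1}+O(p^2)$. The denominator is $i^{2k}(1-p/i)^k$, which is congruent to $i^{2k}$ modulo $p$. Hence each bracket is $\equiv -kp\,i^{-k-1}\bmod p^2$. Summing over $i$ gives
$$2\h^{(k)}_{p-1}\equiv -kp\,\h^{(k+1)}_{p-1}\bmod p^2.$$
Since $k+1\leq p-2$, the first claim gives $p\mid \h^{(k+1)}_{p-1}$, so the right-hand side vanishes modulo $p^2$. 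As $p$ is odd, $2$ is a unit, and we conclude $\h^{(k)}_{p-1}\equiv0\bmod p^2$.

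The step needing the most care is the expansion modulo $p^2$ in the second claim. There one must track that the error terms are genuinely $O(p^2)$ in $\mathbb{Z}_p$ and that the hypothesis $k\leq p-3$ is exactly what makes the first claim applicable to the exponent $k+1$. The rest is routine.
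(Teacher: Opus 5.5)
Your proof is correct and is essentially the paper's. The primitive-root geometric series is a variant of the paper's substitution $i\mapsto ci$ for any $c$ with $c^k\not\equiv1\bmod p$. Your mod-$p^2$ expansion of each pair $\{i,p-i\}$ is the $t=1$ truncation of the paper's identity \eqref{eqn pair i_p-i}, combined with the first claim at exponent $k+1\leq p-2$.

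The only loose point is the parenthetical appeal to Theorem \ref{bernoulli poly}. That route would also need $i^{-k}\equiv i^{p-1-k}\bmod p$ and the $p$-integrality of the Bernoulli numbers involved. Your main argument does not depend on it.
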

\begin{proof}
Observe that for all $1\leq k\leq p-2$, the polynomial $X^k-1$ has at most $k$ roots in $\mathbb{Z}/p\mathbb{Z}$ implies that there is an integer $c\in\mathbb{Z}$ such that $p\nmid c$ and $c^k\not\equiv 1\bmod p$. Note that $i\mapsto ci$ gives bijection from $(\mathbb{Z}/p\mathbb{Z})^\times$ to $(\mathbb{Z}/p\mathbb{Z})^\times$ implies that 
$$\h^{(k)}_{p-1}\equiv\sum_{i=1}^{p-1}(ci)^{-k}=c^{-k}\,\h^{(k)}_{p-1}\bmod p,$$
that is, $(1-c^{-k})\,\h^{(k)}_{p-1}\equiv0\bmod p$. Since $1-c^{-k}$ is a unit, we get
$\h^{(k)}_{p-1}\equiv0\bmod p$ for all $1\leq k\leq p-2$. Next we prove moreover part $\h^{(k)}_{p-1}\equiv0\ \bmod p^2$ when $k$ is odd. Since $v_p\bigl(\frac pi\bigr) = 1$ for $1\leq i\leq p-1$, the expansion
$\bigl(1-\frac pi\bigr)^{-k}=\sum_{t\geq0}{k+t-1\choose t}\frac{p^t}{i^t}$ converges in $\mathbb Z_p$, and hence
$(p-i)^{-k}=-i^{-k}\bigl(1-\frac pi\bigr)^{-k}=-\sum_{t\geq0}{k+t-1\choose t}\frac{p^t}{i^{k+t}}$ as $k$ is odd. Using this and pairing $i$ with $p-i$, we get
\begin{align}\label{eqn pair i_p-i}
    2\,\h^{(k)}_{p-1} =-\sum_{t\geq1}{k+t-1\choose t}p^t\,\h^{(k+t)}_{p-1}.
\end{align}

Modulo $p^2$, only the term $t=1$ remains, so $2\,\h^{(k)}_{p-1}\equiv-k\,p\,\h^{(k+1)}_{p-1}\bmod p^2$. Since $k+1\leq p-2$, the first part gives $\h^{(k+1)}_{p-1}\equiv0\bmod p$. Hence $\h^{(k)}_{p-1}\equiv0\bmod p^2$.
\end{proof}

\begin{lemma}\label{depth_power}
Let $j,j'\in\p$ and $1\leq n<p$. Then the following are equivalent
\begin{enumerate}
    \item $\h_j(k)\equiv\h_{j'}(k)\bmod p^{n-k}\ \text{ for all }1\leq k\leq n-1$;
    \item  $\h^{(k)}_j\equiv\h^{(k)}_{j'}\bmod p^{n-k}\ \text{ for all }1\leq k\leq n-1 .$
\end{enumerate}
\end{lemma}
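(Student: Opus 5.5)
The plan is to use Newton's identities relating elementary symmetric functions to power sums, in the form appropriate to the generating function $\alpha(x,m)$. For $m\in\p$ consider the formal power series identity
$$\log\Bigl(\sum_{k\geq0}\h_m(k)\,y^k\Bigr)=\sum_{i=1}^m\log\Bigl(1+\frac{y}{i}\Bigr)=\sum_{l\geq1}\frac{(-1)^{l-1}}{l}\h^{(l)}_m\,y^l ,$$
or equivalently Newton's recursion
$$k\,\h_m(k)=\sum_{l=1}^{k}(-1)^{l-1}\h^{(l)}_m\,\h_m(k-l),\qquad k\geq1 .$$
Since $k\leq n-1<p$, each $k$ is a unit in $\mathbb Z_p$. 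Thus $\h_m(k)$ is a polynomial with $\mathbb Z_p$-coefficients (after dividing by $k!$, a unit) in $\h_m^{(1)},\dots,\h_m^{(k)}$, and conversely $\h^{(k)}_m$ is a polynomial with integer coefficients in $\h_m(1),\dots,\h_m(k)$.

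The key is to keep track of the weighted moduli. I would assign weight $l$ to $\h^{(l)}$ and to $\h(l)$; the recursion is homogeneous of weight $k$. I will prove by induction on $k$ that, under (2), $\h_j(k)\equiv\h_{j'}(k)\bmod p^{n-k}$. Subtract the recursions for $j$ and $j'$ and write each product difference as
$$AB-A'B'=(A-A')B+A'(B-B') .$$
Take $A=\h^{(l)}_j$, which is congruent to $A'=\h^{(l)}_{j'}$ mod $p^{n-l}$, and $B=\h_j(k-l)$, which is congruent to $B'=\h_{j'}(k-l)$ mod $p^{n-k+l}$ by induction. Both error terms are then divisible by $p^{\min(n-l,\,n-k+l)}$, and this is at least $p^{n-k}$ because $1\leq l\leq k$. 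All quantities lie in $\mathbb Z_p$, so no denominators intervene. Dividing by the unit $k$ gives the claim, and hence (2) implies (1).

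The converse is the same induction with the roles of the two families swapped. The recursion expresses $\h^{(k)}_m$ as $(-1)^{k-1}\bigl(k\,\h_m(k)-\sum_{l=1}^{k-1}(-1)^{l-1}\h^{(l)}_m\h_m(k-l)\bigr)$. The term $k\h_m(k)$ is controlled mod $p^{n-k}$ by (1). Each product term is controlled mod $p^{n-k}$ by the same $\min$ estimate, using induction for $\h^{(l)}$ with $l<k$ together with (1) for $\h(k-l)$.

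I expect no serious obstacle. The only points requiring care are the following:
\begin{itemize}
\item the integrality of all $\h_m(k)$ and $\h^{(l)}_m$ in $\mathbb Z_p$, which holds since $m\leq p-1$;
\item the invertibility of $k\leq n-1<p$, which is exactly where the hypothesis $n<p$ enters;
\item checking that the weighted-modulus bookkeeping $\min(n-l,n-k+l)\geq n-k$ holds uniformly, including the boundary terms $l=k$, where $\h(0)=1$ is identical on both sides.
\end{itemize}
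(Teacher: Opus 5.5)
Your proposal is correct and follows essentially the same route as the paper. Both subtract Newton's identities for $j$ and $j'$, split each product difference as $(A-A')B+A'(B-B')$, and induct on $k$ in both directions, dividing by the unit $k\leq n-1<p$ in the forward direction. Your bound $\min(n-l,\,n-k+l)\geq n-k$ is just a slightly more explicit form of the paper's observation that all earlier differences already vanish modulo $p^{n-k}$.
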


\begin{proof}
Let $d_k=\h_j(k)-\h_{j'}(k)$ and $\delta_k=\h^{(k)}_j-\h^{(k)}_{j'}$. By Newton's identity, we have $k\;\h_j(k)=\sum_{l=1}^{k}(-1)^{l-1}\h_j(k-l)\;\h^{(l)}_j$, and similarly for $j'$. Subtracting these two identities, we get
\begin{equation}\label{newton_diff}
k\,d_k=\sum_{l=1}^{k}(-1)^{l-1}\Bigl(\h_j(k-l)\,\delta_l+d_{k-l}\,\h^{(l)}_{j'}\Bigr).
\end{equation}
Suppose first that $\delta_l\equiv0\bmod p^{n-l}$ for all $1\leq l\leq n-1$. Since $\h_j(1) = \h_j^{(1)}$, we have $d_1\equiv 0\bmod p^{n-1}$. By induction, assume $d_i\equiv0\bmod p^{n-i}$ for all $i\leq k-1$. Note that this implies $d_i\equiv0\bmod p^{n-k}$ for all $i\leq k-1$, as $n-k \leq n-i$. Similarly,
$\delta_l\equiv0\bmod p^{n-k}$ for all $1\leq l\leq k$. Since also $d_0=0$, every term on the
right side of  \eqref{newton_diff} vanishes modulo $p^{n-k}$ implies that
$d_k\equiv0\bmod p^{n-k}$, as $k<p$ is a unit.

Conversely, suppose that $d_k\equiv0\bmod p^{n-k}$ for all $1\leq k\leq n-1$. Since
$\delta_1=d_1$, we have $\delta_1\equiv 0\bmod p^{n-1}$. By induction, assume
$\delta_i\equiv0\bmod p^{n-i}$ for all $i\leq k-1$. In \eqref{newton_diff}, the term with
$l=k$ is $(-1)^{k-1}\delta_k$, since $\h_j(0)=1$ and $d_0=0$. All the other terms, together
with $k\,d_k$, involve only $d_i$ with $i\leq k$ and $\delta_i$ with $i\leq k-1$, and hence
vanish modulo $p^{n-k}$. Therefore, \eqref{newton_diff} gives
$\delta_k\equiv 0\bmod p^{n-k}$.
\end{proof}

\section{Criterion for $D(p^n)$ via harmonic sums}\label{main criterian}

\begin{prop}\label{without mod p}
    Let $r,s\in\p$ with $r\geq s$. Then for any $a,b\in \z$
    \begin{align*}
        {{pa+r}\choose pb+s} &= {{pa}\choose pb}{{r}\choose s}\frac{\alpha(a,r)}{\alpha(b,s)\alpha(a-b,r-s)}\\
        \text{and}\quad{pa\choose pb} & = {a\choose b}\frac{\prod_{j = 0}^{a-1}\alpha(j,p-1)}{\prod_{j = 0}^{b-1}\alpha(j,p-1)\; \prod_{j = 0}^{a-b-1}\alpha(j,p-1)}.
    \end{align*}
\end{prop}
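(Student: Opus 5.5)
The plan is to prove both identities by factoring factorials into the part coming from multiples of $p$ and the part coming from the residues, with $\alpha(j,m)$ recording the residue parts.

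\emph{Basic factorization.} For integers $j\geq 0$ and $0\leq m\leq p-1$,
$$\prod_{i=1}^{m}(pj+i)=m!\prod_{i=1}^m\Bigl(1+\frac{pj}{i}\Bigr)=m!\,\alpha(j,m).$$
Two consequences follow immediately.

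\emph{Second identity.} Split $(pa)!$ into blocks $pj+1,\dots,pj+p$ for $0\leq j\leq a-1$. Each block contributes
$$(p-1)!\,\alpha(j,p-1)\cdot p(j+1).$$
Hence
$$(pa)!=p^a\,a!\,\bigl((p-1)!\bigr)^a\prod_{j=0}^{a-1}\alpha(j,p-1).$$
Apply this to $(pa)!$, $(pb)!$ and $(p(a-b))!$ and take the quotient. The powers $p^a/(p^bp^{a-b})$ cancel, and so do the powers $((p-1)!)^{a}/((p-1)!)^{b+(a-b)}$. What remains is $\binom ab$ times the stated ratio of products of $\alpha$'s. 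The case $b>a$ needs no work, since both sides are then zero by convention.

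\emph{First identity.} Write
$$(pa+r)!=(pa)!\prod_{i=1}^r(pa+i)=(pa)!\,r!\,\alpha(a,r).$$
Similarly $(pb+s)!=(pb)!\,s!\,\alpha(b,s)$. For the remaining factor, note $p(a-b)+(r-s)=pa+r-pb-s$, which gives
$$(pa+r-pb-s)!=(p(a-b))!\,(r-s)!\,\alpha(a-b,r-s).$$
This uses $r\geq s$, so that $0\leq r-s\leq p-1$. Dividing these three factorizations gives
$$\binom{pa+r}{pb+s}=\binom{pa}{pb}\binom rs\frac{\alpha(a,r)}{\alpha(b,s)\,\alpha(a-b,r-s)}.$$

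\emph{Main obstacle: the edge case $b>a$.} Then $\binom{pa}{pb}=0$, and the left side is also zero because $pb+s>pa+r$. The point to check is that the right side is well defined, i.e.\ that $\alpha(a-b,r-s)\neq 0$. For $a-b<0$, each factor is $1+p(a-b)/i$ with $1\leq i\leq p-1$, so it is $\equiv 1\bmod p$ and nonzero. Hence $\alpha(a-b,r-s)$ is a $p$-adic unit, and the identity reads $0=0$. So I would treat $b\leq a$ via the factorial computation and $b>a$ by this separate observation.
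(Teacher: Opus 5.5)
Your proposal is correct and essentially matches the paper's proof. Both use the factorization $\prod_{i=1}^{m}(pj+i)=m!\,\alpha(j,m)$ and the block decomposition $(pm)!=p^m\,m!\,((p-1)!)^m\prod_{j=0}^{m-1}\alpha(j,p-1)$, and both treat $a<b$ separately as the trivial case $0=0$. Your remark that $\alpha(a-b,r-s)$ is a nonzero $p$-adic unit when $a<b$ spells out a well-definedness point the paper leaves implicit.
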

\begin{proof}
     We first observe that if $a< b$, then $pa+r < pb +s$ as $r < p$, which gives 
    $${pa + r\choose pb+ s} = 0 = {pa\choose pb}= {a\choose b}.$$
    Therefore, it remains to prove our result when $a\geq b$. Next, we note that 
    $${pa+r\choose pb+s} = \frac{(pa)!\prod_{1\leq i_1\leq r}(pa+i_1)}{(pb)!(p(a-b))!\prod_{1\leq i_2\leq s}(pb+i_2)\prod_{1\leq i_3\leq r-s}(p(a-b)+i_3)}.$$
    Further, we have $\prod_{i = 1}^y(px+i) = y!\ \alpha(x,y)$ for every $x\in\z$ and $y\in \p$. Substituting these into the above equation gives 
    $${{pa+r}\choose pb+s} = {{pa}\choose pb}{{r}\choose s}\frac{\alpha(a,r)}{\alpha(b,s)\alpha(a-b,r-s)}\quad\forall\ a,b\in\z.$$
    Next, observe that $(pm)! = p^m\; m!\; ((p-1)!)^m\; \prod_{j = 0}^{m-1}\alpha(j,p-1)$. Using this, we obtain
    \begin{align*}
        {pa\choose pb} &= \frac{p^a\; a!\; ((p-1)!)^a\prod_{j = 0}^{a-1}\alpha(j,p-1)}{p^b\; b!\; ((p-1)!)^b\prod_{j = 0}^{b-1}\alpha(j,p-1)\; p^{a-b}\; (a-b)!\; ((p-1)!)^{a-b}\prod_{j = 0}^{a-b-1}\alpha(j,p-1)}\\
        &= {a\choose b}\frac{\prod_{j = 0}^{a-1}\alpha(j,p-1)}{\prod_{j = 0}^{b-1}\alpha(j,p-1)\; \prod_{j = 0}^{a-b-1}\alpha(j,p-1)}.
    \end{align*}
\end{proof}

\begin{proof}[Proof of Theorem \ref{alpha_crit_n}]
By Proposition \ref{without mod p}, $(r,s)\in D(p^n)$ if for all integers $a\geq b\geq 0$ the
following congruence holds
\begin{align}\label{eqn I cri_n}
    \frac{\alpha(a, r)\prod_{j = 0}^{a-1}\alpha(j,p-1)}{\alpha(b, s)\alpha(a-b,r-s)\prod_{j = 0}^{b-1}\alpha(j,p-1)\; \prod_{j = 0}^{a-b-1}\alpha(j,p-1)}\equiv 1 \bmod p^n.
\end{align}
We note that $\alpha(c,m)$ is a unit in $\mathbb{Z}/p^n\mathbb{Z}$ for all $c\in\z$ and
$m\in\p$, as $\alpha(c,m)\equiv 1\bmod p$. If condition $(2)$ holds, then evaluating it at
$x=j$ and using $\alpha(0,r)=1$, we get
$$\alpha(j,p-1)\equiv \frac{\alpha(1,r)\alpha(j,r)}{\alpha(j+1,r)}\bmod p^n\implies
\prod_{j = 0}^{c-1} \alpha(j,p-1)\equiv \frac{\alpha(1,r)^c}{\alpha(c,r)}\bmod p^n\quad\text{for all } c\geq0.$$
If condition $(1)$ holds then $\alpha(c,r)\equiv \alpha(c, s)\equiv \alpha(c,r-s)\bmod p^n$ for
all $c\in \z$. Hence, if both $(1)$ and $(2)$ hold, then the left-hand side of
\eqref{eqn I cri_n} reduces to
$$\frac{\alpha(1,r)^a}{\alpha(1,r)^b \alpha(1,r)^{a-b}}=  1,$$
which shows that $(r,s)\in D(p^n)$. Next, we will prove the converse. If $(r,s)\in D(p^n)$,
then taking $b = 0$ Proposition \ref{without mod p} implies
$\alpha(a, r)\equiv \alpha(a, r-s)\bmod p^n$ for all integers $a\geq0$, and taking $a= b$
gives $\alpha(a, r)\equiv \alpha(a, s)\bmod p^n$. Therefore, we get
$\alpha(a, r)\equiv \alpha(a, s)\equiv\alpha(a,r-s)\bmod p^n$. Now,
Lemma \ref{zero_phi_one} with $\Phi_1(x) = \alpha(x, r)- \alpha(x,s)\bmod p^n$ and
$\Phi_2(x) = \alpha(x, r)- \alpha(x,r-s)\bmod p^n$ implies $(1)$ as
$\deg(\Phi_i)\leq n-1< p-1$. Next, taking $a = b +1$, as $(r,s)\in D(p^n)$,
Proposition \ref{without mod p} gives that
\begin{align*}
    (b+1){r\choose s}\equiv (b+1)\alpha(b, p-1){r\choose s}\frac{\alpha(b+1,r)}{\alpha(b,s)\alpha(1,r-s)}\bmod p^n.
\end{align*}
When $p\nmid (b+1)$, dividing by $(b+1){r\choose s}$ and using $(1)$, we get
$$\alpha(b, p-1)\alpha(b+1,r) \equiv \alpha(1,r)\alpha(b,r)\bmod p^n.$$
Next, let $\Phi(x) = \alpha(x, p-1)\alpha(x+1,r)- \alpha(1,r)\alpha(x,r)\bmod p^n$. Since the
coefficients of $x^k$ in $\alpha(x+1,r),\ \alpha(x,r)$, and $\alpha(x,p-1)$ are divisible by
$p^k$, the same holds for the product $\alpha(x, p-1)\alpha(x+1,r)$. Hence
$\deg(\Phi) \leq n-1< p-1$. Moreover, $\Phi$ vanishes at all integers $b\geq0$ with
$b\not\equiv-1\bmod p$, that is, on $p-1\geq n$ residue classes. Again by
Lemma \ref{zero_phi_one}, we get $\Phi=0$. This completes the proof.
\end{proof}

\begin{proof}[Proof of Corollary \ref{crit_power}]
Comparing the coefficients of $x^k$ in $\alpha(x,r)\equiv \alpha(x, s)\equiv \alpha(x,r-s)\bmod p^n$ and dividing by $p^k$, we see that condition $(1)$ of Theorem \ref{alpha_crit_n} is equivalent to $\h_r(k)\equiv \h_s(k)\equiv \h_{r-s}(k)\bmod p^{n-k}$ for all $1\leq k\leq n-1$, and hence to $(i)$ by Lemma \ref{depth_power}. By Lemma \ref{shift_power}, condition $(2)$ of Theorem \ref{alpha_crit_n} is equivalent to $(ii)$. Hence, conditions $(1)$ and $(2)$ of Theorem \ref{alpha_crit_n} are equivalent to $(i)$ and $(ii)$, and the result follows from Theorem \ref{alpha_crit_n}.
\end{proof}

\section{$D(p^n)$ invariant under symmetries}\label{symmetry_sec}
Pascal's triangle is invariant under one reflection symmetry, namely $\binom rs=\binom r{r-s}$. Further, fixing the number of rows of Pascal's triangle to be $p$, from $0$ to $p-1$, then modulo $p$, up to sign, it is invariant under all the symmetries of the dihedral group $D_3$. This comes naturally from the trinomial coefficients $T(x,y,z)=\frac{(p-1)!}{x!\,y!\,z!}$, which are invariant under all permutations of $x,y$ and $z$. So, fixing one element $(r,s)$ of Pascal's triangle with $p$ rows, one can think of fixing its distances from the three edges,
\[
  x=s,\qquad y=r-s,\qquad z=p-1-r,\qquad x+y+z=p-1,
\]
and the six symmetries of $D_3$ act simply by permuting $x$, $y$ and $z$.
 
\begin{center}\vspace{-4pt}
\begin{tikzpicture}[scale=0.5]
 \draw[gray!60,thick] (0,0) -- (-2.8,-5.2) -- (2.8,-5.2) -- cycle;
 \fill[black] (-0.45,-3) circle (0.075);
 \node[font=\scriptsize,above right] at (-0.8,-2.8) {$(r,s)$};
 \draw[densely dotted,thick,red!70!black] (-0.45,-3) -- (-1.62,-3);
 \draw[densely dotted,thick,red!70!black] (-0.45,-3) -- (1.62,-3);
 \draw[densely dotted,thick,red!70!black] (-0.45,-3) -- (-0.45,-5.2);
 \node[font=\scriptsize,red!70!black,below] at (-1.05,-2.95) {$x$};
 \node[font=\scriptsize,red!70!black,below] at (0.6,-2.95) {$y$};
 \node[font=\scriptsize,red!70!black,left] at (-0.5,-4.2) {$z$};
 \node[font=\scriptsize] at (0,-6.0) {$x+y+z=p-1$};
\end{tikzpicture}\vspace{-6pt}
\end{center}
 
\noindent
Observe that
\begin{align}\label{trinomial eqn}
  \frac{(p-1)!}{x!\,y!\,z!}={p-1\choose z}{x+y\choose x}={p-1\choose x}{y+z\choose y}
  ={p-1\choose y}{z+x\choose z}.
\end{align}
We note that ${p-1\choose k}\equiv(-1)^k \bmod p$. Further, using this in the above equation,
we get
\[
  {r\choose s}\equiv(-1)^{r-s}{{p-1-s}\choose r-s}\equiv(-1)^{s}{{p-1-r+s}\choose s}
  \bmod p .
\]
The reflection is the transposition $x\leftrightarrow y$, which fixes $z$; the common factor
${p-1\choose z}$ then cancels and the identity $\binom rs=\binom r{r-s}$ holds over
$\mathbb Z$. Every other element of $D_3$ moves $z$, so the factors ${p-1\choose k}$ differ
and can only be compared modulo $p$. We illustrate this with $p=7$.
 
\begin{center}
\begin{tikzpicture}[scale=0.54]
\begin{scope}
 \node[align=center,font=\small] at (0,1.35) {$\binom rs \bmod 7$\\[-3pt] one reflection only};
 \draw[gray!35] (0,0.3) -- (-3.15,-6.02) -- (3.15,-6.02) -- cycle;
 \draw[blue!65!black,dashed] (0,0.55) -- (0,-6.55);
 \foreach \r/\vals in {0/{1}, 1/{1,1}, 2/{1,2,1}, 3/{1,3,3,1}, 4/{1,4,6,4,1},
                       5/{1,5,3,3,5,1}, 6/{1,6,1,6,1,6,1}}{
   \foreach \v [count=\s from 0] in \vals {
     \node[font=\small] at ({\s-\r/2},{-\r}) {\v};}}
 \draw[<->,blue!65!black,thin] (-0.75,-4.3) to[bend right=28] (0.75,-4.3);
 \draw[<->,blue!65!black,thin] (-1.28,-5.3) to[bend right=24] (1.28,-5.3);
 \node[font=\scriptsize,blue!65!black] at (0,-7.0) {$\binom rs=\binom r{r-s}$, exactly};
\end{scope}
\begin{scope}[xshift=11cm]
 \node[align=center,font=\small] at (0,1.35) {$(-1)^z\binom rs\equiv T(x,y,z) \bmod 7$\\[-3pt] all six symmetries};
 \draw[gray!35] (0,0.3) -- (-3.15,-6.02) -- (3.15,-6.02) -- cycle;
 \draw[gray!35,dashed] (0,0.3) -- (0,-6.02);
 \draw[gray!35,dashed] (-3.15,-6.02) -- (1.58,-2.86);
 \draw[gray!35,dashed] (3.15,-6.02) -- (-1.58,-2.86);
 \foreach \r/\vals in {0/{1}, 1/{6,6}, 2/{1,2,1}, 3/{6,4,4,6}, 4/{1,4,6,4,1},
                       5/{6,2,4,4,2,6}, 6/{1,6,1,6,1,6,1}}{
   \foreach \v [count=\s from 0] in \vals {
     \node[font=\small] at ({\s-\r/2},{-\r}) {\v};}}
 \foreach \pt in {(-0.5,-3),(0.5,-3),(-1.0,-4),(1.0,-4),(-0.5,-5),(0.5,-5)}
   \draw[orange!85!black,thick] \pt circle (0.30);
 \draw[->,thick,blue!65!black] (2.9,-1.15) arc (32:-88:2.55);
 \node[font=\scriptsize,blue!65!black] at (0,-7.0) {circled: all equal to $4$};
\end{scope}
\end{tikzpicture}\vspace{-8pt}
\end{center}
 
\noindent
In the triangle on the right, the six circled elements form one orbit of $D_3$ their coordinates $(x,y,z)$ are the six permutations of $(1,2,3)$ and have same value $T(1,2,3)=60\equiv4 \bmod 7.$ 
 
Modulo $p$ the congruence above holds for \emph{every} element $(-1)^{p-1-r}{r\choose s}\bmod p$, so by itself it distinguishes nothing. Modulo higher powers of $p$ it fails in general: for $p=7$ the six circled entries become $46,46,4,4,39,39$ modulo $49$, and only four of the $28$ cells retain the symmetry. It is therefore meaningful to ask for which cells it persists, which motivates the following definition.
 
\begin{definition}\label{dfn symmetric}
We say an ordered pair $(r,s)\in\{0,1,\dots,p-1\}^2$ with $r\geq s$ satisfies the rotation symmetry modulo $p^n$, for a positive integer $n$, if
\[
  {r\choose s}\equiv(-1)^{r-s}{{p-1-s}\choose r-s}\equiv(-1)^{s}{{p-1-r+s}\choose s}
  \bmod p^n .
\]
\end{definition}
\begin{rk}
    Let $(r,s)\in\{0,1,...,p-1\}^2$ with $r\geq s$. In \cite[Theorem 8]{rowland} proves that $(r,s)\in D(p^2)$ if and only if $(r,s)$ satisfy the rotation symmetry modulo $p^2$.
\end{rk}

\begin{prop}\label{symmetry_D_p^n}
   Let $1\leq n <p$. If $(r, s)\in D(p^n)$, then $(r,s)$ satisfies the rotational symmetry modulo $p^n$.
\end{prop}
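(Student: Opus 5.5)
The plan is to write both rotated binomial coefficients as ${r\choose s}$ times a ratio of values of $\alpha(-1,\cdot)$. Then condition $(1)$ of Theorem \ref{alpha_crit_n}, specialized at $x=-1$, makes the ratio congruent to $1$.

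First, I record the elementary identity
$$\prod_{i=1}^{m}(p-i)=(-1)^m\,m!\,\prod_{i=1}^{m}\Bigl(1-\frac{p}{i}\Bigr)=(-1)^m\,m!\,\alpha(-1,m)\quad\text{for } m\in\p .$$
This is the case $x=-1$ of $\prod_{i=1}^{y}(px+i)=y!\,\alpha(x,y)$, up to sign, as in the proof of Proposition \ref{without mod p}.

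Next I compute the second coefficient:
$$\frac{(p-1-s)!}{(p-1-r)!}=\prod_{i=s+1}^{r}(p-i)=\frac{\prod_{i=1}^{r}(p-i)}{\prod_{i=1}^{s}(p-i)}=(-1)^{r-s}\frac{r!}{s!}\cdot\frac{\alpha(-1,r)}{\alpha(-1,s)}.$$
Dividing by $(r-s)!$ gives the identity over $\mathbb{Z}_p$
$$(-1)^{r-s}{p-1-s\choose r-s}={r\choose s}\frac{\alpha(-1,r)}{\alpha(-1,s)}.$$
The same computation with $s$ replaced by $r-s$ gives
$$(p-1-r+s)!/(p-1-r)!=\prod_{i=r-s+1}^{r}(p-i),$$
and hence
$$(-1)^{s}{p-1-r+s\choose s}={r\choose s}\frac{\alpha(-1,r)}{\alpha(-1,r-s)}.$$
These are exact identities, and all the $\alpha(-1,m)$ appearing are units in $\mathbb{Z}_p$ because $\alpha(x,m)\equiv 1\bmod p$.

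Finally, suppose $(r,s)\in D(p^n)$ with $n<p$. The forward direction of Theorem \ref{alpha_crit_n} gives $\alpha(x,r)\equiv\alpha(x,s)\equiv\alpha(x,r-s)\bmod p^n$ coefficientwise. Evaluating at $x=-1$ yields $\alpha(-1,r)\equiv\alpha(-1,s)\equiv\alpha(-1,r-s)\bmod p^n$. Hence both ratios above are $\equiv 1\bmod p^n$, and the two displayed identities give exactly the congruences of Definition \ref{dfn symmetric}.

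There is no real obstacle here. The only points needing care are the sign bookkeeping in $\prod(p-i)$ and the fact that dividing by the units $\alpha(-1,\cdot)$ is legitimate modulo $p^n$. The hypothesis $n<p$ enters only through the use of Theorem \ref{alpha_crit_n}.
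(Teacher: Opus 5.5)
Your proposal is correct and is essentially the paper's proof. The paper likewise writes each rotated coefficient as $\binom{r}{s}$ times a ratio of the units $\alpha(-1,\cdot)$, obtained from $\binom{p-1}{k}=(-1)^k\alpha(-1,k)$ and the trinomial identity rather than from your direct falling-factorial computation, and then evaluates condition $(1)$ of Theorem \ref{alpha_crit_n} at $x=-1$ to make those ratios $\equiv 1 \bmod p^n$.
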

\begin{proof}
For any $k\in\{0,1,\dots,p-1\}$, we have
$${p-1\choose k} = \prod_{i = 1}^{k}\left(\frac{p}{i}-1\right) = (-1)^k\alpha(-1, k).$$
Using the above equation together with \eqref{trinomial eqn} and
${p-1\choose p-1-r}={p-1\choose r}$, we have
\begin{align*}
    \frac{(-1)^{r-s}{p-1-s\choose r-s}}{{r\choose s}}=& \frac{(-1)^{r-s}{p-1\choose r}}{{p-1\choose s}}
     =\frac{\alpha(-1,r)}{\alpha(-1,s)}\\
    \text{and}\quad \frac{(-1)^{r-s}{p-1-s\choose r-s}}{(-1)^s{p-1-r+s\choose p-1-r}}=& \frac{(-1)^{r-s}{p-1\choose r-s}}{(-1)^s{p-1\choose s}}
    =\frac{\alpha(-1,r-s)}{\alpha(-1,s)},
\end{align*}
where the denominators are units, as $\alpha(-1,m)\equiv 1\bmod p$. Since $(r,s)\in D(p^n)$,
condition $(1)$ of Theorem \ref{alpha_crit_n} holds; evaluating it at $x=-1$, we get
$$\alpha(-1,r)\equiv\alpha(-1,s)\equiv\alpha(-1,r-s)\bmod p^n.$$
Hence both ratios above are congruent to $1$ modulo $p^n$. Using
${p-1-r+s\choose p-1-r}={p-1-r+s\choose s}$, we obtain the desired result
\[
  {r\choose s}\equiv(-1)^{r-s}{{p-1-s}\choose r-s}\equiv(-1)^{s}{{p-1-r+s}\choose s}
  \bmod p^n .
\]
\end{proof}
\begin{rk}\label{rk centroid}
     The converse of Proposition \ref{symmetry_D_p^n} does not hold. For $p = 7$, $(4,2)$ the centroid of Pascal's triangle with $7$ rows: it satisfies the rotation symmetry modulo $p^n$ for all $n\geq 1$, yet $(4,2)\not\in D(7^n)$ for all $n\geq 3$.
 \end{rk}
 

Next we prove Theorem \ref{full orbit} which says that if a point $(r,s)$ lies in $D(p^n)$ then its full orbit under the action of the dihedral group $D_3$ lies in $D(p^n)$.

\begin{proof}[Proof of Theorem \ref{full orbit}]
We note that $(1)$ in Theorem \ref{alpha_crit_n} is symmetric in $s$ and $r-s$, and $(2)$
depends only on $r$, so it is the same for the points $(r,s)$ and $(r,r-s)$. Hence, if
$(r,s)\in D(p^n)$, then Theorem \ref{alpha_crit_n} implies that $(r,r-s)\in D(p^n)$. Next, we prove that $(p-1-s,r-s)\in D(p^n)$. Since $(r,s)\in D(p^n)$, by Theorem \ref{alpha_crit_n} we have
\begin{align}\label{eqn r,s full orbit}
 \nonumber &\alpha(x,r)\equiv\alpha(x,s)\equiv\alpha(x, r-s)\bmod p^n\\ 
 &\alpha(x,p-1)\alpha(x+1, r)\equiv\alpha(1,r)\alpha(x,r)\bmod p^n.
\end{align}
Let $m\in\{r,s\}$. By Lemma \ref{alpha_factor}, we
have
$$\alpha(x,p-1-m)\,\alpha(-1-x,m)=\alpha(x,p-1)\,\alpha(-1,m).$$
By \eqref{eqn r,s full orbit}, we have $\alpha(x,m)\equiv\alpha(x,r)\bmod p^n$. Therefore, we get
\begin{align}\label{eqn 1 full orbit}
   \alpha(x,p-1-m)\,\alpha(-1-x,r)\equiv\alpha(x,p-1)\,\alpha(-1,r)\bmod p^n .
\end{align}
We note that \eqref{eqn r,s full orbit} gives the hypothesis of Lemma \ref{alpha_odd}. Hence, by
Lemma \ref{alpha_odd} with $x$ replaced by $x+1$, we get
$\alpha(-1-x,r)\,\alpha(x+1,r)\equiv 1\bmod p^n$. Further, multiplying
\eqref{eqn 1 full orbit} by $\alpha(x+1,r)$, and then using \eqref{eqn r,s full orbit} and Lemma \ref{alpha_odd}
at $x=1$, we get
$$\alpha(x,p-1-m)\equiv\alpha(x,p-1)\,\alpha(x+1,r)\,\alpha(-1,r)\equiv\alpha(1,r)\,\alpha(-1,r)\,\alpha(x,r)\equiv\alpha(x,r)\bmod p^n .$$
Therefore, we have
$$\alpha(x,p-1-s)\equiv\alpha(x,p-1-r)\equiv\alpha(x,r)\equiv\alpha(x,r-s)\bmod p^n .$$
The last congruence follows from \eqref{eqn r,s full orbit}. The above congruence implies $(1)$ in Theorem \ref{alpha_crit_n} for $(p-1-s,r-s)$ holds and also shows that $(2)$ in Theorem \ref{alpha_crit_n} is same for the points $(r,s)$ and $(p-1-s,r-s)$ as $\alpha(x,p-1-s)\equiv \alpha(x,r)\bmod p^n$ (other congruence obtain by replacing $x$ by $x+1$ and $x$). Hence, we get the desired result $(p-1-s,r-s)\in D(p^n)$.
\end{proof}

\section{Special points in $D(p^n)$} 
\subsection{Vertices}\label{vitices sec}
In this subsection, we study a result of Jacobsthal-Ljunggren type congruence in \eqref{eqn Jl type intro} modulo $p^n$ for any positive integer $n<p$ (with our main assumption $p\geq 5$). Such a result gives an equivalent condition for the vertex $(0,0)$ to lie in $D(p^n)$. Now prove Theorem \ref{main_base_gen} which immediately follows from Corollary \ref{crit_power} for $(r,s) = (0,0)$, as $\h_0^{(l)} = 0$ for all $l$.

\begin{proof}[Proof of Corollary \ref{main_base_gen}]
By Corollary \ref{cor_orbit}, the set of vertices $\{(0,0),(p-1,0),(p-1,p-1)\}$ is contained in $D(p^n)$ if and only if $(0,0)\in D(p^n)$. Since $\h^{(l)}_0=0$ for all $l$,  for $(r,s)=(0,0)$ condition $(i)$ of Corollary \ref{crit_power} holds trivially and condition $(ii)$ reduces to $\h_{p-1}^{(l)}\equiv 0\bmod p^{n-l}$ for $1\leq l\leq n-2$. Hence, by Corollary \ref{crit_power}, $(0,0)\in D(p^n)$ if and only if these congruences hold.
\end{proof}

A prime $p$ is called a Wolstenholme prime if ${2p-1\choose p-1}\equiv 1\bmod p^4$. By Glaisher \cite{glaisher1900II} with $n =3$, we have ${2p-1\choose p-1}\equiv 1+2p\,\h_{p-1}^{(1)}\bmod p^4$ for $p\geq5$. Hence $p$ is a Wolstenholme prime if and only if $\h_{p-1}^{(1)}\equiv 0\bmod p^3$ if and only if $B_{p-3}\equiv 0\bmod p$. The last equivalence holds again by Glaisher congruence \cite{glaisher1900}, $\h^{(1)}_{p-1}\equiv-\frac{p^2}{3}B_{p-3}\bmod p^3$. Only two Wolstenholme primes $16843$ and $2124679$ are known and by \cite{BHMT} these are only two Wolstenholme primes less than $10^{11}$. 
\begin{cor}\label{vertex_small_n}
\begin{enumerate}
  \item[(a)] For $p\geq 5$, $(0,0)\in D(p^3)$.
  \item[(b)] For $p\geq 5$, $(0,0)\in D(p^4)$ if and only if $\h_{p-1}^{(1)}\equiv 0\bmod p^3$ if and only if $p$ is a Wolstenholme prime.
  \item[(c)] For $p\geq7$, $(0,0)\in D(p^5)$ if and only if
  $\h^{(1)}_{p-1}\equiv0\bmod p^4$, if and only if ${2p-1\choose p-1}\equiv1\bmod p^5$.
\end{enumerate}
\end{cor}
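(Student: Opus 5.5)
All three parts will follow from Corollary \ref{main_base_gen} with $n=3,4,5$ (note $n<p$ in each case), together with Lemma \ref{valuation}, Theorem \ref{Wolstenholme}, and the Glaisher facts recalled just before the statement. By Corollary \ref{main_base_gen}, $(0,0)\in D(p^n)$ if and only if $\h_{p-1}^{(l)}\equiv0\bmod p^{n-l}$ for $1\le l\le n-2$.

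For (a), take $n=3$. The only condition is $\h_{p-1}^{(1)}\equiv 0\bmod p^2$, which is Wolstenholme's theorem (Theorem \ref{Wolstenholme}).

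For (b), take $n=4$. The conditions are $\h_{p-1}^{(1)}\equiv0\bmod p^3$ and $\h_{p-1}^{(2)}\equiv0\bmod p^2$. I will show that the second condition holds automatically for $p\ge5$, and that it is the $l=2$ condition which matters. I pair $i$ with $p-i$ as in the proof of Lemma \ref{valuation}, now with $k$ even. Then $(p-i)^{-2}=i^{-2}(1-p/i)^{-2}$, which gives
\[
2\h^{(2)}_{p-1}\equiv 2p\,\h^{(3)}_{p-1}\bmod p^2 .
\]
Since $3\le p-2$, Lemma \ref{valuation} gives $\h^{(3)}_{p-1}\equiv0\bmod p$, so $\h^{(2)}_{p-1}\equiv0\bmod p^2$. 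Hence $(0,0)\in D(p^4)$ if and only if $\h^{(1)}_{p-1}\equiv0\bmod p^3$. The Glaisher congruence ${2p-1\choose p-1}\equiv 1+2p\h^{(1)}_{p-1}\bmod p^4$ then identifies this with $p$ being a Wolstenholme prime.

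For (c), take $n=5$ with $p\ge7$. The conditions are
\[
\h^{(1)}_{p-1}\equiv0 \bmod p^4,\qquad \h^{(2)}_{p-1}\equiv0\bmod p^3,\qquad \h^{(3)}_{p-1}\equiv0\bmod p^2 .
\]
The third holds by Lemma \ref{valuation}, since $3\le p-3$. This step is where the main obstacle lies: I must show the first two are equivalent, or at least that the first implies the second and that the first is equivalent to ${2p-1\choose p-1}\equiv1\bmod p^5$.

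For the relation between $\h^{(1)}$ and $\h^{(2)}$, I will use the identity \eqref{eqn pair i_p-i} with $k=1$:
\[
2\h^{(1)}_{p-1}=-p\h^{(2)}_{p-1}-p^2\h^{(3)}_{p-1}-p^3\h^{(4)}_{p-1}-\cdots .
\]
Modulo $p^4$ the right-hand side is $-p\h^{(2)}-p^2\h^{(3)}-p^3\h^{(4)}$. Here $p^3\h^{(4)}\equiv0\bmod p^4$ because $4\le p-2$, and $p^2\h^{(3)}\equiv0\bmod p^4$ by the $p^2$ bound. So $2\h^{(1)}_{p-1}\equiv -p\h^{(2)}_{p-1}\bmod p^4$. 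It follows that $\h^{(1)}\equiv0\bmod p^4$ if and only if $\h^{(2)}\equiv0\bmod p^3$, and both conditions collapse to $\h^{(1)}_{p-1}\equiv 0\bmod p^4$.

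For the binomial equivalence, I expand
\[
{2p-1\choose p-1}=\alpha(1,p-1)=\sum_k p^k\h_{p-1}(k)
\]
(Remark \ref{rk alpha(1,p-1)}) modulo $p^5$. I then express $\h_{p-1}(k)$ through power sums by Newton's identities:
\[
\h(2)=\tfrac12\bigl((\h^{(1)})^2-\h^{(2)}\bigr),
\]
and similarly for $\h(3)$ and $\h(4)$. Using $\h^{(1)}\equiv0\bmod p^2$, $\h^{(2)},\h^{(4)}\equiv0\bmod p$, and $\h^{(3)}\equiv0\bmod p^2$, the terms $p^3\h(3)$ and $p^4\h(4)$ vanish modulo $p^5$. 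This gives
\[
{2p-1\choose p-1}\equiv 1+p\h^{(1)}-\tfrac{p^2}{2}\h^{(2)}\equiv 1+2p\h^{(1)}_{p-1}\bmod p^5,
\]
the last step by the relation $2\h^{(1)}\equiv -p\h^{(2)}\bmod p^4$ above. The binomial condition is therefore equivalent to $\h^{(1)}_{p-1}\equiv0\bmod p^4$. Checking the valuations carefully in this expansion is the delicate part of the argument.
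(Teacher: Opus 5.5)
Your part (b) contains a false step. The claim that $\h^{(2)}_{p-1}\equiv0\bmod p^2$ holds automatically for $p\ge5$ is wrong, and the pairing computation behind it is mistaken. For even $k$ one has $(p-i)^{-k}=+\,i^{-k}(1-p/i)^{-k}$, so summing over $i$ gives $\h^{(2)}_{p-1}=\h^{(2)}_{p-1}+2p\,\h^{(3)}_{p-1}+3p^2\h^{(4)}_{p-1}+\cdots$. The $\h^{(2)}_{p-1}$ terms cancel, and the identity says nothing about $\h^{(2)}_{p-1}$ itself. The sign flip that produces the factor $2$ in \eqref{eqn pair i_p-i} only happens for odd $k$.

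In fact Glaisher's congruence $\h^{(2)}_{p-1}\equiv\frac{2p}{3}B_{p-3}\bmod p^2$, quoted in the paper, shows this sum vanishes modulo $p^2$ only for Wolstenholme primes. Concretely, $\h^{(2)}_4=205/144$ has $5$-adic valuation exactly $1$. Your claim also contradicts your own part (c): combined with $2\h^{(1)}_{p-1}\equiv-p\,\h^{(2)}_{p-1}\bmod p^3$, it would force $\h^{(1)}_{p-1}\equiv0\bmod p^3$, so every prime $p\ge5$ would be a Wolstenholme prime.

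The conclusion of (b) is nevertheless true. The repair is the argument you already use in (c), one power lower, and this is exactly the paper's proof. By \eqref{eqn pair i_p-i} with $k=1$, $2\h^{(1)}_{p-1}\equiv-p\,\h^{(2)}_{p-1}-p^2\h^{(3)}_{p-1}\equiv-p\,\h^{(2)}_{p-1}\bmod p^3$, since $\h^{(3)}_{p-1}\equiv0\bmod p$ for $p\ge5$ by Lemma \ref{valuation}. So the two conditions $\h^{(1)}_{p-1}\equiv0\bmod p^3$ and $\h^{(2)}_{p-1}\equiv0\bmod p^2$ are equivalent to each other; neither is automatic.

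Parts (a) and (c) are correct. In (c) you take a different route from the paper for the last equivalence. The paper obtains $\binom{2p-1}{p-1}\equiv1+2p\,\h^{(1)}_{p-1}\bmod p^5$ from Zhao's Theorem 3.2. You instead expand $\alpha(1,p-1)=\sum_k p^k\h_{p-1}(k)$ and use Newton's identities, and your valuations check out:
\begin{enumerate}
\item $\h_{p-1}(3)\equiv0\bmod p^2$ needs the $p^2$-bound on $\h^{(3)}_{p-1}$;
\item $\h_{p-1}(4)\equiv0\bmod p$ needs $\h^{(4)}_{p-1}\equiv0\bmod p$;
\item both use $p\ge7$, and then $-\frac{p^2}{2}\h^{(2)}_{p-1}\equiv p\,\h^{(1)}_{p-1}\bmod p^5$ finishes it.
\end{enumerate}
This gives a self-contained argument that shows exactly where $p\ge7$ enters, at the cost of a short computation in place of a citation.
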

\begin{proof}
We first note that the lower bound on primes comes directly from the hypothesis $n< p$ in Corollary \ref{main_base_gen}. By Corollary \ref{main_base_gen}, $(0,0)\in D(p^3)$ if and only if $\h_{p-1}^{(1)}\equiv 0\bmod p^2$, which holds for all primes $p\geq 5$ by Wolstenholme theorem (see Theorem \ref{Wolstenholme}) which proves $(a)$. Next, by \eqref{eqn pair i_p-i} in Proof of Lemma \ref{valuation} with $k =1$ gives 
\begin{equation}\label{pair_full}
\h^{(1)}_{p-1}\equiv-\frac12\sum_{t = 1}^{n-1}p^t\,\h^{(t+1)}_{p-1}\bmod p^n.
\end{equation}
Taking $n = 3$, we obtain $\h^{(1)}_{p-1}\equiv-\frac p2\,\h^{(2)}_{p-1}\bmod p^3$ as $p^2\h^{(3)}_{p-1}\equiv0\bmod p^3$ follows from Lemma \ref{valuation}. Hence the two conditions  of Corollary \ref{main_base_gen} reduce to $\h_{p-1}^{(1)}\equiv 0\bmod p^3$ which proves $(b)$.

Next, we prove $(c)$. By Lemma \ref{valuation} $\h^{(3)}_{p-1}\equiv0\bmod p^2$ and $\h^{(4)}_{p-1}\equiv0\bmod p$, which implies $p^2\h_{p-1}^{(3)} + p^3\h_{p-1}^{(4)}\equiv 0\bmod p^4$. Further \eqref{pair_full} gives us $\h_{p-1}^{(1)}\equiv -\frac 12 p\h_{p-1}^{(2)}\bmod p^4$. Hence, by Corollary \ref{main_base_gen} the first equivalence holds as the condition $\h_{p-1}^{(2)}\equiv 0\bmod p^3$ reduce to $\h_{p-1}^{(1)}\equiv 0\bmod p^4$ and the last condition $\h_{p-1}^{(3)}\equiv 0\bmod p^2$ always holds. By \cite[Theorem~3.2]{Zhao} we get
\begin{align*}
    \frac{{2p\choose p}}{{2\choose 1}}&\equiv 1+2p^3\omega_p\bmod p^5,\quad\text{where } \omega_p\equiv \h_{p-1}^{(1)}/p^2\bmod p^2\\
    \implies {2p-1\choose p-1}&\equiv 1+2p\h_{p-1}^{(1)}\bmod p^5.
\end{align*}
Hence ${2p-1\choose p-1}\equiv1\bmod p^5$ if and only if $\h_{p-1}^{(1)}\equiv 0\bmod p^4$,
which gives the last equivalence in $(c)$.
\end{proof}

\subsection{The centroid}\label{centroid sec}
In this subsection, we study for which primes $p$ the centroid belongs to $D(p^n)$. Remark \ref{rk centroid} shows that in general it does not. In the rest of the subsection, we assume that $p\equiv 1\bmod 3$. We have the following result.


\begin{theorem}\label{centroid_n}
Let $p\equiv1\bmod3$, $m=\frac{p-1}{3}$, and assume $n<p$. Then $(2m,m)\in D(p^n)$ if and only if
\begin{enumerate}
  \item[(a)] $\h^{(l)}_{2m}\equiv\h^{(l)}_m\bmod p^{n-l}$ for all $1\leq l\leq n-1$; and
  \item[(b)] $\h^{(l)}_m\equiv0\bmod p^{n-l}$ for all even $l$ with $2\leq l\leq n-1$.
\end{enumerate}
\end{theorem}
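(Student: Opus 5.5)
Apply Corollary \ref{crit_power} with $(r,s)=(2m,m)$, so that $r-s=m$. Condition (i) then reads $\h^{(l)}_{2m}\equiv\h^{(l)}_m\bmod p^{n-l}$ for $1\le l\le n-1$, which is exactly (a). Everything therefore reduces to proving that, when (a) holds, condition (ii) of Corollary \ref{crit_power} is equivalent to (b). Here the relevant index is $r=2m=p-1-m$. For (ii) it is more convenient to work with the polynomial form, i.e. condition (2) of Theorem \ref{alpha_crit_n}, via Lemma \ref{shift_power}.

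The key identity comes from Lemma \ref{alpha_factor}(2) with index $m$:
\[
\alpha(x,2m)\,\alpha(-1-x,m)=\alpha(x,p-1)\,\alpha(-1,m).
\]
Under (a), via Lemma \ref{depth_power} and coefficient comparison, we have $\alpha(x,2m)\equiv\alpha(x,m)\bmod p^n$. Hence
\[
\alpha(x,p-1)\equiv \frac{\alpha(x,m)\,\alpha(-1-x,m)}{\alpha(-1,m)}\bmod p^n .
\]
Substituting this into condition (2), with $\alpha(\cdot,r)$ replaced by $\alpha(\cdot,m)$, turns (2) into
\[
\alpha(-1-x,m)\,\alpha(x+1,m)\equiv\alpha(1,m)\,\alpha(-1,m)\bmod p^n .
\]
The right-hand side is the value of the left-hand side at $x=0$. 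So, after the change of variable $y=x+1$, condition (2) is equivalent to $\alpha(y,m)\alpha(-y,m)$ being constant modulo $p^n$, i.e. $\alpha(y,m)\alpha(-y,m)\equiv 1\bmod p^n$ (its value at $y=0$ is $1$).

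Next I would translate $\alpha(y,m)\alpha(-y,m)\equiv1\bmod p^n$ into harmonic sums. As in the proof of Lemma \ref{shift_power}, apply the $p$-adic logarithm together with Lemma \ref{zero_phi_one}; the degree is at most $n-1<p$, and $n<p$ keeps $v_p(e_l)\ge n$ for $l\ge n$. This makes the condition equivalent to
\[
L_m(y)+L_m(-y)\equiv 0 \bmod p^n,\qquad\text{i.e.}\qquad 2\sum_{l\ \mathrm{even}} e_l(m)\,y^l\equiv0 .
\]
Since $e_l(m)=\pm p^l\h^{(l)}_m/l$ and $l<p$ is a unit, this says exactly $\h^{(l)}_m\equiv0\bmod p^{n-l}$ for all even $2\le l\le n-1$, which is (b).

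The main obstacle is the first step: justifying that (a) yields $\alpha(x,2m)\equiv\alpha(x,m)$ as polynomials modulo $p^n$, and that the substitution into (2) is reversible. For the first point, coefficients of $\alpha$ are $p^k\h(k)$, so Lemma \ref{depth_power} gives it directly. For reversibility, all the $\alpha(\cdot,m)$ factors involved are units, and evaluating at integers together with Lemma \ref{zero_phi_one} handles the passage from polynomial identities to values and back. Evaluating at every integer $x\ge0$ suffices because the degree is less than $p$. One should also note that the necessity direction uses (a), which holds whenever $(2m,m)\in D(p^n)$ by Theorem \ref{alpha_crit_n}(1), so the case split is consistent.
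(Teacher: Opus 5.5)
Your proof is correct, but it takes a different route from the paper's.

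\textbf{The paper's route.} It stays at the level of harmonic sums. It starts from condition (ii) of Corollary~\ref{crit_power} and replaces $\h^{(i+t)}_{2m}$ by $\h^{(i+t)}_m$ using (a). It then expands $\h^{(i)}_{p-1}=\h^{(i)}_{2m}+\sum_{j\le m}(p-j)^{-i}$ as a truncated $p$-adic series. This produces the triangular system \eqref{cen_even}, in which each congruence mixes several even-index sums $\h^{(l)}_m$, so extracting (b) needs a downward induction on $l$.

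\textbf{Your route.} You work with the polynomial condition (2) of Theorem~\ref{alpha_crit_n}. You use the exact identity $\alpha(x,2m)\,\alpha(-1-x,m)=\alpha(x,p-1)\,\alpha(-1,m)$ from Lemma~\ref{alpha_factor}(2), available because $p-1-m=2m$; this is the polynomial form of the pairing $j\leftrightarrow p-j$. Under (a) it turns (2) into $\alpha(y,m)\,\alpha(-y,m)\equiv1\bmod p^n$, which in effect is a converse of Lemma~\ref{alpha_odd} in this situation. Since $\alpha(y,m)\,\alpha(-y,m)=\prod_{i\le m}\bigl(1-p^2y^2/i^2\bigr)$ is even, its logarithm is $2\sum_{l\ \mathrm{even}}e_l(m)\,y^l$. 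The conditions therefore decouple, and (b) follows coefficient by coefficient with no induction.

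The points you flagged all check out:
\begin{itemize}
\item $\alpha(x,m)\equiv 1+p\,g(x)$ is a unit in $(\mathbb{Z}/p^n\mathbb{Z})[x]$, so cancelling it is legitimate. Alternatively, cancel after evaluating at integers and apply Lemma~\ref{zero_phi_one}.
\item Every step is reversible once (a) holds.
\item The truncation $\log\alpha(c,m)\equiv L_m(c)\bmod p^n$ is also valid for negative integers $c$, which you need for $\alpha(-y,m)$.
\end{itemize}

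\textbf{Comparison.} Your argument is shorter and explains structurally why only the even-order sums appear. The paper's approach has the advantage of producing the explicit expansion \eqref{eqn paring i_p-i_any_i}, which it reuses in the proofs of Corollary~\ref{centroid_small_n}.
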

\begin{proof}
By Corollary \ref{crit_power} $(2m,m)\in D(p^n)$ if and only if 
\begin{align*}
    \nonumber \h_{2m}^{(l)}&\equiv\h_m^{(l)}\bmod p^{n-l}\quad\text{for all } 1\leq l\leq n-1\\
   \text{and}\quad \h_{p-1}^{(i)}&\equiv\sum_{t =1}^{n-i-1}(-1)^{t+1}{i+t-1\choose t}p^t\h_{2m}^{(i+t)}\bmod p^{n-i}\quad\text{for all } 1\leq i\leq n-2.
\end{align*}
The first congruence is  exactly the same as $(a)$ and further using $(a)$ second reduce to the following
\begin{align}\label{eqn centroid_n I}
    \h_{p-1}^{(i)}\equiv\sum_{t =1}^{n-i-1}(-1)^{t+1}{i+t-1\choose t}p^t\h_{m}^{(i+t)}\bmod p^{n-i}\quad\text{for all } 1\leq i\leq n-2.
\end{align}
Next, we will show that $(b)$ is equivalent to the above congruence. Using expansion of $(p-i)^i$ for any $i$ as used to deduce \eqref{eqn pair i_p-i} for odd $k$ in the proof of Lemma \ref{valuation}, we get 
\begin{align}\label{eqn paring i_p-i_any_i}
    \h_{p-1}^{(i)} = \h_{2m}^{(i)}+\sum_{j = 1}^{m}\frac{1}{(p-j)^i}\equiv\h_{2m}^{(i)}+(-1)^i\sum_{t = 0}^{n-i-1}{i+t-1\choose t}p^t\h_m^{(i+t)}\bmod p^{n-i}.
\end{align}
Further the above congruence together with $(a)$ implies that \eqref{eqn centroid_n I} is equivalent to the following
\begin{align}\label{cen_even}
\nonumber& \sum_{t=0}^{n-i-1}\bigl((-1)^i+(-1)^t\bigr){i+t-1\choose t}\,p^t\,\h^{(i+t)}_m\equiv0\bmod p^{n-i}\\
\iff & 2(-1)^i\sum_{t=0,\; i+t \text{ even}}^{n-i-1}{i+t-1\choose t}\,p^t\,\h^{(i+t)}_m\equiv0\bmod p^{n-i}\quad\text{for all }1\leq i\leq n-2.
\end{align}
If (b) holds, then $p^t\h^{(i+t)}_m\equiv0\bmod p^{n-i}$ for all $0\leq t\leq n-i-1$ with $i+t$ even. Hence \eqref{cen_even} holds.

Conversely, suppose \eqref{cen_even} holds for all $1\leq i\leq n-2$. We show $\h^{(l)}_m\equiv0\bmod p^{n-l}$ for every even $2\leq l\leq n-1$ by downward induction on $l$. Take $i=l-1$ in \eqref{cen_even}, which is odd with $1\leq i\leq n-2$. Then only odd $t$ contribute, and dividing by $-2p$ we get
$$(l-1)\,\h^{(l)}_m+\sum_{\substack{t=3\\ t \text{ odd}}}^{n-l}{l+t-2\choose t}\,p^{t-1}\,\h^{(l+t-1)}_m\equiv0\bmod p^{n-l}.$$
Let $l_0\leq n-1$ be the largest even integer. This implies that $n-1< l_0+1$ giving summation empty when $l = l_0$. Hence, taking $l = l_0$ gives $\h_m^{(l_0)}\equiv 0\mod p^{n-l_0}$ which proves the claim for $l=l_0$. Next let $l< l_0$. Each $l+t-1$ is even and greater than $l$ as $t\geq 3$, so by induction $p^{t-1}\h^{(l+t-1)}_m\equiv0\bmod p^{(t-1)+(n-l-t+1)}=p^{n-l}$ for all odd $t\geq 3$, implies that the summation vanishes modulo $p^{n-l}$. Hence, we get $\h^{(l)}_m\equiv0\bmod p^{n-l}$ as $l-1\leq n-2< p$ is a unit. This proves (b).
\end{proof}

\begin{rk}\label{rk_h_bernoulli}
For general $n\leq p-1$, Theorem \ref{centroid_n} can be expressed entirely in terms of Bernoulli polynomials. For $i\leq p-1$, since $(1-i^{p-1})^{n-l}\equiv0\bmod p^{n-l}$, one has $i^{-l}\equiv\sum_{s=1}^{n-l}(-1)^{s-1}{n-l\choose s}\,i^{\,s(p-1)-l}\bmod p^{n-l}$, and summing over $1\leq i\leq c$ with $c\in\{m,2m\}$ and using Theorem \ref{bernoulli poly} gives, with $a_s=s(p-1)-l+1$,
$$\h^{(l)}_c\equiv\sum_{s=1}^{n-l}(-1)^{s-1}{n-l\choose s}\frac{B_{a_s}(c+1)-B_{a_s}}{a_s}\bmod p^{n-l},
  \qquad c\in\{m,2m\}.$$\pagebreak\\
\end{rk}

Next we prove Corollary \ref{centroid_small_n}. 
\begin{proof}[Proof of Corollary \ref{centroid_small_n} $(1)$]
By Theorem \ref{Wolstenholme} we have $\h^{(1)}_{p-1}\equiv0\bmod p^2$ and $\h^{(2)}_{p-1}\equiv0\bmod p$. These together with \eqref{eqn paring i_p-i_any_i} for $i =1,2$ and $n =3$ give
$$\h^{(1)}_{2m}\equiv\h^{(1)}_m+p\,\h^{(2)}_m\bmod p^2\quad\text{and}\quad
\h^{(2)}_{2m}\equiv-\h^{(2)}_m\bmod p .$$
Hence the two conditions $\h^{(1)}_{2m}\equiv\h^{(1)}_m\bmod p^2$, $\h^{(2)}_{2m}\equiv\h^{(2)}_m\bmod p$ are equivalent to $\h^{(2)}_m\equiv0\bmod p$. By Theorem \ref{centroid_n} with $n=3$, $(2m,m)\in D(p^3)$ if and only if  $\h^{(2)}_m\equiv0\bmod p$. Finally, by Remark \ref{rk_h_bernoulli} with $n=3$ and $l=2$, $\h^{(2)}_m\equiv\frac12B_{p-2}(1/3)\bmod p$ which proves the desired result.
\end{proof}

\begin{proof}[Proof of Corollary \ref{centroid_small_n} $(2)$]
By Theorem \ref{centroid_n}, $(2m,m)\in D(p^4)$ if and only if 
\begin{align}\label{eqn centroid_n4_1}
   \h_{m}^{(2)}\equiv 0\bmod p^2\quad \text{and } \h_{2m}^{(l)}\equiv\h_{m}^{(l)}\bmod p^{4-l}\quad\text{for all } 1\leq l\leq 3. 
\end{align}
We note that the first congruence is part of the statement, and the second congruence for $l=3$ always holds. This can be seen because Lemma \ref{valuation} gives $\h^{(3)}_{p-1}\equiv0\bmod p$ and this together with \eqref{eqn paring i_p-i_any_i} for $n=4$ and $i=3$, gives $\h^{(3)}_{2m}\equiv\h^{(3)}_m\bmod p$. Next, assume $\h^{(2)}_m\equiv0\bmod p^2$, which is one of the conditions in the statement. We will show that the two conditions $\h_{2m}^{(l)}\equiv\h_m^{(l)}\bmod p^{4-l}$ for $l=1,2$ are equivalent to one condition that $p$ is Wolstenholme prime.

Again applying \eqref{eqn paring i_p-i_any_i} with $n=4$ and $i=1,2$ we get
\begin{align*}
\h^{(2)}_{2m}&\equiv\h^{(2)}_{p-1}-\h^{(2)}_m-2p\,\h^{(3)}_m\bmod p^2,\\
\h^{(1)}_{2m}&\equiv\h^{(1)}_{p-1}+\h^{(1)}_m+p\,\h^{(2)}_m+p^2\,\h^{(3)}_m\bmod p^3 .
\end{align*}
Assuming $\h^{(2)}_m\equiv0\bmod p^2$, the two conditions $\h_{2m}^{(l)}\equiv\h_m^{(l)}\bmod p^{4-l}$ for $l=1,2$ equivalent to the following two conditions
$$\h^{(2)}_{p-1}\equiv2p\,\h^{(3)}_m\bmod p^2\quad\text{and}\quad
\h^{(1)}_{p-1}\equiv-p^2\,\h^{(3)}_m\bmod p^3.$$
By Remark \ref{rk_h_bernoulli} with $n=4$, $l=3$ and $c=m$, $c+1\equiv \frac23\bmod p$, we get
\begin{align}\label{eqn centroid_n4_2}
    \h^{(3)}_m\equiv \frac{1}{p-3}\bigl(B_{p-3}(2/3)- B_{p-3}\bigr)\bmod p.
\end{align}
By Raabe's multiplication theorem \cite[\S4, p.~356]{raabe}, we have $B_k(dx)=d^{k-1}\sum_{i=0}^{d-1}B_k\bigl(x+\frac{i}{d}\bigr)$ for any integer $d\geq1$. Applying this with $x=0$ and $d=3$, and using $B_k(\tfrac23)=B_k(\tfrac13)$ for even $k$, we get $B_k(\tfrac23)=\frac12(3^{1-k}-1)B_k$. For $k=p-3$, since $3^{4-p}\equiv27\bmod p$, this gives $B_{p-3}(2/3)\equiv 13 B_{p-3}\bmod p$. Therefore, \eqref{eqn centroid_n4_2} gives $\h^{(3)}_m\equiv-4B_{p-3}\bmod p$. Hence the two conditions become
$$\h^{(2)}_{p-1}\equiv -8pB_{p-3}\bmod p^2\quad\text{and}\quad
\h^{(1)}_{p-1}\equiv 4p^2B_{p-3}\bmod p^3.$$
By Glaisher's congruences \cite{glaisher1900}, $\h^{(2)}_{p-1}\equiv\frac{2p}{3}B_{p-3}\bmod p^2$ and $\h^{(1)}_{p-1}\equiv-\frac{p^2}{3}B_{p-3}\bmod p^3$. Substituting, both conditions become $\frac{13}{3}B_{p-3}\equiv0\bmod p$. For $p=13$, a direct computation gives $\h^{(2)}_{m}=\frac{205}{144}\not\equiv0\bmod 13$, so both sides of the claim fail. For $p\neq13$, $\frac{13}{3}B_{p-3}\equiv0\bmod p$ if and only if $B_{p-3}\equiv0\bmod p$, i.e., $p$ is a Wolstenholme prime. This proves the desired claim.
\end{proof}


\begin{rk}[computational]
For every prime $5\leq p\leq 250000$,
\[
    |D(p^3)| = \begin{cases}
        3 & \text{if } p \neq 205129,\\
        4 & \text{if } p = 205129,
    \end{cases}
\]
the three elements being the vertices $(0,0)$, $(p-1,0)$, $(p-1,p-1)$ in all cases, and the fourth element for $p = 205129$ being the centroid $(136752, 68376)$.
\end{rk}

\subsection{The edge midpoints}\label{edge midpoint sec}
In this subsection, we will study for which primes the edge midpoints of Pascal's triangle with $p$ rows belong to $D(p^n)$. Let $e = (p-1)/2$. Then the set of edge midpoints is given by
$$\{(e,0),\ (e, e),\ (p-1, e)\},$$
which is a single orbit under the action $D_3$ by Corollary \ref{cor_orbit}. We have the following result for edge mid-points. 

\begin{theorem}\label{edge-midpoints}
Let $e=\frac{p-1}{2}$ and $n<p$. Then the edge midpoints lie in $D(p^n)$ if and only if
$$\h^{(l)}_e\equiv0\bmod p^{n-l}\qquad\text{for all } 1\leq l\leq n-1 .$$
Moreover, in this case $(0,0)\in D(p^n)$.
\end{theorem}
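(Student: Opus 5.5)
By Corollary \ref{cor_orbit}, the three edge midpoints form one $D_3$-orbit. So it suffices to study the single point $(r,s)=(e,0)$, for which $r-s=e$ and $s=0$. Corollary \ref{crit_power} applied to $(e,0)$ gives: $(e,0)\in D(p^n)$ if and only if
\begin{enumerate}
\item[(i)] $\h^{(l)}_e\equiv\h^{(l)}_0=0\bmod p^{n-l}$ for all $1\leq l\leq n-1$; and
\item[(ii)] $\h^{(i)}_{p-1}\equiv\sum_{t=1}^{n-i-1}(-1)^{t+1}{i+t-1\choose t}p^t\h^{(i+t)}_e\bmod p^{n-i}$ for all $1\leq i\leq n-2$.
\end{enumerate}
Condition (i) is exactly the stated condition. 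So the whole proof reduces to showing that (i) implies (ii), and to extracting the "moreover" part along the way.

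Assume (i). Each term $p^t\h^{(i+t)}_e$ then has valuation at least $t+(n-i-t)=n-i$. Hence the right side of (ii) vanishes modulo $p^{n-i}$, and (ii) becomes $\h^{(i)}_{p-1}\equiv0\bmod p^{n-i}$ for $1\leq i\leq n-2$. To get this, I would split $\h^{(i)}_{p-1}=\h^{(i)}_e+\sum_{j=1}^{e}(p-j)^{-i}$, since $\{e+1,\dots,p-1\}=\{p-j:1\leq j\leq e\}$. Then I would expand $(p-j)^{-i}$ as in the derivation of \eqref{eqn paring i_p-i_any_i}, which gives
$$\h^{(i)}_{p-1}\equiv\h^{(i)}_e+(-1)^i\sum_{t=0}^{n-i-1}{i+t-1\choose t}p^t\h^{(i+t)}_e\bmod p^{n-i}.$$
Under (i), every term on the right has valuation at least $n-i$, because $p^t\h^{(i+t)}_e\equiv0\bmod p^{n-i}$ and $\h^{(i)}_e\equiv0\bmod p^{n-i}$. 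Hence $\h^{(i)}_{p-1}\equiv0\bmod p^{n-i}$ for all $1\leq i\leq n-1$. This gives (ii), and so $(e,0)\in D(p^n)$.

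The same congruences $\h^{(l)}_{p-1}\equiv0\bmod p^{n-l}$ for $1\leq l\leq n-2$ are precisely condition (3) of Corollary \ref{main_base_gen}. That corollary then gives $(0,0)\in D(p^n)$, which proves the "moreover" statement.

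The forward direction is immediate from (i) of Corollary \ref{crit_power}. So I expect no real obstacle. The only care needed is the bookkeeping in the expansion: the tail terms $t\geq n-i$ drop out because $p^t$ alone already exceeds the modulus, and the indices $i+t\leq n-1<p$ stay in the range where (i) applies. The special case $n=3$ (Corollary \ref{edge midpoint_p^3}) would then follow from Lehmer's Theorem \ref{Lehmer} together with $\h^{(2)}_e\equiv0\bmod p$. But that is outside the present statement.
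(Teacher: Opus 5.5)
Your proof is correct and has the same structure as the paper's. Both reduce to $(e,0)$ by Corollary~\ref{cor_orbit}, note that the first condition of the criterion is exactly $\h^{(l)}_e\equiv0\bmod p^{n-l}$, and show that this forces the second condition through the pairing $j\leftrightarrow p-j$; in both, the ``moreover'' part comes from $\h^{(i)}_{p-1}\equiv0\bmod p^{n-i}$, equivalently $\alpha(x,p-1)\equiv1\bmod p^n$. The only difference is that you work with harmonic sums (Corollary~\ref{crit_power}, the expansion of $(p-j)^{-i}$, and Corollary~\ref{main_base_gen}), while the paper works with the polynomials $\alpha$ (Theorem~\ref{alpha_crit_n} and the identity $\alpha(x,e)\,\alpha(-1-x,e)=\alpha(x,p-1)\,\alpha(-1,e)$ from Lemma~\ref{alpha_factor}); the paper's version is slightly more economical, since it avoids the $p$-adic logarithm argument of Lemma~\ref{shift_power} on which Corollary~\ref{crit_power} rests.
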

\begin{proof}
By Corollary \ref{cor_orbit}, the edge midpoints lie in $D(p^n)$ if and only if $(e,0)\in D(p^n)$. Further, Theorem \ref{alpha_crit_n} implies that $(e,0)\in D(p^n)$ if and only if 
$$\alpha(x,e)\equiv1\bmod p^n\quad\text{and}\quad \alpha(x,p-1)\,\alpha(x+1,e)\equiv\alpha(1,e)\,\alpha(x,e)\bmod p^n .$$
Suppose $\alpha(x,e)\equiv 1\bmod p^n$ holds. Substituting $x+1$ and $1$ for $x$, we also have $\alpha(x+1,e)\equiv\alpha(1,e)\equiv1\bmod p^n$. Hence, the second congruence reduces to $\alpha(x,p-1)\equiv1\bmod p^n$. Moreover, this congruence follows from $\alpha(x,e)\equiv1\bmod p^n$. Indeed, Lemma \ref{alpha_factor}$(2)$ with $m=e$, so that $p-1-m=e$, gives $\alpha(x,e)\,\alpha(-1-x,e)=\alpha(x,p-1)\,\alpha(-1,e)$, where $\alpha(x,e)\equiv \alpha(-1-x,e)\equiv\alpha(-1,e) \equiv1\bmod p^n$. Hence $(e,0)\in D(p^n)$ if and only if $\alpha(x,e)\equiv1\bmod p^n$. Comparing the coefficients of $x^k$ and dividing by $p^k$, this says $\h_e(k)\equiv0\bmod p^{n-k}$ for $1\leq k\leq n-1$. By Lemma \ref{depth_power} with $j'=0$ and $j=e$, this is equivalent to
$$\h^{(l)}_e\equiv0\bmod p^{n-l}\qquad\text{for all } 1\leq l\leq n-1 .$$
Finally, as seen above, $(e,0)\in D(p^n)$ implies $\alpha(x,p-1)\equiv1\bmod p^n$, which by Theorem \ref{alpha_crit_n} with $(r,s)=(0,0)$ is equivalent to $(0,0)\in D(p^n)$.
\end{proof}

Next, we prove Corollary \ref{edge midpoint_p^3}. 
\begin{proof}[Proof of Corollary \ref{edge midpoint_p^3}]
By Theorem \ref{edge-midpoints} with $n=3$, the edge midpoints lie in $D(p^3)$ if and only if
\begin{align}\label{eqn edge-midpoint_n2}
    \h_e^{(1)}\equiv 0\bmod p^2\quad\text{and}\quad \h_e^{(2)}\equiv 0\bmod p.
\end{align}
By pairing $i$ with $p-i$ and using $(p-i)^{-2}\equiv i^{-2}\bmod p$, we get $\h^{(2)}_{p-1}\equiv2\,\h^{(2)}_e\bmod p$ and $\h^{(2)}_{p-1}\equiv 0\bmod p$ by Wolstenholme theorem, as $p\geq 5$. Hence, second congruence in \eqref{eqn edge-midpoint_n2} always holds. This proves the first equivalence.

Next, by Lehmer's congruence (Theorem \ref{Lehmer}), we have
\begin{align}\label{lehmer eqn}
    \h^{(1)}_e\equiv -2q_p(2)+p\,q_p(2)^2\bmod{p^2}.
\end{align}
If $\h^{(1)}_e\equiv 0 \bmod p^2$, then reducing \eqref{lehmer eqn} modulo $p$ gives
$q_p(2)\equiv 0 \bmod p$. Let $t$ be an integer such that $q_p(2)=pt$. Substituting into
\eqref{lehmer eqn}, we obtain $0\equiv-2pt+p^3t^2\equiv-2pt\bmod p^2$, so
$t\equiv 0\bmod p$. Hence $q_p(2)\equiv 0\bmod{p^2}$, which implies that
$2^{p-1}=1+p\,q_p(2)\equiv 1 \bmod p^3$. Conversely, if $2^{p-1}\equiv1\bmod p^3$, then
$q_p(2)\equiv0\bmod p^2$, and both terms on the right of \eqref{lehmer eqn} vanish modulo
$p^2$, so $\h^{(1)}_e\equiv0\bmod p^2$. This proves the second equivalence.
\end{proof}

\begin{cor}\label{edge_n4_free}
The edge midpoints lie in $D(p^4)$ if and only if $p$ is a Wolstenholme prime and
$2^{p-1}\equiv1\bmod p^4$.
\end{cor}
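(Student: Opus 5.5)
By Theorem~\ref{edge-midpoints} with $n=4$ (allowed since $p\geq 5$), the edge midpoints lie in $D(p^4)$ if and only if
$$\h^{(1)}_e\equiv0\bmod p^3,\qquad \h^{(2)}_e\equiv0\bmod p^2,\qquad \h^{(3)}_e\equiv0\bmod p .$$
The plan is to translate these three conditions one at a time into statements about $B_{p-3}$ and the Fermat quotient $q_p(2)$, working from the highest order down.

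\emph{Step 1: the order-$3$ condition.} Apply Remark~\ref{rk_h_bernoulli} with $n=4$, $l=3$, $c=e$, so $c+1\equiv\frac12\bmod p$. This gives
$$\h^{(3)}_e\equiv\frac{1}{p-3}\bigl(B_{p-3}(1/2)-B_{p-3}\bigr)\bmod p .$$
Raabe's multiplication theorem with $d=2$ gives $B_k(1/2)=(2^{1-k}-1)B_k$. For $k=p-3$ we have $2^{4-p}\equiv 8\bmod p$, so $B_{p-3}(1/2)\equiv 7B_{p-3}$. Hence $\h^{(3)}_e\equiv-2B_{p-3}\bmod p$, which is Lehmer's classical value. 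So this condition holds if and only if $B_{p-3}\equiv0\bmod p$, that is, if and only if $p$ is a Wolstenholme prime (via Glaisher, as recalled before Corollary~\ref{vertex_small_n}).

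\emph{Step 2: the order-$2$ condition.} Use \eqref{eqn paring i_p-i_any_i}, but split at $e$ instead of at $2m$ (same derivation, pairing $j$ with $p-j$). With $n=4$, $i=2$ this gives
$$\h^{(2)}_{p-1}\equiv 2\h^{(2)}_e+2p\,\h^{(3)}_e\bmod p^2 .$$
Glaisher gives $\h^{(2)}_{p-1}\equiv\frac{2p}{3}B_{p-3}\bmod p^2$. So once $p$ is Wolstenholme, both $\h^{(2)}_{p-1}$ and $p\,\h^{(3)}_e$ vanish modulo $p^2$, and $\h^{(2)}_e\equiv0\bmod p^2$ follows automatically. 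Thus the order-$2$ condition adds nothing beyond Step 1.

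\emph{Step 3: the order-$1$ condition.} This is the main obstacle: we need $\h^{(1)}_e$ modulo $p^3$, while Theorem~\ref{Lehmer} as stated only gives it modulo $p^2$. I would invoke the finer congruence from Lehmer's paper \cite{lehmer}, valid for $p\geq5$:
$$\h^{(1)}_e\equiv-2q_p(2)+p\,q_p(2)^2-\tfrac23p^2q_p(2)^3-\tfrac7{12}p^2B_{p-3}\bmod p^3 .$$
Under the Wolstenholme condition the Bernoulli term drops out. Then I argue exactly as in the proof of Corollary~\ref{edge midpoint_p^3}, bootstrapping one power of $p$ at a time:
\begin{itemize}
\item $\h^{(1)}_e\equiv0\bmod p^3$ forces $q_p(2)\equiv0\bmod p$;
\item writing $q_p(2)=pt$, it then forces $t\equiv0\bmod p$;
\item writing $t=pu$, it finally forces $u\equiv0\bmod p$.
\end{itemize}
So $q_p(2)\equiv0\bmod p^3$, i.e.\ $2^{p-1}\equiv1\bmod p^4$. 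Conversely, if $q_p(2)\equiv0\bmod p^3$ and $B_{p-3}\equiv0\bmod p$, every term on the right vanishes modulo $p^3$.

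If citing the mod $p^3$ congruence is not acceptable, I would derive it instead. One route is to take the $p$-adic logarithm of $2^{p-1}=\prod_{j=1}^{e}\frac{2j\,(p-2j)}{\ldots}$, or more simply to expand $\log\prod_{j\le e}(1-p/j)$ via $\binom{p-1}{e}$ and use Morley-type identities. The harmonic sums $\h^{(2)}_e$ and $\h^{(3)}_e$ that appear would then be controlled by Steps 1--2. Combining the three steps gives the corollary.
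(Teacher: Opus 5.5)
Your proposal is correct and follows the paper's proof step for step. It uses the same reduction via Theorem~\ref{edge-midpoints}, the same Raabe computation giving $\h^{(3)}_e\equiv-2B_{p-3}\bmod p$, the same pairing argument that makes the order-$2$ condition automatic, and the same mod $p^3$ congruence for $\h^{(1)}_e$. The paper cites that congruence from Z.-H.~Sun, and your attribution to Lehmer is also fine. Your three-stage bootstrapping works but can be shortened: once $q_p(2)\equiv0\bmod p$, the congruence already collapses to $\h^{(1)}_e\equiv-2q_p(2)\bmod p^3$. The fallback derivation sketch is not needed.
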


\begin{proof}
By Theorem \ref{edge-midpoints} with $n=4$, the edge midpoints lie in $D(p^4)$ if and only if
\begin{align}\label{eqn edge_mid_pt_n4}
    \h^{(1)}_e\equiv0\bmod p^3,\; \h^{(2)}_e\equiv0\bmod p^2\quad\text{and}\quad \h^{(3)}_e\equiv0\bmod p.
\end{align}
Arguing as in Remark \ref{rk_h_bernoulli} with $e$ in place of $m$ and using $e+1\equiv\frac12\bmod p$,
we get $\h^{(3)}_e\equiv\frac{B_{p-3}(1/2)-B_{p-3}}{p-3}\bmod p$. By Raabe's multiplication theorem
\cite[\S4, p.~356]{raabe}, we have $B_k(dx)=d^{k-1}\sum_{i=0}^{d-1}B_k\bigl(x+\frac{i}{d}\bigr)$ for any integer $d\geq1$. Applying this with $x=0$ and $d=2$, we get $B_k=2^{k-1}\bigl(B_k+B_k(\tfrac12)\bigr)$, i.e., $B_k(\tfrac12)=(2^{1-k}-1)B_k$. For $k=p-3$, since $B_{p-3}$ is $p$-integral and $2^{4-p}\equiv8\bmod p$, this gives $B_{p-3}(\tfrac12)\equiv7B_{p-3}\bmod p$. Therefore, we get $\h^{(3)}_e\equiv-2B_{p-3}\bmod p$.
So the last condition in \eqref{eqn edge_mid_pt_n4} says that $B_{p-3}\equiv0\bmod p$, i.e., $p$ is a Wolstenholme prime. We assume this for rest for the proof. Then Glaisher's congruence $\h^{(2)}_{p-1}\equiv\frac{2p}{3}B_{p-3}\bmod p^2$ gives $\h^{(2)}_{p-1}\equiv0\bmod p^2$. Pairing $i$ with $p-i$ gives
$\h^{(2)}_{p-1}\equiv2\,\h^{(2)}_e+2p\,\h^{(3)}_e\bmod p^2$, so $\h^{(2)}_e\equiv0\bmod p^2$, and the
second condition in \eqref{eqn edge_mid_pt_n4} holds automatically. Finally, by Sun \cite[Theorem 5.2(c)]{Sun2000},
$$\h^{(1)}_e\equiv-2q+pq^2-\tfrac23p^2q^3-\tfrac{7}{12}p^2B_{p-3}\bmod p^3,\qquad q=q_p(2),$$
and the last term vanishes as $B_{p-3}\equiv 0\bmod p$. Reducing modulo $p$, both $\h^{(1)}_e\equiv0\bmod p^3$
and $2^{p-1}\equiv1\bmod p^4$ imply $q\equiv0\bmod p$. In that case $pq^2\equiv p^2q^3\equiv0\bmod p^3$,
so $\h^{(1)}_e\equiv-2q\bmod p^3$. Hence $\h^{(1)}_e\equiv0\bmod p^3$ if and only if
$q\equiv0\bmod p^3$, i.e., $2^{p-1}=1+pq\equiv1\bmod p^4$.
\end{proof}


{\bf Acknowledgements.} We acknowledge the work in \cite{rowland}, which provides the basis for our generalization. The author thanks Shalini Bhattacharya for suggesting the problem and for helpful discussions. The author also acknowledges that this work was initiated during his postdoctoral tenure at IISER Tirupati and gratefully acknowledges the institute's support during that period.\vspace{.5em}\\

{\bf AI Declaration.} The results for \(n=3\) were obtained without the assistance of AI tools. Claude (Anthropic) was subsequently used to explore ideas for generalizing these results to arbitrary \(n\). All proofs were written by the author, using AI tools to improve the language and presentation of the manuscript.
 

\bibliographystyle{plain}
\bibliography{lucas.bib}

@article {rowland,
    AUTHOR = {Rowland, Eric},
     TITLE = {Lucas' theorem modulo {$p^2$}},
   JOURNAL = {Amer. Math. Monthly},
  FJOURNAL = {American Mathematical Monthly},
    VOLUME = {129},
      YEAR = {2022},
    NUMBER = {9},
     PAGES = {846--855},
}

@article{Wolstenholme,
    author = {Wolstenholme, J.},
    title = {On certain properties of prime numbers},
    journal = {Q. J. Pure Appl. Math.},
    VOLUME = {5},
    year = {1862},
    PAGES = {35--39},
}

@article {sun2010,
    AUTHOR = {Sun, Zhiwei},
     TITLE = {New series for some special values of {$L$}-functions},
   JOURNAL = {Nanjing Daxue Xuebao Shuxue Bannian Kan},
  FJOURNAL = {Nanjing Daxue Xuebao. Shuxue Bannian Kan. Nanjing University.
              Journal. Mathematical Biquarterly},
    VOLUME = {32},
      YEAR = {2015},
    NUMBER = {2},
     PAGES = {189--218},
      ISSN = {0469-5097},
   MRCLASS = {11B65 (11M06)},
  MRNUMBER = {3616300},
}

@article{Romeo,
    author = {Me\v{s}trovi\'c, Romeo},
    title = {Lucas’ theorem: its generalizations, extensions and applications (1878– 2014)},
    journal = {arXiv \url{https://arxiv.org/abs/1409.3820}},
    VOLUME = {},
    year = {2014},
    PAGES = {},
}

@article {lucas,
    AUTHOR = {Lucas, E.},
     TITLE = {Sur les congruences des nombres eul\'eriens et les
              coefficients diff\'erentiels des functions trigonom\'etriques
              suivant un module premier},
   JOURNAL = {Bull. Soc. Math. France},
  FJOURNAL = {Bulletin de la Soci\'et\'e{} Math\'ematique de France},
    VOLUME = {6},
      YEAR = {1878},
     PAGES = {49--54},
      ISSN = {0037-9484},
   MRCLASS = {99-04},
  MRNUMBER = {1503769},
       URL = {http://www.numdam.org/item?id=BSMF_1878__6__49_1},
}

@article {Zhao,
    AUTHOR = {Zhao, Jianqiang},
     TITLE = {Bernoulli numbers, {W}olstenholme's theorem, and {$p^5$}
              variations of {L}ucas' theorem},
   JOURNAL = {J. Number Theory},
  FJOURNAL = {Journal of Number Theory},
    VOLUME = {123},
      YEAR = {2007},
    NUMBER = {1},
     PAGES = {18--26},
      ISSN = {0022-314X,1096-1658},
   MRCLASS = {11A07 (11B68)},
  MRNUMBER = {2295427},
MRREVIEWER = {Herman\ J. J. te Riele},
       DOI = {10.1016/j.jnt.2006.05.005},
       URL = {https://doi.org/10.1016/j.jnt.2006.05.005},
}

@article {McIntosh,
    AUTHOR = {McIntosh, Richard J.},
     TITLE = {On the converse of {W}olstenholme's theorem},
   JOURNAL = {Acta Arith.},
  FJOURNAL = {Acta Arithmetica},
    VOLUME = {71},
      YEAR = {1995},
    NUMBER = {4},
     PAGES = {381--389},
      ISSN = {0065-1036,1730-6264},
   MRCLASS = {11A07 (11B68)},
  MRNUMBER = {1339137},
MRREVIEWER = {Joe\ P.\ Buhler},
       DOI = {10.4064/aa-71-4-381-389},
       URL = {https://doi.org/10.4064/aa-71-4-381-389},
}

@article {BHMT,
    AUTHOR = {Booker, Andrew R. and Hathi, Shehzad and Mossinghoff, Michael
              J. and Trudgian, Timothy S.},
     TITLE = {Wolstenholme and {V}andiver primes},
   JOURNAL = {Ramanujan J.},
  FJOURNAL = {Ramanujan Journal. An International Journal Devoted to the
              Areas of Mathematics Influenced by Ramanujan},
    VOLUME = {58},
      YEAR = {2022},
    NUMBER = {3},
     PAGES = {913--941},
      ISSN = {1382-4090,1572-9303},
   MRCLASS = {11B68 (11A41 11B65 11Y40)},
  MRNUMBER = {4437433},
MRREVIEWER = {Samuel\ S.\ Wagstaff, Jr.},
       DOI = {10.1007/s11139-021-00438-3},
       URL = {https://doi.org/10.1007/s11139-021-00438-3},
}

@article {murty,
    AUTHOR = {Murty, M. Ram and S\'eguin, Fran\c cois},
     TITLE = {Prime divisors of sparse values of cyclotomic polynomials and
              {W}ieferich primes},
   JOURNAL = {J. Number Theory},
  FJOURNAL = {Journal of Number Theory},
    VOLUME = {201},
      YEAR = {2019},
     PAGES = {1--22},
      ISSN = {0022-314X,1096-1658},
   MRCLASS = {11B39 (11A41 11D45 11N69)},
  MRNUMBER = {3958037},
MRREVIEWER = {Joshua\ Zelinsky},
       DOI = {10.1016/j.jnt.2019.02.016},
       URL = {https://doi.org/10.1016/j.jnt.2019.02.016},
}

@article {sun2000,
    AUTHOR = {Sun, Zhi-Hong},
     TITLE = {Congruences concerning {B}ernoulli numbers and {B}ernoulli
              polynomials},
   JOURNAL = {Discrete Appl. Math.},
  FJOURNAL = {Discrete Applied Mathematics. The Journal of Combinatorial
              Algorithms, Informatics and Computational Sciences},
    VOLUME = {105},
      YEAR = {2000},
    NUMBER = {1-3},
     PAGES = {193--223},
      ISSN = {0166-218X,1872-6771},
   MRCLASS = {11B68 (11A07)},
  MRNUMBER = {1780472},
MRREVIEWER = {Arnold\ M.\ Adelberg},
       DOI = {10.1016/S0166-218X(00)00184-0},
       URL = {https://doi.org/10.1016/S0166-218X(00)00184-0},
}

@article{glaisher1900,
  title={ Congruences relating to the sums of products of the first n numbers and to other sums of products},
  author={Glaisher, James Whitbread Lee},
  journal={Quart. J. Math},
  volume={31},
  pages={1-35},
  year={1900}
}

@article{glaisher1900II,
  title={On the residues of the sums of products of the first p- 1 numbers, and their powers, to modulus p2 or p3},
  author={Glaisher, James Whitbread Lee},
  journal={Quart. J. Math},
  volume={31},
  pages={321--353},
  year={1900}
}

@article{raabe,
  author={Raabe, Joseph Ludwig},
  title={Zur{\"u}ckf{\"u}hrung einiger Summen und bestimmten Integrale auf die Jacob-Bernoullische Function.},
  
  year={1851},
  JOURNAL ={De Gruyter},
}

@article{babbage,
    author = {Babbage, Charles} ,
    title = {Demonstration of a theorem relating to prime numbers},
    journal = {The Edinburgh Philosophical Journal},
    volume= {1},
    year = {1819},
    pages={46-49},
}

@incollection {Lj,
    AUTHOR = {Brun, Viggo and Stubban, J. O. and Fjeldstad, J. E. and Tambs
              Lyche, R. and Aubert, K. E. and Ljunggren, W. and Jacobsthal,
              E.},
     TITLE = {On the divisibility of the difference between two binomial
              coefficients},
 BOOKTITLE = {Den 11te {S}kandinaviske {M}atematikerkongress, {T}rondheim,
              1949},
     PAGES = {42--54},
 PUBLISHER = {Johan Grundt Tanums Forlag, Oslo},
      YEAR = {1952},
   MRCLASS = {10.0X},
  MRNUMBER = {53125},
MRREVIEWER = {D.\ H.\ Lehmer},
}

@article {Bailey,
    AUTHOR = {Bailey, D. F.},
     TITLE = {Two {$p^3$} variations of {L}ucas' theorem},
   JOURNAL = {J. Number Theory},
  FJOURNAL = {Journal of Number Theory},
    VOLUME = {35},
      YEAR = {1990},
    NUMBER = {2},
     PAGES = {208--215},
      ISSN = {0022-314X,1096-1658},
   MRCLASS = {11B39},
  MRNUMBER = {1057323},
MRREVIEWER = {A.\ Eswarathasan},
       DOI = {10.1016/0022-314X(90)90113-6},
       URL = {https://doi.org/10.1016/0022-314X(90)90113-6},
}

@incollection {Granville,
    AUTHOR = {Granville, Andrew},
     TITLE = {Arithmetic properties of binomial coefficients. {I}.
              {B}inomial coefficients modulo prime powers},
 BOOKTITLE = {Organic mathematics ({B}urnaby, {BC}, 1995)},
    SERIES = {CMS Conf. Proc.},
    VOLUME = {20},
     PAGES = {253--276},
 PUBLISHER = {Amer. Math. Soc., Providence, RI},
      YEAR = {1997},
      ISBN = {0-8218-0668-8},
   MRCLASS = {11B65 (11A07)},
  MRNUMBER = {1483922},
MRREVIEWER = {Ronald\ J.\ Evans},
}

@article {davis-webb,
    AUTHOR = {Davis, Kenneth S. and Webb, William A.},
     TITLE = {Lucas' theorem for prime powers},
   JOURNAL = {European J. Combin.},
  FJOURNAL = {European Journal of Combinatorics},
    VOLUME = {11},
      YEAR = {1990},
    NUMBER = {3},
     PAGES = {229--233},
      ISSN = {0195-6698,1095-9971},
   MRCLASS = {11B65 (05A10)},
  MRNUMBER = {1059553},
MRREVIEWER = {F.\ T.\ Howard},
       DOI = {10.1016/S0195-6698(13)80122-9},
       URL = {https://doi.org/10.1016/S0195-6698(13)80122-9},
}

@article {lehmer,
    AUTHOR = {Lehmer, Emma},
     TITLE = {On congruences involving {B}ernoulli numbers and the quotients
              of {F}ermat and {W}ilson},
   JOURNAL = {Ann. of Math. (2)},
  FJOURNAL = {Annals of Mathematics. Second Series},
    VOLUME = {39},
      YEAR = {1938},
    NUMBER = {2},
     PAGES = {350--360},
      ISSN = {0003-486X,1939-8980},
   MRCLASS = {99-04},
  MRNUMBER = {1503412},
       DOI = {10.2307/1968791},
       URL = {https://doi.org/10.2307/1968791},
}

@book {Kenneth-Rosen,
    AUTHOR = {Ireland, Kenneth and Rosen, Michael},
     TITLE = {A classical introduction to modern number theory},
    SERIES = {Graduate Texts in Mathematics},
    VOLUME = {84},
   EDITION = {Second},
 PUBLISHER = {Springer-Verlag, New York},
      YEAR = {1990},
     PAGES = {xiv+389},
      ISBN = {0-387-97329-X},
   MRCLASS = {11-01 (11-02)},
  MRNUMBER = {1070716},
MRREVIEWER = {Glenn\ Stevens},
       DOI = {10.1007/978-1-4757-2103-4},
       URL = {https://doi.org/10.1007/978-1-4757-2103-4},
}

@book {Neukirch,
    AUTHOR = {Neukirch, J\"urgen},
     TITLE = {Algebraic number theory},
    SERIES = {Grundlehren der mathematischen Wissenschaften [Fundamental
              Principles of Mathematical Sciences]},
    VOLUME = {322},
      NOTE = {Translated from the 1992 German original and with a note by
              Norbert Schappacher,
              With a foreword by G. Harder},
 PUBLISHER = {Springer-Verlag, Berlin},
      YEAR = {1999},
     PAGES = {xviii+571},
      ISBN = {3-540-65399-6},
   MRCLASS = {11Rxx (11-02 11S15 11S31 14C40)},
  MRNUMBER = {1697859},
MRREVIEWER = {Cornelius\ Greither},
       DOI = {10.1007/978-3-662-03983-0},
       URL = {https://doi.org/10.1007/978-3-662-03983-0},
}

@article {Davis-Webb1993,
    AUTHOR = {Davis, Kenneth and Webb, William},
     TITLE = {A binomial coefficient congruence modulo prime powers},
   JOURNAL = {J. Number Theory},
  FJOURNAL = {Journal of Number Theory},
    VOLUME = {43},
      YEAR = {1993},
    NUMBER = {1},
     PAGES = {20--23},
      ISSN = {0022-314X,1096-1658},
   MRCLASS = {11B65},
  MRNUMBER = {1200804},
MRREVIEWER = {Andrew\ Granville},
       DOI = {10.1006/jnth.1993.1002},
       URL = {https://doi.org/10.1006/jnth.1993.1002},
}
\end{document}